\newfont{\cyr}{wncyr10 scaled 1100}
\newfont{\cyrr}{wncyr9 scaled 1000}
\theoremstyle{plain}
\newtheorem{theorem}{Theorem}[section]
\newtheorem{lemma}[theorem]{Lemma}
\newtheorem{thm}{Theorem}[section]
\newtheorem{cor}[thm]{Corollary}
\theoremstyle{definition}
\newtheorem{definition}[theorem]{Definition}
\newtheorem{dfn}[thm]{Definition}
\theoremstyle{remark}
\newtheorem{remark}[theorem]{Remark}
\newtheorem{rmk}[thm]{Remark}
\definecolor{Indigo}{rgb}{0.2,0.1,0.7}
\definecolor{Violet}{rgb}{0.5,0.1,0.7}
\definecolor{White}{rgb}{1,1,1}
\definecolor{Green}{rgb}{0.1,0.9,0.2}
\newcommand{\mat}[4]{\left(\begin{array}{cc}#1&#2\\#3&#4\end{array}\right)}
\newcommand{\smallmat}[4]{\bigl(\begin{smallmatrix}#1&#2\\#3&#4\end{smallmatrix}\bigr)}
\newcommand{\p}{\mathfrak p}
\newcommand{\W}{\mathbb W}
\newcommand{\D}{\mathbb D}
\newcommand{\PP}{\mathbb P}
\newcommand{\pwseries}[1]{[[#1]]}
\newcommand{\arr}{{\; \longrightarrow \;}}
\newcommand{\Des}{{\operatorname{Des }}}
\newcommand{\Hom}{{\operatorname{Hom}}}
\newcommand{\ord}{{\operatorname{ord }}}
\newcommand{\cusp}{{\operatorname{cusp }}}
\newcommand{\new}{{\operatorname{new }}}
\newcommand{\Spec}{{\operatorname{Spec }}}
\newcommand{\SL}{{\operatorname{SL }}}
\newcommand{\GL}{{\operatorname{GL}}}
\newcommand{\U}{{\operatorname{U}}}
\newcommand{\Sel}{{\operatorname{Sel}}}
\newcommand{\gerV}{{\frak{V}}}
\newcommand{\calA}{{\mathcal{A}}}
\newcommand{\calE}{{\mathcal{E}}}
\newcommand{\calH}{{\mathcal{H}}}
\newcommand{\calJ}{{\mathcal{J}}}
\newcommand{\calL}{{\mathcal{L}}}
\newcommand{\calM}{{\mathcal{M}}}
\newcommand{\calO}{{\mathcal{O}}}
\newcommand{\calR}{{\mathcal{R}}}
\newcommand{\calS}{{\mathcal{S}}}
\newcommand{\calV}{{\mathcal{V}}}
\newcommand{\calW}{{\mathcal{W}}}
\newcommand{\calX}{{\mathcal{X}}}
\newcommand{\calZ}{{\mathcal{Z}}}
\def\C{\mathbb{C}}
\def\D{\mathbb{D}}
\def\G{\mathbb{G}}
\def\I{\mathbb{I}}
\def\J{\mathbb{J}}
\def\K{\mathbb{K}}
\def\L{\mathbb{L}}
\def\M{\mathbb{M}}
\def\N{\mathbb{N}}
\def\Q{\mathbb{Q}}
\def\T{\mathbb{T}}
\def\W{\mathbb{W}}
\def\Z{\mathbb{Z}}
\def\Fp{\overline{\mathbb{F}}_p}
\begin{document}

\title{$\Lambda$-adic Families of Jacobi Forms}
\author{Matteo Longo, Marc-Hubert Nicole}

\begin{abstract} 

We show that Hida's families of $p$-adic elliptic modular forms generalize to $p$-adic families of Jacobi forms. We also construct $p$-adic versions of theta lifts from elliptic modular forms to Jacobi forms. Our results extend to Jacobi forms previous works by Hida and Stevens on the related case of half-integral weight modular forms.\end{abstract}

\subjclass[2000]{}
\keywords{}
\maketitle


 \section{Introduction}
 The theory of $p$-adic families of ordinary cusp forms was initiated by Hida (\cite{Hida85}, \cite{Hida86},  \cite{Hida88}), starting with the group $\GL(2)$. Over the last 30 years, its scope extended greatly, encompassing groups associated to Shimura data of Hodge type, and beyond. Historically though, some of the first extensions of the theory were outside the realm of reductive groups, for example the metaplectic group governing half-integral weights modular forms, worked out by Hida himself in \cite{HarHalf} by d\'evissage to his original theory. Another important theme of $p$-adic interpolation has been to establish variants in families of various functorial liftings between spaces of modular forms. For half-integral weight modular forms, the $p$-adic variant of the theta lift due originally to Shintani was studied by Stevens in \cite{St}.
 In this paper, we wish to substitute, in the non-reductive examples mentioned above, half-integral weight modular forms with Jacobi forms, and verify that the corresponding objects and liftings vary equally well in ordinary families, thus extending some results of Guerzhoy \cite{GuerzhoyCrelle}. 
 
In this paper we therefore have two aims: (1) Develop a theory of $p$-adic families of ordinary Jacobi forms, following the approach of Hida \cite{HarHalf}; (2) Construct, following \cite{St}, an example of theta lifts from $p$-adic families of ordinary elliptic modular forms to $p$-adic families of ordinary Jacobi forms which interpolates the Shimura-Shintani correspondence for classical forms. 

 Before explaining our results in more details, we stress that one of the motivations to develop such a theory, especially the $p$-adic theta correspondence, lies with companion paper \cite{LN-GKZ} which investigates an analogue for $p$-adic families of the Gross-Kohnen-Zagier theorem (GKZ for short), in which Heegner points are substituted with $p$-adic families of Heegner points called Big Heegner points. It seems that a satisfactory, although still conjectural,  generalisation of the GKZ Theorem for Big Heegner points involves the $p$-adic theta correspondence that we construct in this paper. In Section \ref{final sec} we describe in a more precise way the relation between this paper and \cite{LN-GKZ}.  

In the first part of the paper, we develop concisely the general framework for $p$-adic families of Jacobi forms, relying on unpublished work \cite{CandeloriTheta} applied only in the much simpler and much better studied case of dimension one. This part can be seen as an extension to Jacobi forms of the analogous theory developed by Hida in \cite{HarHalf} for half-integral weight modular forms. The main result of this first part is Corollary 
\ref{coro 2.14}, which is an analogue of the celebrated Hida Control Theorem (\cite[Theorem II]{Hida88}) in the context of $p$-adic families of ordinary Jacobi cusp forms. Corollary \ref{coro 2.14} shows that the $\Lambda$-module of $p$-adic families of Jacobi forms specializes to vector spaces of classical Jacobi forms when it is cut out by arithmetic primes. We now state a simplified form of the Control Theorem for Jacobi forms. 
Fix an integer $N\geq 1$ and a prime number $p\nmid N$. 
Let $\Lambda=\mathcal{O}[\![1+p\Z_p]\!]$ be the Iwasawa algebra of $1+p\Z_p$ with coefficients in the valuation ring $\mathcal{O}$ of a fixed finite extension of $\Q_p$. 
We consider a normal integral domain $\mathbb{I}$ which is finite over $\Lambda$. 
Put $\mathcal{X}(\mathbb{I})=\Spec(\mathbb{I})(\C_p)$. A point $\kappa\in\mathcal{X}(\mathbb{I})$ is said to be  {arithmetic} if its restriction to $1+p\Z_p\subseteq\mathbb{I}$ is of the form $\gamma\mapsto \psi(\gamma)\gamma^{k-2}$ for a finite order character  $\psi$ of $1+p\Z_p$ 
(called the wild character of $\kappa$) and an integer $k\geq 2$ (called the weight of $\kappa$). We introduce the metaplectic cover $\tilde{\mathbb{I}}=\mathbb{I}\otimes_\Lambda\tilde{\Lambda}$, where $\tilde{\Lambda}$ is the $\Lambda$-algebra which is equal to 
$\Lambda$ as $\Lambda$-module, but the $\Lambda$-algebra structure is given by 
$(\lambda,x)\mapsto\lambda^2x$ for $\lambda\in\Lambda$ and $x\in\tilde{\Lambda}$. {We say that a point in $\tilde{\mathcal{X}}(\mathbb{I})=\Spec(\tilde{\mathbb{I}})(\C_p)$ is  {arithmetic} if it maps to an arithmetic point of $\mathcal{X}(\mathbb{I})$ via the canonical projection map 
$\tilde{\mathcal{X}}(\mathbb{I})\rightarrow {\mathcal{X}}(\mathbb{I})$. }
A  {$\mathbb{I}$-adic Jacobi form} of index $N$ is a formal power series in 
\[\mathbb{S}=\sum_{r^r\leq 4nN}c_{n,r}q^n\zeta^r\in \tilde{\mathbb{\I}} \pwseries{q,\zeta},\] where $c_{n,r}\in\tilde{\mathbb{I}}$, such that, for each arithmetic point $\kappa\in\tilde{\mathcal{X}}(\mathbb{I})$,  the specialization of $\mathbb{S}(\kappa)$ obtained by evaluating the coefficients $c_{n,r}$ at $\kappa$, is the $(q,\zeta)$-expansion of a classical Jacobi form, with coefficients in $\C$, after fixing an algebraic isomorphism $\C\simeq\C_p$ which we choose from now on. We call  {classical specialisations} those classical Jacobi forms arising as specialisations of a $\mathbb{I}$-adic form.  Restricting to those $\mathbb{I}$-adic forms whose classical specialisations are ordinary cuspidal forms, we define the submodule of {$\mathbb{I}$-adic ordinary cuspidal} forms, 
denoted $\mathbb{J}_N^\mathrm{cusp,ord}(\mathbb{I})$. Let 
$J^{\mathrm{cusp,ord}}_{k,N}(p^s,\chi)$ be the $\C$-vector space of Jacobi cuspidal ordinary forms 
of weight $k$, index $N$, level $p^s$ and character $\chi$; we refer to \S\ref{sec:2.1} for a review of the basic facts on classical Jacobi forms. The main result of the first part is the following theorem, proved under the hypothesis that $\mathbb{I}$ is primitive, meaning that classical specialisations in a dense subset of the weight space are newforms (see Def. \ref{LambdaJacobi} (4)). 

\begin{theorem} Let $\mathbb{I}$ be primitive. The $\Lambda=\mathcal{O}\pwseries{1+p\Z_p}$-module $\mathbb{J}_N^\mathrm{cusp,ord}(\mathbb{I})$ is finitely generated, and for each arithmetic point $\kappa\in\tilde{\mathcal{X}}(\mathbb{I})$ we have 
\[\mathbb{J}_N^\mathrm{cusp,ord}(\mathbb{I})/\mathfrak{p}_\kappa \mathbb{J}_N^\mathrm{ord,cusp}(\mathbb{I})\simeq 
J^{\mathrm{cusp,ord}}_{k,N}(p^s,\chi)\] 
where $\mathfrak{p}_\kappa$ is the kernel of $\kappa$, and 
the integers $k,s$ and the character $\chi$ depend on $\kappa$ only. 
\end{theorem}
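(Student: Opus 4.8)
The plan is to reduce the statement to Hida's control theorem for half-integral weight modular forms (\cite{HarHalf}) by means of a $\Lambda$-adic version of the Eichler--Zagier theta decomposition. Recall first the classical input: a Jacobi cusp form $\phi$ of weight $k$ and index $N$ has a theta decomposition $\phi(\tau,z)=\sum_{\mu\in\Z/2N\Z}h_\mu(\tau)\,\theta_{N,\mu}(\tau,z)$, where $(h_\mu)_\mu$ is a vector-valued cusp form of weight $k-\tfrac12$ for the Weil representation of the lattice $(\Z,\,x\mapsto Nx^2)$; equivalently, after Skoruppa--Zagier (and, for squarefree $N$, Kohnen), a tuple of half-integral weight cusp forms on $\Gamma_0(4Np^s)$ with an explicit nebentypus $\chi'$ determined by $\chi$ and $N$. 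On $q$-$\zeta$-expansions this says precisely that $c_{n,r}$ (for $r^2\le 4nN$) depends only on the pair $(r\bmod 2N,\ 4nN-r^2)$, and $\phi\mapsto(h_\mu)_\mu$ is a Hecke-equivariant isomorphism carrying the Jacobi operators $T_\ell,\,U_p$ to the corresponding operators on half-integral weight forms, hence preserving ordinary parts and intertwining the ordinary idempotent $e=\lim_n U_p^{n!}$. This gives a dictionary $(k,N,p^s,\chi)\leftrightarrow(k-\tfrac12,\,4Np^s,\,\chi')$ to be used below.

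Next I would promote this to a $\Lambda$-adic statement. Let $\mathbb{S}=\sum c_{n,r}q^n\zeta^r\in\tilde{\mathbb{I}}\pwseries{q,\zeta}$ be an $\mathbb{I}$-adic Jacobi form. For every arithmetic $\kappa\in\tilde{\mathcal X}(\mathbb{I})$ the specialization $\mathbb{S}(\kappa)$ is classical, so $c_{n,r}(\kappa)$ depends only on $(r\bmod 2N,\ 4nN-r^2)$; since the arithmetic points are Zariski dense in $\Spec(\tilde{\mathbb{I}})$ and $\tilde{\mathbb{I}}$ is a domain, the same relation holds identically in $\tilde{\mathbb{I}}$. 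Hence $\mathbb{S}$ is the same datum as a tuple $(\mathbb{H}_\mu)_{\mu\in\Z/2N\Z}$ of formal $q$-expansions over $\tilde{\mathbb{I}}$ specializing at each arithmetic $\kappa$ to a classical ordinary half-integral weight cusp form of weight $k-\tfrac12$, level $4Np^s$ and character $\chi'$, i.e. to an $\tilde{\mathbb{I}}$-adic ordinary cuspidal half-integral weight form in the sense of \cite{HarHalf}. The metaplectic cover $\tilde{\mathbb{I}}=\mathbb{I}\otimes_\Lambda\tilde\Lambda$ enters here precisely because interpolating the weights $k\mapsto k-\tfrac12$ amounts to extracting a square root of the weight variable, which is exactly the effect of the $\lambda\mapsto\lambda^2$ algebra structure of $\tilde\Lambda$. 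One thus obtains a $\Lambda$-linear, Hecke-equivariant isomorphism between $\mathbb{J}_N^{\mathrm{cusp,ord}}(\mathbb{I})$ and the module of $\tilde{\mathbb{I}}$-adic ordinary half-integral weight cusp forms attached to $\mathbb{I}$.

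Now I would invoke Hida's theory. By the d\'evissage of the metaplectic case to $\GL(2)$ carried out in \cite{HarHalf}, applied to the primitive family $\mathbb{I}$, this module is finitely generated over $\tilde\Lambda$, hence over $\Lambda$ (recall $\tilde\Lambda\simeq\Lambda$ as $\Lambda$-modules), and its quotient by $\mathfrak{p}_\kappa$ is the classical space of ordinary half-integral weight cusp forms of weight $k-\tfrac12$, level $4Np^s$ and character $\chi'$, where $k,s,\chi$ depend only on $\kappa$. Primitivity is what makes the control integral — one stays inside the $\mathbb{I}$-isotypic component and no inversion of $p$ is needed — and also makes the half-integral weight newform/Shimura correspondence uniform over the dense set of arithmetic weights. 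Transporting this through the isomorphism of the previous paragraph and the classical Eichler--Zagier isomorphism in weight $k$ and level $p^s$ yields
\[
\mathbb{J}_N^{\mathrm{cusp,ord}}(\mathbb{I})/\mathfrak{p}_\kappa\,\mathbb{J}_N^{\mathrm{cusp,ord}}(\mathbb{I})\;\simeq\;J^{\mathrm{cusp,ord}}_{k,N}(p^s,\chi),
\]
together with finite generation over $\Lambda$, which is the assertion.

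The main obstacle is the second paragraph: one must show that the theta-decomposition constraint, known at arithmetic points, propagates to an identity over $\tilde{\mathbb{I}}$, and that the classical isomorphisms — especially the matching of the $U_p$-operators defining ordinarity, and the behaviour of the level and nebentypus at $p$ — are sufficiently uniform in the weight to glue into a genuine $\Lambda$-adic isomorphism, and not merely a pointwise family of them. Working systematically with vector-valued forms for the Weil representation (rather than with Kohnen plus-spaces) avoids the complications of a non-squarefree index $N$; the half-integral weight input one needs, including compatibility of the ordinary idempotent with the theta decomposition, is then precisely what \cite{HarHalf} supplies, up to routine adaptation to the vector-valued setting.
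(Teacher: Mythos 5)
Your overall philosophy --- transfer the problem to half-integral weight forms via Eichler--Zagier and lean on \cite{HarHalf} --- matches the paper's, but you execute it at a different level, and the step you yourself flag as ``the main obstacle'' is precisely the one the paper's argument is engineered to avoid. You propose to upgrade the theta decomposition to a $\Lambda$-adic, Hecke-equivariant isomorphism between $\J_N^{\cusp,\ord}(\I)$ and a module of $\tilde{\I}$-adic \emph{vector-valued} half-integral weight forms, and then to quote a control theorem for the latter. Two things are missing. First, \cite{HarHalf} proves its control theorem only for \emph{scalar-valued} half-integral weight forms; the vector-valued analogue for the Weil representation is not available off the shelf, and the adaptation is not routine --- the uniform-in-weight matching of $U_p$, of the ordinary idempotent, and of level and nebentypus at $p$ is exactly the content one would have to re-prove. (Passing to scalar components, as you suggest for the density argument, does not suffice here because one must control the kernels of those projections, which is nontrivial on oldforms; this is why the paper restricts to newforms and assumes $\I$ primitive.) Second, the $\Lambda$-adic isomorphism itself is asserted rather than constructed: the propagation of the relation $c_{n,r}=c_{n',r'}$ for $4nN-r^2=4n'N-(r')^2$ and $r\equiv r'\pmod{2N}$ from arithmetic points to $\tilde{\I}$ by Zariski density is fine, but the Hecke-equivariance and the identification of the target with the \emph{full} module of $\tilde{\I}$-adic ordinary half-integral weight forms are not established.

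The paper's proof uses the half-integral weight theory only pointwise, at classical specializations, and only to bound dimensions. Theorem \ref{weakcontrol} embeds the ordinary Jacobi \emph{newforms} into ordinary half-integral weight newforms via the map $\calZ_N$ of \cite{MR} and invokes \cite[Prop.~3]{HarHalf} (resting on Waldspurger and the Shimura correspondence) to get an upper bound independent of the weight. Finite generation (Theorem \ref{thm 2.13}) then follows from Wiles's determinant trick applied directly to Fourier--Jacobi coefficients, using the density theorem and primitivity to find a single arithmetic weight where a maximal linearly independent set specializes to linearly independent newforms. Surjectivity of specialization in Corollary \ref{coro 2.14} is obtained by convolving a given classical ordinary Jacobi form with the $\Lambda$-adic Eisenstein series on the Jacobi side (Section \ref{Wilestrick}), not by transporting Hida's surjectivity through a comparison isomorphism. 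So no $\Lambda$-adic Eichler--Zagier isomorphism is ever needed. Your route, if completed, would yield a stronger structural statement, but as written it rests on an unproved key lemma; to salvage it you would essentially have to redo Hida's argument in the vector-valued setting, which is what the paper's Jacobi-side argument amounts to.
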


In the second part of the paper, 
we set up a $p$-adic theta correspondence relating ordinary $p$-adic families of 
elliptic cuspforms with ordinary $p$-adic families of Jacobi forms. Recall that the classical theta correspondence associates, under certain arithmetic conditions, a Jacobi form to an elliptic modular form; see \S \ref{sec-theta} for a utilitarian recapitulation on the theta correspondence. We develop a $p$-adic version of this theta correspondence. 
The main result of this part is Theorem \ref{MainFamilies}, which shows the existence of a $p$-adic family of Jacobi forms interpolating classical theta lifts of the classical forms in a given $p$-adic family of elliptic ordinary cuspforms. This $p$-adic version of the theta correspondence has been developed for $p$-adic half-integral weight modular forms by Stevens \cite{St} in the elliptic case, as recalled above, and the authors \cite{LN-Doc} in the quaternionic case. 
To state our main result in a more precise form, recall that for an elliptic modular form $f\in S_k(\Gamma_1(M),\chi^2)$ of weight $k$, level $\Gamma_1(M)$ for some integer $M$, and character $\chi^2$,   
we have a Shintani lift \[f\longmapsto \mathcal{S}_{(D_0,r_0)}^{(\chi)}(f)\] to the space of Jacobi forms of prescribed weight, index, character and level depending on $f$ and the choice of a fundamental discriminant $D_0$ which is a square modulo $N$, a square root $r_0$ of $D_0$ modulo $N$, and a square root $\chi$ of the character $\chi^2$; 
the precise definition of the map $\mathcal{S}_{(D_0,r_0)}^{(\chi)}$ is recalled in  \S\ref{sec-theta}. 
Fix a Hida family $f_\infty=\sum_{n\geq 1}a_nq^n\in \mathbb{I}\pwseries{q}$ of ordinary elliptic cusp forms; recall that for each arithmetic point 
$\kappa\in \mathcal{X}(\mathbb{I})$ the specialization $f_\kappa=\sum_{n\geq 1}a_n(\kappa)q^n$ is an elliptic eigenform of level $\Gamma_1(Np^s)$, character $\chi$ and weight $k$, where the integers $k,s$ and the character $\chi$ depend on $\kappa$ only, and $p^s$ is the maximum between $1$ and the $p$-power of the conductor of $\chi$. 

\begin{theorem}\label{thmB} There exists a $p$-adic family of Jacobi forms $\mathbb{S}$ such that for each 
$\tilde\kappa\in\tilde{\mathcal{X}}(\mathbb{I})$ lying over an arithmetic point $\kappa\in {\mathcal{X}}(\mathbb{I})$ of character $\chi^2$ with $\chi$ a non-trivial character modulo $p$, we have 
\[\mathbb{S}(\tilde\kappa)=\lambda(\kappa)\cdot S_{D_0,r_0}^{(\chi)}(f_\kappa)\]
where $\lambda(\kappa)\in\C_p$ is a $p$-adic period which is non-zero on an open subset of $\mathcal{X}(\mathbb{I})$. 
\end{theorem}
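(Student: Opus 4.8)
The plan is to express each Fourier coefficient of the classical Shintani lift $S_{D_0,r_0}^{(\chi)}(f_\kappa)$ as a fixed $\C$-linear combination of period integrals of $f_\kappa$ along geodesic cycles, to recognise those periods as values of the classical modular symbol of $f_\kappa$, and then to interpolate them by pairing the $\mathbb I$-adic modular symbol of the Hida family $f_\infty$ against the same, weight-independent, combinatorial data. Concretely, I would first recall from \S\ref{sec-theta} the explicit shape of the lift: writing $S_{D_0,r_0}^{(\chi)}(f_\kappa) = \sum_{r^2 \le 4nN} c_\kappa(n,r)\, q^n\zeta^r$, the coefficient $c_\kappa(n,r)$ depends only on $D := 4nN - r^2$ and on $r \bmod 2N$, and equals a finite sum — weighted by the genus character $\chi_{D_0}$ and by values of the square-root character $\chi$ — of period integrals $\int_{P} f_\kappa(z)\, Q(z,1)^{k/2-1}\, dz$ over geodesic paths $P$ in $\mathfrak H^\ast$ with endpoints in $\PP^1(\Q)$, indexed by the relevant classes of integral binary quadratic forms $Q$ of discriminant $D_0 D$. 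The decisive observation is that the paths $P$, the forms $Q$ and the character weights are the \emph{same} for all $\kappa$; only $f_\kappa$ and the exponent $k/2-1$ vary. Hence each $c_\kappa(n,r)$ is the value of the classical sign-part modular symbol $\xi_{f_\kappa}\in\mathrm{Symb}_{\Gamma_1(Np^s)}(\mathrm{Sym}^{k-2})$ on an explicit divisor of paths, paired against the degree-$(k-2)$ polynomial $Q(z,1)^{k/2-1}$.

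Next, following \cite{St} and the Greenberg--Stevens machinery underlying Hida's control theorem \cite{Hida88}, I would attach to the primitive family $f_\infty$ an $\mathbb I$-adic modular symbol $\Xi\in\mathrm{Symb}_{\Gamma_1(Np)}(\mathbb D_{\mathbb I})$, with $\mathbb D_{\mathbb I}$ the module of $\mathbb I$-adic distributions on $\Z_p$, enjoying two properties: (i) it generates a free rank-one $\mathbb I$-submodule of the ordinary part — this is where primitivity of $\mathbb I$ (Hida multiplicity one) is used; and (ii) its specialisation at an arithmetic $\kappa$ of weight $k$ satisfies $\rho_\kappa(\Xi) = \lambda(\kappa)\,\xi_{f_\kappa}$ for a $p$-adic period $\lambda(\kappa)$, and since $\Xi\ne 0$ and $\mathbb I$ is a domain, $\lambda$ is a nonzero element (in the fraction field of $\mathbb I$) that is finite and nonvanishing on a Zariski-open dense $U\subseteq\mathcal X(\mathbb I)$. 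I would then define $c_{n,r}\in\tilde{\mathbb I}$ by feeding to $\Xi$ the data of the previous paragraph: evaluate $\Xi$ on the same path divisor and integrate the resulting $\mathbb I$-adic distribution against the ``theta distribution'' $\mu_Q$ attached to $Q$, i.e. the $\mathcal O$-valued measure whose specialisation at weight $k$ is the polynomial $z\mapsto Q(z,1)^{k/2-1}$; this uses that $Q$ takes $p$-adic unit values on the support and that $t\mapsto t^{k/2-1}$ defines a character of the weight variable. Because the exponent is $k/2-1$ rather than $k-2$, the squaring $\lambda\mapsto\lambda^2$ intervenes and the $c_{n,r}$ naturally land in $\tilde{\mathbb I}=\mathbb I\otimes_\Lambda\tilde\Lambda$; by construction $c_{n,r}$ depends only on $D$ and $r\bmod 2N$, so $\mathbb S := \sum_{r^2\le 4nN} c_{n,r}q^n\zeta^r \in \tilde{\mathbb I}\pwseries{q,\zeta}$ has the symmetry of a Jacobi form of index $N$.

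It then remains to verify the two assertions. For $\tilde\kappa$ lying over an arithmetic $\kappa$ of character $\chi^2$ with $\chi$ non-trivial modulo $p$ — so that $f_\kappa$ has exact level $\Gamma_1(Np)$ and no $p$-stabilisation Euler factor intervenes, matching the tame-versus-wild bookkeeping of the control theorem — property (ii) gives, coefficient by coefficient, $c_{n,r}(\tilde\kappa) = \lambda(\kappa)\, c_\kappa(n,r)$, that is $\mathbb S(\tilde\kappa) = \lambda(\kappa)\cdot S_{D_0,r_0}^{(\chi)}(f_\kappa)$, which is $\lambda(\kappa)$ times a classical Jacobi cusp form. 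At an arbitrary arithmetic point the same computation exhibits $\mathbb S(\kappa')$ as a $\C_p$-multiple of the Shintani lift of a $p$-stabilisation of $f_{\kappa'}$, still a classical Jacobi cusp form of the prescribed weight, index, level and character; hence $\mathbb S$ is a $\mathbb I$-adic Jacobi form in the sense of Definition~\ref{LambdaJacobi}, in fact $\mathbb S\in\mathbb J_N^{\mathrm{cusp,ord}}(\mathbb I)$ via \ref{coro 2.14}, and since $\lambda$ is nonzero on $U$ the theorem follows. The main obstacle is the construction of the preceding paragraph: one must show cleanly that the finite combination of geodesic-cycle periods computing a given $c_\kappa(n,r)$ is the genuine specialisation of a single element of $\tilde{\mathbb I}$ — that the cycles, the quadratic forms and the genus-character weights can legitimately be frozen independently of $\kappa$ and paired with $\Xi$, that the theta distributions $\mu_Q$ really interpolate the required polynomials, and that the pairing outputs lie in $\tilde{\mathbb I}$ rather than merely in its fraction field. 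A secondary point is to identify $\lambda(\kappa)$ precisely enough to conclude non-vanishing on an open set, which again rests on the freeness of the rank-one $\mathbb I$-module generated by $\Xi$, hence on the primitivity of $\mathbb I$.
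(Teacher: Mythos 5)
Your proposal follows essentially the same route as the paper: the authors likewise take the Greenberg--Stevens $\Lambda$-adic modular symbol $\Phi$ with $\Phi(\kappa)=\lambda(\kappa)\cdot I^-_{f_\kappa}$, interpolate the polynomials $Q(z,1)^{k-1}$ by Stevens-type measures (their maps $j_{Q,k_0}$ of Lemma \ref{lemmastevens}, defined precisely because $Q=[a,b,c]$ with $p\nmid c$ is a $p$-adic unit on the support), and define $c_{n,r}$ as the same genus-character-weighted sum over $p$-primitive quadratic-form classes, the remaining classes dropping out of the classical Shintani sum because $\chi(c)=0$. The only step you leave implicit that the paper makes explicit is the combinatorial matching of form classes modulo $\Gamma_0(Np^{s})$ with those modulo $\Gamma_0(Np)$ (Lemma \ref{rep3}, absorbed by $\mathrm{T}_J(p)^{1-s}$ in Theorem \ref{MainFamilies}), which is vacuous in the case $s=1$ covered by this statement.
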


A more general form of Theorem \ref{thmB} is presented in Theorem \ref{MainFamilies} and Theorem \ref{MainFamiliesTrCh}
which consider generic arithmetic points $\kappa$ of non-trivial character $\chi^2$ and trivial character, respectively. These results show that theta lifts can be interpolated in families. We also discuss a result, see Theorem \ref{new forms}, where we consider similar questions for forms which arise in Hida families as ordinary $p$-stabilisations of forms of level $N$ prime to $p$. 

\subsection*{Acknowledgments} Part of this work was done during visits of 
M.-H.N. at the Mathematics Department of the University of Padova whose congenial hospitality he is grateful for, and also during a visit of M.L. in Montr\'eal supported by the grant of the CRM-Simons professorship held by M.-H.N. in 2017-2018 at the Centre de recherches math\'ematiques (C.R.M., Montr\'eal).

\section{$\Lambda$-adic families of Jacobi forms} 

In this section, we give an account of some aspects of Hida theory for families of Jacobi forms relying on the well-known close relationship between Jacobi forms and modular forms of half-integral weight. Explicit examples of $\Lambda$-adic families of Jacobi forms are provided by theta lifts of $\Lambda$-adic families of classical modular forms, and we provide them later in the next section. It should be no surprise that Hida theory extends to the setting of Jacobi forms: Wiles's convolution trick with the classical $\Lambda$-adic Eisenstein series readily shows the existence of a $\Lambda$-adic family of Jacobi forms specializing to a given $p$-ordinary Jacobi form, see Section \ref{Wilestrick} for details. 

We rely on two main tools to study $p$-adic families of Jacobi forms: 

\begin{enumerate}
\item

first, Candelori's unpublished results in the preprint \cite{CandeloriTheta} applied to classical modular curves to relate algebraically Jacobi forms and vector-valued half-integral weight modular forms.
While \cite{CandeloriTheta} treats the higher-dimensional case using the language of stacks, all we need can be formulated plainly and with completeness in terms of modular curves without relying on his more sophisticated techniques.

\item second, Hida's treatment for scalar-valued half-integral weight modular forms \cite{HarHalf}.
Recall that Hida developped in \cite{HarHalf} his eponymous theory for {scalar-valued} half-integral weight modular forms, while by Eichler-Zagier's theorem and its generalizations, Jacobi forms are more naturally related to vector-valued half-integral weight modular forms. Since the Jacobi group is not reductive, the geometric approach working well to develop Hida theory for Shimura varieties does not seem to be directly applicable (but see \cite{Kramer95} for groundwork on the arithmetic theory of Jacobi forms), hence our heavy reliance on Hida's paper \cite{HarHalf} for content and overall strategy. Note that for Jacobi forms, the usual role of the tame level is played by the index $m=N$, using the notations below.

\end{enumerate}

\subsection{Jacobi forms}\label{sec:2.1}

We review the basic definition of Jacobi forms. The Jacobi group is the semi-direct product $\SL_2(\Z) \ltimes \Z^2$ where the law is defined by: 
\[(\gamma, \underline{Z}) (\gamma', \underline{Z'}) = (\gamma \gamma', \underline{Z} \gamma' + \underline{Z'}).\] The Jacobi group acts on the $\calH \times \C$. Let $\Gamma^J := \Gamma \ltimes \Z^2$, $\Gamma$ a congruence subgroup of $\SL_2(\Z)$. Let $\underline{Z} = (\lambda,\mu) \in \Z^2$, and $\gamma = \smallmat {a}{b}{c}{d} \in \Gamma.$ For $(\gamma, \underline{Z}) \in \Gamma^J$ and $(\tau, z) \in \calH \times \C$, let 
\[(\gamma, \underline{Z}) \cdot (\tau, z) = \Big( \frac{a\tau +b}{c\tau +d}, \frac{z + \lambda \tau + \mu}{c\tau +d} \Big).\] This defines a right action of $\Gamma^J$ on $\calH \times \C$. Further, the Jacobi group acts on complex functions on $\calH \times \C$ via the slash operator $\phi_{| k,m}$ for integers $k,m \in \N$ i.e., $(\gamma, \underline{0})$ acts as \[(c\tau + d)^{-k} e \Big( \frac{-cmz^2}{c\tau +d} \Big) \phi \Big( \frac{a\tau + b}{c\tau +d}, \frac{z}{c\tau +d} \Big),\] and $(1, (\lambda,\mu))$ acts as \[e(\lambda^2 m \tau + 2 \lambda m z) \phi (\tau, z + \lambda \tau + \mu),\] where $e(-)$ represents the exponential $e^{2\pi i -}$.
\begin{dfn} Let $k,m \in \N$. A Jacobi form of weight $k$ and index $m$ for the congruence subgroup $\Gamma$ is a function:
\[ \phi: \calH \times \C \arr \C \] which satisfies the following condition:
\begin{enumerate}
\item it satisfies $\phi_{| k,m} (\gamma, \underline{Z})= \phi$ for all $(\gamma, \underline{Z}) \in \Gamma^J$.
\item it is holomorphic on $\calH \times \C$;
\item  it is holomorphic at the cusps.
\end{enumerate}

\noindent
Here, we refer to \cite[Def. p.9]{EZ} for a precise explanation of the behaviour at the cusps in terms of the Fourier expansion. Further, a Jacobi cusp form is defined as Jacobi form vanishing at all cusps in a suitable sense.  The $\C$-vector space 
of Jacobi (resp. cusp) forms of weight $k$ and index $m$ for a congruence subgroup $\Gamma$ is denoted by $J_{k,m}(\Gamma)$ (resp. $J^\mathrm{cusp}_{k,m}(\Gamma))$ by slight abuse of notation.

\end{dfn}

\subsection{The Eichler-Zagier theorem}

We describe in this section an account of 
results by \cite{CandeloriTheta} which allow to 
derive geometrically a higher level version of the classical Eichler-Zagier theorem. We state results in \cite{CandeloriTheta} in the $1$-dimensional case of modular curves; the reader is advised to keep a copy of \cite{CandeloriTheta} when reading this subsection to compare our setting with the more general results developed in \emph{loc cit.}, see also \cite{CandeloriThesis} for background and more details.
The strategy, aimed to introduce $p$-adic families of Jacobi forms, is to add level structures of increasing $p$-power levels to obtain an analogue of  \cite[Thm.4.2.1]{CandeloriTheta}. After introducing analogous level structures on the $\C$-vector space of vector-valued modular forms, and passing to global sections over $\C$, we recover a generalization of the Eichler-Zagier theorem in higher level (see also \cite{Boylan}). 
A more general version of these results could also be obtained by introducing more general level structure; however, adding $p$-power level structure is enough for our applications to $p$-adic families of Jacobi forms. As mentioned above
we provide some of the details to accomplish this algebraically in the technically much simpler 1-dimensional case, see \cite[Exa. 2.6.10, Sect. 2.8]{CandeloriTheta} for the simplifications occuring, especially the ``accidental isomorphism'' of \cite[Eq. (2.8)]{CandeloriTheta} which does not extend to higher degree. We recall a few definitions from \cite{CandeloriTheta} but we specialize them readily to modular curves, cf. \cite{Ramsey2006}. 
We shall formulate directly the results in term of compact modular curves $X_0(4N)$ and $X_1(p^s)$, obtained as compactification of the open modular curves $Y_0(4N)$ and $Y_1(p^s)$ associated to the congruence subgroups $\Gamma_0(4N)$ and $\Gamma_1(p^s)$, for an integer $s\geq 1$. 
Note that because of its emphasis on the higher dimensional set-up, \cite{CandeloriTheta} does not treat e.g., compactifications and Hecke operators, but \cite{HarHalf} and \cite{Ramsey2006} do, see \cite{DeligneRapoport}, cf. \cite[1.2.4]{MBCompositio} for details on stacky compactifications of modular curves. Relying on algebraic geometry as in \cite{CandeloriTheta}, \cite{Ramsey2006} and \cite{Kramer91} suggests the wider generality available, while recovering optimal, canonical results (also available through more elementary, classical techniques alone) once specializing to $\C$. 

Let $N \in \N$ be an integer and $p\nmid N$, $p\geq 5$ a prime number. 
The stack $\calA_{1,1}(\Theta)$, introduced in \cite[Def. 2.6.6]{CandeloriTheta}, is (isomorphic to) the classical open moduli stack $Y_0(4N)$ of pairs $(E,H)$ of elliptic curves $E$ equipped with a subgroup scheme $H$ of $E[4N]$ locally isomorphic to $\Z/4N\Z$.  
The theta group is defined as \[ \Gamma(1,2) := \left\{ \smallmat{a}{b}{c}{d} \in \SL_2(\Z) | ab,cd \equiv 0 \mod{2} \right\}.\]
Following \cite[Def. 2.6.1]{CandeloriTheta}, we define the stack $\calX_{1,1}$ as classifying elliptic curves together with a symmetric, relatively ample, normalized invertible sheaf $\calL$ of degree $1$. 
The stack $\calX_{1,1}$ maps to the moduli stack of elliptic curves, and it is smooth over $\Z[1/2]$.

According to a result of Moret-Bailly (see \cite[Rmk. 3.4.2]{CandeloriTheta}), the stack $\calX_{1,1}$ has two connected components: we therefore have a decomposition $\calX_{1,1} = \calX^+_{1,1} \sqcup \calX^-_{1,1}$. More precisely, as it is pointed out in \cite[p. 16]{CandeloriTheta}, over $\C$ we have that: \[ \calX^{+,\mathrm{an}}_{1,1} = \Gamma(1,2) \backslash \calH, \quad \calX^{-,\mathrm{an}}_{1,1} = \SL_2(\Z) \backslash \calH. \]
Here the superscript $^\mathrm{an}$ denotes the associated complex analytic space. We denote by the more customary notation $Y(\Gamma(1,2)) :=  \calX^+_{1,1} $ the so-called ``even characteristic" connected component of the stack defined over $\Z[1/2]$.

There is a natural map $\Des: Y_0(4N) \arr \calX_{1,1}$ defined in greater generality using the moduli interpretations, see \cite[Defining equation (2.9)]{CandeloriTheta}, and it factors through $\calX^{+}_{1,1}$ by \cite[Sec. 2.7]{CandeloriTheta}. Over modular curves, we can define it in the following concrete way:
\begin{dfn} (\cite[(2.9) and Lemma 2.8.6]{CandeloriTheta})  We denote by 
\[ \Des_{\C}: Y_0(4N)_{\C} \arr Y(\Gamma(1,2))_{\C} , \]
\noindent
the analytification of the descent map induced by the homomorphism between the fundamental groups:   \[ \Gamma_0(4N) \arr \Gamma(1,2), \]
 \[ \Des \mat{a}{b}{c}{d} = \mat{a}{2bN}{c/2N}{d}. \]

\end{dfn}

We now introduce the automorphic sheaves giving rise to Jacobi forms, resp. half-integral weight modular forms. 

On the Jacobi side, the basic invertible sheaf is $\calL_{2N}$, and for simplicity we describe it using complex algebraic geometry i.e., using the complex uniformization (see \cite{Kramer91} for the treatment of general level structures and \cite{Kramer95} for an approach relying on arithmetic geometry including the treatment of compactifications). Consider the elliptic modular surface given by the quotient 
\[\calS_{\Gamma_1(p^s)} := \calH \times \C / (\Gamma_1(p^s) \ltimes \Z^2),\] where $(m,n) \in \Z^2$ acts by $(\tau, z) \mapsto (\tau, z + m + n\tau)$, and $ \gamma $ acts by \[ \gamma(\tau, z) = \Big( \frac{a\tau + b}{c\tau +d}, \frac{z}{c\tau+d} \Big), \gamma = \mat{a}{b}{c}{d} \in \Gamma_1(p^s).\]
Define $\calL_{2N} := \calO_{\calS} (2N e_{\calS}) \otimes \Big( \Omega^1_{\calS/Y_1(p^s)} \Big)^{2N}, $ where $e_{\calS}$ is the identity section. Let $\pi: \calS \rightarrow Y_1(p^s)$ denote the projection morphism, and define $\calJ_{1,2N} :=  \pi_* \calL_{2N}$. Let $\calE \rightarrow Y_1(p^s)$ denote the universal elliptic curve over $Y_1(p^s)$. 

\begin{definition} Let $N\geq 1$ and $k\geq1$ be integers. A  {Jacobi form of index $N$ and weight $k$ with respect to $\Gamma_1(p^s)$} is a global section of $\calJ_{1,2N} \otimes  \omega^{\otimes k}_{\calE/Y_1(p^s)} $. \end{definition}

On half-integral weight side, the construction involves a few more ingredients. To tackle half-integral weight modular forms geometrically, the standard trick is to fix a square root of the Hodge bundle $\omega$. This is the strategy exploited by Hida in \cite{HarHalf}. Candelori's so-called metaplectic stacky formalism is a generalisation of this idea.
The canonical bundle over $\calX_{1,1}$ of weight 1/2 forms denoted $\underline{\omega}_{\Theta}^{1/2}$ is very closely connected to the square root of the Hodge bundle $\omega$ (see \cite[Cor. 3.3.3]{CandeloriTheta}), hence the notation. Its definition is as follows:
the invertible sheaf $\underline{\omega}_{\Theta}^{1/2}$ is defined on \cite[p.22, line 3]{CandeloriTheta} as the canonical square root of $\calM_{\Theta} \otimes \omega$ induced by the canonical isomorphism of \cite[Thm. 5.0.1]{CandeloriAlg}, where $\calM_{\Theta}$ is the theta multiplier bundle of \cite[Def. 3.2.2]{CandeloriTheta}. That is, the choice of the square root of $\omega$ is made compatibly with the choice of the square root of $\calM_{\Theta}$ so that their tensor product is canonical.

More concretely, the bundle $\Des^* \underline{\omega}_{\Theta}^{1/2}$ can be seen as an invertible sheaf of modular forms of half-integral weight $1/2$ on $Y_0(4N)$, prompting the following:

\begin{definition} (\cite[Def. 3.4.1]{CandeloriTheta}) Let $k\geq 1$ be an odd integer. A  {scalar-valued modular form of weight $k/2$} is a global section of $\Des^* \underline{\omega}_{\Theta}^{k/2}$. 
\end{definition}

We now introduce vector-valued modular forms on the modular curve $Y_1(p^s)$. The new ingredient is the so-called Weil bundle. Over $\C$, this is the local system corresponding to the Weil representation, and for modular curves, it is defined as follows. Let $H(2N) := \Z/2N\Z$, and $K(2N) := \Z/2N\Z \times \Z/2N\Z$. The Heisenberg group of type $2N$ is defined as 
\[G(2N) := \G_m \times K(2N)\] with group structure given by $(\lambda_1, x_1, y_1) (\lambda_2, x_2, y_2) = (\lambda_1 \lambda_2 \langle x_1, y_2 \rangle_{2N}, x_1 +x_2, y_1 + y_2 )$, where $\langle -,- \rangle$ is the standard symplectic pairing of type $2N$ defined in \cite[Eq. (2.4)]{CandeloriTheta}. 
Let $S$ be any scheme. The sheaf $\calV(2N)$ is the free $\calO_S$-module of functions $f: H(2N) \rightarrow \calO_S$, with canonical basis given by delta functions. The Schr\"odinger representation of $G(2N)$ is defined in \cite[Def. 4.1.2]{CandeloriTheta} as the module $\calV(2N)$ given by functions $f: H(2N) \rightarrow \calO_S$, with $G(2N)$-action given by:
\[ \rho(\lambda, x,y) f(y') := \lambda \langle x,y' \rangle f(y'+y). \]
The sheaf $\calV(2N)$ is the Schr\"odinger representation of weight one and rank $2N$. 
We can now define the Weil bundle $\calW_{1,2N}$ as the locally free sheaf $\calV(2N)^{\vee} \otimes \Des^* \calM_{\Theta}^{-1/2} \otimes \omega^{-1/2}, $ see \cite[Eq. (4.3)]{CandeloriTheta}. The key point of \cite[Sec. 4.3]{CandeloriTheta} is that the Weil bundle is defined over the metaplectic stack (even though neither $\calV(2N)^{\vee}$ nor $\Des^* \calM_{\Theta}^{-1/2}$ is), i.e., it is defined over $Y_1(p^s)$ after fixing a square root of the Hodge bundle $\omega$ over $Y_1(p^s)$. 

\begin{definition} Let $k\geq 1$ be an odd integer. A  {$\calW_{1,2N}$-vector valued modular form of weight $k/2$}
is a global section of the vector bundle $\calW_{1,2N} \otimes \omega^{k+1/2}$. \end{definition}

\begin{remark} The above definition of vector-valued modular forms of weight $k/2$ corresponds to that of \cite[Def. 4.3.3]{CandeloriTheta}. \end{remark}
 
Summing up, we get the following comparison isomorphism cf. \cite[Thm. 4.2.1, Equ. (4.3)]{CandeloriTheta}:

\begin{thm} 
There is a canonical isomorphism
\[ \calJ_{1, 2N} \overset{\cong}{\arr}  \calW_{1,2N} ,\]

\noindent 
as locally free sheaves of rank $2N$ over $Y_1(p^s)_{\C}$.
\end{thm}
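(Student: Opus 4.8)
The plan is to deduce the statement from the comparison isomorphism of \cite[Thm.~4.2.1, Eq.~(4.3)]{CandeloriTheta}, specialized to the one-dimensional situation of modular curves and with the extra $\Gamma_1(p^s)$-level structure added; since passing from level structures ``$\Theta$'' to level $\Gamma_1(p^s)$ amounts to an étale base change, the content is entirely fiberwise over the elliptic curve, and the argument is the sheaf-theoretic form of the theta decomposition underlying the classical Eichler--Zagier theorem \cite{EZ}. Concretely the proof splits into three stages: (i) identify $\calJ_{1,2N}=\pi_*\calL_{2N}$ as a locally free sheaf of rank $2N$ carrying a Heisenberg action on its fibers; (ii) rigidify it, by the algebraic Stone--von Neumann theorem, as a line-bundle twist of the Schrödinger representation $\calV(2N)^\vee$; (iii) match that twist with $\Des^*\calM_\Theta^{-1/2}\otimes\omega^{-1/2}$.

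For stage (i), write $\omega=e_\calS^*\Omega^1_{\calS/Y_1(p^s)}$ for the Hodge bundle, so that $\Omega^1_{\calS/Y_1(p^s)}=\pi^*\omega$ and, by the projection formula, $\calL_{2N}=\calO_\calS(2N e_\calS)\otimes\pi^*\omega^{\otimes 2N}$, whence $\calJ_{1,2N}=\pi_*\calO_\calS(2N e_\calS)\otimes\omega^{\otimes 2N}$. On each geometric fiber $E$ the line bundle $\calO_E(2N\cdot 0)$ is symmetric and ample of degree $2N$; Riemann--Roch gives $H^1(E,\calO_E(2N\cdot 0))=0$ and $h^0=2N$, so cohomology and base change shows that $\pi_*\calO_\calS(2N e_\calS)$, and therefore $\calJ_{1,2N}$, is locally free of rank $2N$ over $Y_1(p^s)_\C$. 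On the other side, $\calW_{1,2N}=\calV(2N)^\vee\otimes\Des^*\calM_\Theta^{-1/2}\otimes\omega^{-1/2}$ is locally free of rank $2N=\#H(2N)$ by construction.

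For stages (ii) and (iii), recall that $\calO_E(2N\cdot 0)$ carries a canonical action of its Mumford theta group, a central $\G_m$-extension of $E[2N]$ whose commutator pairing is the symplectic pairing of type $2N$ on $E[2N]\cong K(2N)$; after the choice of a normalized symmetric theta structure this theta group is identified with the Heisenberg group $G(2N)$, and by the algebraic Stone--von Neumann theorem (irreducibility of theta-group representations, cf.\ Mumford and \cite[Thm.~5.0.1]{CandeloriAlg}) the space $H^0(E,\calO_E(2N\cdot 0))$ is isomorphic, as a $G(2N)$-module and uniquely up to a scalar, to the dual Schrödinger representation $\calV(2N)^\vee$. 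Running this in families over $Y_1(p^s)_\C$ — which is legitimate precisely because adding $p$-power level structure makes available a square root of $\omega$, equivalently a metaplectic/theta structure, as recorded in \cite[Sect.~2.8]{CandeloriTheta} — yields an isomorphism $\calJ_{1,2N}\cong\calV(2N)^\vee\otimes\calN$, where $\calN$ is a line bundle recording the remaining scalar ambiguity together with the $\omega$-power from stage (i). The construction of the theta multiplier bundle $\calM_\Theta$ (\cite[Def.~3.2.2]{CandeloriTheta}) and of its canonical square root relative to the chosen square root of $\omega$ (via \cite[Cor.~3.3.3]{CandeloriTheta} and \cite[Thm.~5.0.1]{CandeloriAlg}) is designed exactly so that $\calN=\Des^*\calM_\Theta^{-1/2}\otimes\omega^{-1/2}$, giving the asserted canonical isomorphism $\calJ_{1,2N}\cong\calW_{1,2N}$.

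I expect the main obstacle to be stage (iii): verifying that the Stone--von Neumann scalar ambiguity globalizes to exactly the bundle $\Des^*\calM_\Theta^{-1/2}\otimes\omega^{-1/2}$, and not to another square root, which forces one to keep track of Candelori's normalizations — the normalized theta structure, the ``accidental isomorphism'' \cite[Eq.~(2.8)]{CandeloriTheta} special to dimension one, and the compatibility between the square roots of $\omega$ and of $\calM_\Theta$. As an independent check over $\C$, one may instead build the isomorphism analytically from the theta decomposition: a local section of $\calW_{1,2N}$ is a tuple $(h_a)_{a\bmod 2N}$ transforming under the Weil representation, and the assignment $(h_a)_a\mapsto\sum_{a\bmod 2N}h_a(\tau)\,\vartheta_{N,a}(\tau,z)$, with $\vartheta_{N,a}(\tau,z)=\sum_{n\equiv a\,(2N)}e(n^2\tau/4N+nz)$, identifies it with a local section of $\calJ_{1,2N}$, the Weil-representation transformation law being equivalent to $\Gamma_1(p^s)^J$-invariance; this is the classical Eichler--Zagier decomposition \cite{EZ}, and uniqueness of the Heisenberg-equivariant comparison map shows that it agrees with the algebraic isomorphism above.
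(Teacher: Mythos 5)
Your proposal is correct and follows essentially the same route as the paper: both reduce the statement to Candelori's comparison isomorphism \cite[Thm.~4.2.1, Eq.~(4.3)]{CandeloriTheta}, together with the observation that both sheaves become defined over $Y_1(p^s)$ itself once a square root of the Hodge bundle $\omega$ is fixed, with canonicity supplied by \cite[Thm.~5.0.1]{CandeloriAlg}. The paper's proof is essentially just this citation, whereas you additionally unpack the Mumford theta-group/Stone--von Neumann mechanism behind the cited result and add the classical analytic theta-decomposition as a cross-check; this elaborates, rather than diverges from, the paper's argument.
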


\begin{proof}
The key point in the above construction is that both sheaves are generally defined over the metaplectic stack over the modular elliptic curve $Y_1(p^s)$, that is, over $Y_1(p^s)$ itself once a square root of the Hodge bundle $\omega_{\calE/Y_1(p^s)}$ is fixed. The isomorphism \cite[(4.3)]{CandeloriTheta} states that
\[ \calV(2n) \otimes \Des^* \calM_{\Theta}^{-1/2} \cong \calJ_{1,2N} \otimes \omega^{1/2},\]
\noindent
and it is canonical thanks to \cite[Thm 5.0.1]{CandeloriAlg}.
\end{proof}

\begin{rmk}
\cite[Thm. 4.2.1, (4.3)]{CandeloriTheta} refine an old result of Mumford, see \cite[Sect. 4.4]{CandeloriTheta} and \cite{MBCompositio} for details. We work over $\C$ as the most precise result is established up to $\pm 1$ only in that setting in \cite[Thm 4.4.2]{CandeloriTheta}, see \cite[Rmk 5.0.3]{CandeloriAlg} for a description of the ambiguity in general.
\end{rmk}

After  tensoring with powers of the Hodge bundle and extending to the compactification $X_1(p^s)$ (cf. \cite[Sect. 2.5]{MBCompositio} for the computation at the cusps with Tate curves), we obtain the following slight extension to level $\Gamma_1(p^s)$ of the classical Eichler-Zagier isomorphism, see \cite[Thm.5.1]{EZ}, cf. \cite[Cor.4.5.1]{CandeloriTheta} and see \cite[\S 3, Thm. 3.5]{Boylan} for further generalization over $\C$:

\begin{cor} \label{EZgen}
There is a canonical isomorphism between the space $J_{k,N}(\Gamma_1(p^s))$ of Jacobi forms of index $N$, weight $k$ and level $\Gamma_1(p^s)$ and the space of $\calW_{1,2N}^{\vee}$-vector-valued modular forms of weight $k - 1/2$, denoted $\gerV_{k-1/2, 2N}(\Gamma_1(p^s))$ .
\end{cor}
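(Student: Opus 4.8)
The plan to prove Corollary~\ref{EZgen} is to deduce it from the sheaf isomorphism $\calJ_{1,2N}\overset{\cong}{\arr}\calW_{1,2N}$ of the preceding theorem by twisting with a power of the Hodge bundle, passing to global sections, and then extending the identification across the cusps of the compactified curve. First, recall that by definition a Jacobi form of index $N$, weight $k$ and level $\Gamma_1(p^s)$ is a global section over $X_1(p^s)_\C$ of the natural extension of $\calJ_{1,2N}\otimes\omega^{\otimes k}_{\calE/Y_1(p^s)}$, subject to holomorphy at the cusps. Twisting the isomorphism of the preceding theorem by $\omega^{\otimes k}_{\calE/Y_1(p^s)}$ yields a canonical isomorphism of locally free sheaves of rank $2N$
\[ \calJ_{1,2N}\otimes\omega^{\otimes k}_{\calE/Y_1(p^s)}\ \overset{\cong}{\arr}\ \calW_{1,2N}\otimes\omega^{\otimes k}_{\calE/Y_1(p^s)} \]
over $Y_1(p^s)_\C$. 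Unwinding the definition $\calW_{1,2N}=\calV(2N)^{\vee}\otimes\Des^*\calM_\Theta^{-1/2}\otimes\omega^{-1/2}$, the right-hand side is exactly the automorphic bundle governing vector-valued modular forms of weight $k-1/2$ valued in the (dual of the) Weil bundle: the intrinsic factor $\omega^{-1/2}$ accounts for the shift $k\mapsto k-1/2$, and the dual $\calW_{1,2N}^{\vee}$ rather than $\calW_{1,2N}$ appears because this isomorphism is the geometric form of the classical theta decomposition $\phi=\sum_{\mu\bmod 2N}h_\mu\,\theta_{N,\mu}$, in which the coefficient functions $h_\mu$ transform by the contragredient of the representation acting on the theta series $\theta_{N,\mu}$ that span the fibres of $\calJ_{1,2N}=\pi_*\calL_{2N}$.

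Taking global sections over the open curve $Y_1(p^s)_\C$ then already produces a canonical linear isomorphism between index-$N$ weight-$k$ Jacobi forms and $\calW_{1,2N}^{\vee}$-vector-valued modular forms of weight $k-1/2$, both without any boundary conditions. The remaining, and genuinely substantive, step is to extend both sheaves across the cusps of $X_1(p^s)_\C$ and to check that the isomorphism stays regular there and intertwines the two holomorphy-at-the-cusps conditions. Concretely, in a formal neighbourhood of each cusp one writes down the elliptic modular surface $\calS$, the line bundle $\calL_{2N}=\calO_\calS(2Ne_\calS)\otimes(\Omega^1_{\calS/Y_1(p^s)})^{2N}$ and the theta multiplier bundle $\calM_\Theta$ over the Tate curve, as in \cite[Sect.~2.5]{MBCompositio} (cf.~\cite{Kramer95}, \cite{Ramsey2006}), and recognizes that the isomorphism of the preceding theorem specializes at the boundary to the Eichler--Zagier theta decomposition. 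It follows that a section $\phi$ of $\calJ_{1,2N}\otimes\omega^{\otimes k}_{\calE/Y_1(p^s)}$ is holomorphic at the cusps in the sense of \cite[Def.~p.9]{EZ} if and only if every component $h_\mu$ of the corresponding vector-valued object extends holomorphically across the boundary, which gives the desired canonical isomorphism $J_{k,N}(\Gamma_1(p^s))\cong\gerV_{k-1/2,2N}(\Gamma_1(p^s))$.

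The main obstacle is precisely this last step: \cite{CandeloriTheta} is developed over open modular curves and does not treat compactifications or $q$-expansions, so one must supply independently the Tate-curve models of $\calL_{2N}$ and of $\underline{\omega}_\Theta^{1/2}$ and verify that the comparison map is regular at the boundary and compatible with the $q$-expansion principle on both the Jacobi and the half-integral-weight sides; the relevant computations are those of \cite{Kramer95} and \cite{Ramsey2006}. The hypotheses $p\geq 5$ and $p\nmid N$ enter here to ensure that the level-$\Gamma_1(p^s)$ structure together with the theta and Heisenberg level structures are rigid enough for these boundary models, and hence the resulting isomorphism, to be canonical and independent of choices. A minor additional point is to check that the automorphy factors and the symplectic pairing of \cite[Eq.~(2.4)]{CandeloriTheta} are normalized so that the target is genuinely $\calW_{1,2N}^{\vee}$ and the weight really drops by $1/2$, so that the statement specializes to the classical Eichler--Zagier isomorphism of \cite[Thm.~5.1]{EZ} when $s=0$.
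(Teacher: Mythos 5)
Your proposal is correct and follows essentially the same route as the paper: tensor the comparison isomorphism $\calV(2N)\otimes\Des^*\calM_{\Theta}^{-1/2}\cong\calJ_{1,2N}\otimes\omega^{1/2}$ with $\omega^{\otimes k}_{\calE/Y_1(p^s)}$, extend the sheaves and the isomorphism across the cusps of $X_1(p^s)$, and take global sections. You simply spell out in more detail the boundary extension (which the paper delegates to the Tate-curve computations of \cite[Sect.~2.5]{MBCompositio}) and the bookkeeping explaining why the dual bundle $\calW_{1,2N}^{\vee}$ and the half-integral shift appear.
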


\begin{proof}
We tensor the isomorphism \[ \calV(2N) \otimes \Des^* \calM_{\Theta}^{-1/2} \cong \calJ_{1,2N} \otimes \omega^{1/2}\] 

\noindent with $\omega^{\otimes k}_{{\calE}/Y_1(p^s)}$ on both sides and obtain the canonical isomorphism:
\[ \calV(2N) \otimes \Des^* \calM_{\Theta}^{-1/2} \otimes \omega^{\otimes k-1/2}_{{\calE}/Y_1(p^s)} \cong \calJ_{1,2N} \otimes \omega^{\otimes k}_{{\calE}/Y_1(p^s)}.\]
\noindent
The sheaves and the isomorphism extend to the compactification $X_1(p^s)$. 
\noindent
Taking global sections yields the canonical isomorphism between the two spaces of modular forms.
\end{proof}

\noindent
The vector-valued modular forms in $\gerV_{k-1/2, 2N}(\Gamma_1(p^s))$ above generalize the $2N$-tuples of modular forms as in \cite[p. 59]{EZ} with respect to $\SL_2(\Z)$.

\subsection{Density of classical Jacobi forms}

As the theory of Jacobi forms is not fully geometrized in comparison with the theory of classical modular forms, we face similar difficulties as for half-integral weight modular forms where the underlying non-reductive group is the metaplectic group. Our strategy to prove the density theorem for classical Jacobi forms within $p$-adic Jacobi forms is to use the natural isomorphism \`a la Eichler-Zagier between the space of Jacobi forms and the space of vector-valued half-integral weight modular forms. More precisely, we show that the completion of the space of $p$-power level classical Jacobi forms does not depend on the weight, and therefore any such classical subspace is dense in the space of $p$-adic Jacobi forms, see Thm \ref{densitythm} for the precise statement.

We set up the necessary notation for the density theorem. Let\[J_{k,N}(A) := \bigcup_s J_{k,N}(\Gamma_1(p^s);A),\qquad 
J^\mathrm{cusp}_{k,N}(A) := \bigcup_s J^\mathrm{cusp}_{k,N}(\Gamma_1(p^s);A),\] for $A$ any subalgebra of $\C$, and where $J_{k,N}(\Gamma_1(p^s);A)$ denotes the space of Jacobi forms of weight $k$, index $N$ and level $\Gamma_1(p^s)$ with Fourier-Jacobi coefficients in $A$.

Let $W(\Fp)$ be the ring of Witt vectors with residue field $\Fp$, and $K$ the quotient field of $W(\Fp)$.
We fix once and for all a complex (resp. $p$-adic) embedding of $\bar{\Q}$ into 
$\C$ (resp. $\bar\Q_p$). We may thus take $A  := W(\Fp) \cap \overline{\Q}$ and $J_{k,N}( W(\Fp)) = J_{k,N}(A) \otimes_{A} W(\Fp)$,   $J^\mathrm{cusp}_{k,N}( W(\Fp)) = J^\mathrm{cusp}_{k,N}(A) \otimes_{A} W(\Fp)$. We write $\widehat{J}_{k,N}(W(\Fp))$ for the $p$-adic completion of $J_{k,N}(W(\Fp))$, and similarly for cusp forms. The following theorem proves that the $p$-adic completion is independent of the weight $k$.

\begin{thm}[Density theorem] \label{densitythm} For $k \geq 2$, we have an isomorphism preserving the Fourier-Jacobi coefficients:
\[ \widehat{J}^\mathrm{cusp}_{k,N}(W(\Fp))  \cong \widehat{J}^\mathrm{cusp}_{k+1,N}(W(\Fp)). \]
\end{thm}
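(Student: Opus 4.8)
The plan is to reduce the density statement for Jacobi forms to the analogous density statement for (vector-valued) half-integral weight modular forms, which in turn should follow from Hida's work in \cite{HarHalf} combined with the multiplication-by-a-weight-one-form trick. Concretely, by Corollary \ref{EZgen} we have, for each $s$, a canonical isomorphism $J_{k,N}(\Gamma_1(p^s))\cong \gerV_{k-1/2,2N}(\Gamma_1(p^s))$ between Jacobi forms of weight $k$, index $N$ and level $\Gamma_1(p^s)$ and $\calW_{1,2N}^\vee$-vector-valued modular forms of weight $k-1/2$. The first step is to observe that this isomorphism, being induced by an isomorphism of invertible sheaves defined over (a cover of) the modular curve that extends to the compactification $X_1(p^s)$, preserves $q$-expansions up to a fixed normalisation, and in particular descends to an isomorphism over $A=W(\Fp)\cap\overline{\Q}$ of the integral structures defined by Fourier--Jacobi coefficients, hence after $p$-adic completion gives $\widehat J^{\mathrm{cusp}}_{k,N}(W(\Fp))\cong \widehat{\gerV}^{\mathrm{cusp}}_{k-1/2,2N}(W(\Fp))$ and likewise in weight $k+1$. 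So it suffices to prove the weight-independence of the $p$-adic completion on the vector-valued half-integral weight side.

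The second step is to produce the isomorphism $\widehat{\gerV}^{\mathrm{cusp}}_{k-1/2,2N}(W(\Fp))\cong \widehat{\gerV}^{\mathrm{cusp}}_{k+1/2,2N}(W(\Fp))$. The idea is the standard one: fix a weight-one modular form $E$ of level prime to $p$ (for instance a suitable Eisenstein series, or a product of theta series) which is congruent to $1$ modulo $p$, so that multiplication by $E$ defines a map $\gerV_{k-1/2,2N}(\Gamma_1(p^s))\to \gerV_{k+1/2,2N}(\Gamma_1(p^{s'}))$ for $s'$ possibly larger than $s$, compatible with Fourier--Jacobi coefficients, and inducing the identity modulo $p$. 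Passing to the union over $s$ and completing $p$-adically, this map becomes an isomorphism by the usual successive-approximation argument (a $\Z_p$-linear map between $p$-adically complete, $p$-torsion-free modules which is an isomorphism mod $p$ is an isomorphism), once one knows that $E$ and $E^{-1}$ genuinely define maps in the relevant direction at finite level after allowing the $p$-level to grow. This is exactly the mechanism Hida uses in \cite{HarHalf} for scalar-valued half-integral weight forms; here one only needs to check it is compatible with the extra $\calW_{1,2N}$-coefficients, which it is because multiplication by $E$ acts on the $\omega$-power factor only and leaves the Weil bundle factor untouched.

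The third step, which is really a bookkeeping check rather than a new idea, is to make sure cusp forms go to cusp forms under all the above maps: the Eichler--Zagier isomorphism matches the cuspidality conditions (this is part of its content, cf.\ \cite{EZ}), and multiplication by a holomorphic form of level prime to $p$ cannot destroy vanishing at the cusps; conversely division by $E$, performed $p$-adically on the completion, preserves the cusp condition because it is detected on $q$-expansions. One should also record that the integral structures $J^{\mathrm{cusp}}_{k,N}(A)\otimes_A W(\Fp)$ used in the statement agree with the global-sections-over-$W(\Fp)$ of the integral models of the sheaves, so that ``$p$-adic completion'' is unambiguous; this uses the $q$-expansion principle for the relevant sheaves on $X_1(p^s)$, which is available via \cite{HarHalf} and \cite{Ramsey2006}.

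The main obstacle I anticipate is not conceptual but structural: verifying that Hida's arguments in \cite{HarHalf}, written for scalar-valued half-integral weight modular forms, carry over verbatim to the $\calW_{1,2N}$-vector-valued setting, i.e.\ that tensoring everything with the fixed finite-rank bundle $\calV(2N)^\vee\otimes\Des^*\calM_\Theta^{-1/2}$ (which is independent of the weight $k$ and defined over the relevant $p$-adic integral model) does not interfere with the weight-shifting map or with the $q$-expansion/density formalism. Since that bundle is locally free of finite rank and carries no $k$-dependence, the successive-approximation argument should go through unchanged, but spelling out the compatibility at the cusps and the integrality of the comparison isomorphism of Corollary \ref{EZgen} over $W(\Fp)$ is where the real work lies.
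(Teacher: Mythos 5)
Your proposal follows essentially the same route as the paper: both steps coincide, namely transferring the statement to $\calW_{1,2N}^{\vee}$-vector-valued half-integral weight modular forms via Corollary~\ref{EZgen} and then invoking Hida's weight-independence of the $p$-adic completion from \cite{HarHalf}. The only cosmetic difference is that the paper deduces the vector-valued case from the scalar-valued one by projecting onto the $2N$ scalar components, whereas you re-run the multiplication-by-$E$ weight-shift directly on the vector-valued bundle (noting it only touches the $\omega$-factor); these amount to the same verification.
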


\begin{proof}
The first step is to use Corollary \ref{EZgen} to transfer the problem to vector-valued modular forms. Note that the construction of vector-valued modular forms or taking the $p$-adic completion are operations that commute with each other, since the corresponding vector spaces have finite dimension. Using \cite[Thm.1, p.145]{HarHalf}, we already know the result for scalar-valued half-integral weight modular forms, and this implies that the space of classical vector-valued modular forms is dense in the space of $p$-adic vector-valued modular forms by using the projections to the scalar-valued components. The result for Jacobi forms follows.
\end{proof}

\subsection{Ordinary forms} 
Let $N\geq 1$ be an integer, $p\nmid N$, $p> 5$ a prime number, $s\geq 1$ an integer 
and $ \chi:(\Z/p^s\Z)^\times\rightarrow\bar\Q_p^\times $ a Dirichlet character. 
Denote $\mathrm{U}_J(p)$ the Hecke operator at $p$ acting on the space of Jacobi forms $J^\mathrm{cusp}_{k,N}(p^s,\chi)$ of index $N$, character $\chi$, with respect to $\Gamma_0(p^s)$, see \cite{MRVSK} and \cite{Ibu} for details. By \cite[\S3, Eq. (5), p.165]{MRVSK}, if the $(n,r)$-th Fourier-Jacobi coefficient of 
a $\mathrm{U}_J(p)$-eigenform $F$ is $c(n,r)$, then the $(n,r)$-th Fourier-Jacobi coefficient of $F|\U_J(p)$ is $c(p^2n, p r)$.

\begin{dfn}
We say that a Jacobi eigenform $F\in J^\mathrm{cusp}_{k,N}(p^s,\chi)$ is  {$p$-ordinary} if its $\U_J(p)$-eigenvalue is a $p$-adic unit.
\end{dfn}

We denote $J^\mathrm{cusp,ord}_{k,N}(p^s,\chi)$ the $\C$-vector space of ordinary Jacobi cuspforms. 
We start with recalling the notion of newforms in the Jacobi case introduced in \cite{MR}.  
Denote $S_{2k-2}^\mathrm{new}(\Gamma_0(Np^s),\chi^2)$ the space of newforms of 
weight $2k-2$, level $\Gamma_0(Np^s)$ and character $\chi^2$. As in \cite[Sect. 5.1]{MR}, let $f_1,\dots,f_t$ be an orthonormal basis of $S_{2k-2}^\mathrm{new}(\Gamma_0(Np^s),\chi^2)$ 
of normalized Hecke eigenforms, and write $f_i=\sum_{n\geq 1}a_n(f_i)q^n$.  
Let $J^\mathrm{cusp,\new}_{k,N}(p^s,\chi,f_i)$ be the subspace 
of $J^\mathrm{cusp}_{k,N}(p^s,\chi)$ consisting of forms $F$ such that 
$F|\mathrm{T}_J(\ell) = a_\ell(f_i)\cdot F$ for all primes $\ell\nmid Np$. Define 
\[J^\mathrm{cusp,\new}_{k,N}(p^s,\chi)=\bigoplus_{i=1}^tJ^\mathrm{cusp,\new}_{k,N}(p^s,\chi,f_i).\] 

\begin{thm}[Weak control theorem for Jacobi newforms] \label{weakcontrol} Let $k \geq 2$.
The number of linearly independent $p$-ordinary Jacobi cuspidal newforms in $J_{k,N}^{\mathrm{cusp},\new}(p^s,\chi)$ is bounded above independently of the weight $k$. 
\end{thm}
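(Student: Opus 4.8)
The plan is to reduce the weak control theorem for Jacobi newforms to Hida's control theorem for scalar-valued half-integral weight modular forms, using the Eichler--Zagier isomorphism of Corollary \ref{EZgen} together with the decomposition of the space of Jacobi cusp forms along the Hecke eigensystems of the associated space of classical integral-weight newforms $S_{2k-2}^{\mathrm{new}}(\Gamma_0(Np^s),\chi^2)$. First I would combine Corollary \ref{EZgen}, which identifies $J_{k,N}(\Gamma_1(p^s))$ with the space $\gerV_{k-1/2,2N}(\Gamma_1(p^s))$ of $\calW_{1,2N}^\vee$-vector-valued modular forms of weight $k-1/2$, with the observation already used in the proof of Theorem \ref{densitythm}: a vector-valued form decomposes (via the $2N$ projections to scalar components) into a tuple of scalar-valued half-integral weight forms of weight $k-1/2$ on modular curves of $p$-power level. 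The Hecke operators $\mathrm{T}_J(\ell)$ and $\mathrm{U}_J(p)$ on the Jacobi side correspond under this dictionary to the usual Hecke and $U_p$ operators on the half-integral weight side (compatibly with the Shimura lift to weight $2k-2$), so the notion of $p$-ordinarity is preserved, and the eigenform condition $F|\mathrm{T}_J(\ell)=a_\ell(f_i)F$ translates into membership in a fixed Hecke eigenspace (the $f_i$-isotypic piece under the Shimura correspondence) of the scalar-valued space.

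Next I would invoke Hida's theorem for scalar-valued half-integral weight modular forms: \cite[Thm.~1, p.~145]{HarHalf} and its refinements bound the dimension of the space of $p$-ordinary half-integral weight cusp forms of weight $k-1/2$, level a power of $p$, independently of $k$. Since $2N$ is fixed, the Jacobi space $J_{k,N}^{\mathrm{cusp},\new}(p^s,\chi)$ embeds into a direct sum of $2N$ copies of such scalar-valued spaces, so its $p$-ordinary part has dimension bounded by $2N$ times Hida's bound, which does not depend on $k$. The key point is that $s$ itself is determined by $\kappa$ (and only ranges over the values forced by the conductor of $\chi$), so once $N$ and the relevant finite set of auxiliary data are fixed, the bound is genuinely uniform in the weight. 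The newform hypothesis is what guarantees that the eigensystems appearing are controlled by the finite orthonormal basis $f_1,\dots,f_t$ of $S_{2k-2}^{\mathrm{new}}$, rather than by the full (potentially larger) oldform space; this is why the statement is phrased for the newform subspace.

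The main obstacle, and the step requiring the most care, is the precise compatibility between the Jacobi Hecke operators $\mathrm{T}_J(\ell)$, $\mathrm{U}_J(p)$ and the half-integral weight Hecke operators under the Eichler--Zagier isomorphism at $p$-power level. The classical Eichler--Zagier dictionary (over $\SL_2(\Z)$) matches these operators with the integral-weight Hecke operators on weight $2k-2$ forms, but at level $\Gamma_1(p^s)$ one must check that Corollary \ref{EZgen}'s isomorphism is Hecke-equivariant away from $p$ and intertwines $\mathrm{U}_J(p)$ with $U_p$ on the half-integral weight side; here the formula recalled from \cite[\S3, Eq.~(5)]{MRVSK} for the action of $\mathrm{U}_J(p)$ on Fourier--Jacobi coefficients, $c(n,r)\mapsto c(p^2n,pr)$, is exactly the shape needed to match the half-integral weight $U_{p^2}$-type operator, so this should go through by comparing $q$-expansions. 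A secondary technical point is ensuring that the Weil-representation component structure is stable under the prime-to-$p$ Hecke action so that the projection to scalar components is Hecke-equivariant; this follows from the fact that the Weil bundle $\calW_{1,2N}$ is locally constant and the Hecke correspondences act trivially on it away from the level, but it deserves an explicit remark. Once these compatibilities are in place, the bound is immediate from Hida's theorem, and I would expect the whole argument to occupy at most a page.
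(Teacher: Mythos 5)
Your overall strategy --- transfer to half-integral weight via Eichler--Zagier and then quote a weight-independent bound on the Hida-ordinary part there --- is the right idea, but the specific reduction has a genuine gap at its key step. You invoke \cite[Thm.~1, p.~145]{HarHalf} as bounding the dimension of the \emph{full} space of $p$-ordinary scalar-valued half-integral weight cusp forms of weight $k-1/2$ and $p$-power level independently of $k$. That theorem is the density/completion statement (the one used in the proof of Theorem \ref{densitythm}); it says nothing about dimensions of ordinary subspaces. No weight-independent bound for the full ordinary half-integral weight space is available to quote: the result Hida actually proves, and the one the paper uses, is \cite[Prop.~3]{HarHalf}, which bounds the number of $p$-ordinary half-integral weight \emph{newforms}, and its proof goes through Waldspurger's results and the Shimura correspondence --- precisely the place where the newform hypothesis is indispensable. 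Your proposal instead treats the newform condition as a bookkeeping device (``controlled by the finite basis $f_1,\dots,f_t$''), so the hypothesis never actually enters your argument in a load-bearing way; if your reduction worked as written it would bound the full ordinary cuspidal space, which would make the primitivity assumption in Theorem \ref{thm 2.13} and Corollary \ref{coro 2.14} unnecessary. The remark following Corollary \ref{coro 2.14} explains why this cannot be expected: the Eichler--Zagier map fails to be injective on oldforms, and its kernel there is not controlled.

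The paper's proof is shorter and avoids both of your technical worries by using a different comparison map: the generalized Eichler--Zagier map $\calZ_N$ of Manickam--Ramakrishnan, which by \cite[Thm.~5.2]{MR} is \emph{injective on the newform subspace} $J^{\mathrm{cusp},\new}_{k,N}(p^s,\chi)$ and lands in scalar-valued half-integral weight newforms. Since $\calZ_N$ preserves $p$-ordinarity, the ordinary Jacobi newforms inject into the ordinary half-integral weight newforms, and \cite[Prop.~3]{HarHalf} gives the weight-independent bound there. This sidesteps your ``main obstacle'' (Hecke-compatibility of the geometric isomorphism of Corollary \ref{EZgen} at $p$-power level) and also the component-projection issue, which you dismiss too quickly: the $2N$ scalar components of a vector-valued form for the Weil representation are permuted and mixed by the Hecke action, so the individual projections are not Hecke-equivariant maps to eigenspaces in the way your argument needs. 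To repair your write-up, replace the appeal to Corollary \ref{EZgen} plus component projections by the injection $\calZ_N$ on newforms, and replace the appeal to \cite[Thm.~1]{HarHalf} by \cite[Prop.~3]{HarHalf}.
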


\begin{proof}
Using \cite[Thm. 5.2]{MR}, we embed the space of Jacobi newforms which are $p$-ordinary injectively into the space of half-integral weight newforms via the generalized Eichler-Zagier map denoted by $\calZ_N$ (or rather $\calZ_m$) in \cite{MR}. Since the map $\calZ_N$ preserves $p$-ordinarity, the image is in the space of $p$-ordinary half-integral weight newforms. To conclude, we apply \cite[Prop.3]{HarHalf} which relies heavily on results of Waldspurger via the Shimura correspondence.
\end{proof}

\subsection{Hida families for $\GL_2$} \label{sec:HidaFamilies} 

We set up the notation for Hida families of modular forms. 
Let $N\geq 1$ be an odd positive integer, and let $p> 5$ be a prime number such that $(p,N)=1$.
Let \[f_\infty(\kappa)=\sum_{m\geq 1}a_n(\kappa)q^n\in\mathcal R\pwseries{q}\] be a primitive Hida family, 
where $\mathcal R$ is a primitive branch of the ordinary Big Hida Hecke algebra acting of forms of tame level 
$\Gamma_1(N)$ with coefficients in the ring of integers $\mathcal O$ of a finite extension $F$ of $\Q_p$; for details and definitions, see 
\cite[\S 2.1]{Ho2}, whose notations we are following, 
as well as \cite{Hida85}, \cite{Hida86}, \cite{Hida88}, \cite[\S 12.7]{Nek} and \cite{LV2}.
We just recall that $\mathcal R$ is an integral complete local noetherian domain, which is finitely generated 
over the Iwasawa algebra $\Lambda=\mathcal O[\![1+p\Z_p]\!]$, and such that for 
each arithmetic weight $\kappa$ in 
\[\mathcal X^\mathrm{arith}(\mathcal R)\subseteq\mathcal X(\mathcal R)=\Hom_{\mathcal O\text{-alg}}^\mathrm{cont}(\mathcal R,\bar\Q_p)\] 
of $\mathcal R$, the formal 
Fourier expansion $f_\kappa=\sum_{n\geq 1}a_n(\kappa)q^n$, where $a_n(\kappa):=\kappa(a_n)$, is an ordinary $p$-stabilized newform of level $N$. 
As a general notation, for $a\in\mathcal{R}$ and 
$\kappa\in\mathcal{X}(\mathcal{R})$ we write $a(\kappa)$ for $\kappa(a)$. 
We also recall that a homomorphism 
$\kappa\in\mathcal X(\mathcal R)$ is said to be 
 {arithmetic} if its restriction to $1+p\Z_p$ has the form 
$\gamma\mapsto \psi(\gamma)\gamma^{k-2}$
for a finite order character $\psi$ (called the  {wild character} of $\kappa$) 
and an integer $k\geq 2$ (called the  {weight} of $\kappa$).   
In this case the weight of $f_\kappa$ is $k$, its level is $\Gamma_s=\Gamma_1(N)\cap\Gamma_1(p^s)$ 
where $s\geq 1$ is the maximum between the conductor of $\psi$ and $1$, 
and the coefficients $a_n(\kappa)$ belong to the finite 
extension $\calR_\kappa:=\mathcal R_{\p_\kappa}/\p_\kappa\mathcal R_{\p_\kappa}$ of $\Q_p$, 
where $\p_\kappa:=\ker(\kappa)$ and $\mathcal R_{\p_\kappa}$ is the localization of $\mathcal R$ at $\kappa$.
We also introduce the notation $\mathcal O_{f_\kappa}$ for the valuation ring of $\calR_{\kappa}$. 
We call $(\chi,k)$ the  {signature} of the arithmetic character $\kappa$, see \cite[Def. 1.2.2]{St}.

\subsection{Hida families of Jacobi forms}\label{Jacobi forms subsec}

We recall the notation used in \cite{HarHalf}. Let as above $\Lambda$ be the complete group algebra of $1+p\Z_p$ with coefficients in $\mathcal{O}$, which is non-canonically isomorphic to the one-variable power series ring in a variable $X$ with coefficients in $\mathcal{O}$ via the map $u \mapsto 1 + X$, after fixing a generator $u \in 1+p\Z_p$. Fix  an algebraic closure $\overline{\L}$ of the quotient field $\L$ of $\Lambda$. For each normal integral domain $\I$ in $\overline{\L}$ finite over $\Lambda$ (for example, one can take $\I=\mathcal{R}$ with $\mathcal{R}$ as in \S \ref{sec:HidaFamilies}), let $\calX(\I)$ be weight space i.e., the space of $\C_p$-valued points of $\Spec(\I)$. We say that a weight $\kappa\in\mathcal{X}(\I) $ is arithmetic if its restriction to $\Lambda$ is, in the sense of the previous section. 

\begin{definition} 
The {metaplectic cover} of $\I$ is the $\I$-algebra $\tilde{\mathbb{I}}=\mathbb{I}\otimes_\Lambda\tilde{\Lambda}$, where $\tilde{\Lambda}=\Lambda$ as $\Lambda$-module, but the $\Lambda$-algebra structure is given by 
$(\lambda,x)\mapsto\lambda^2x$ for $\lambda\in\Lambda$ and $x\in\tilde{\Lambda}$. \end{definition}
Put $\tilde{\mathcal{X}}(\mathbb{I})=\Spec(\tilde{\mathbb{I}})(\C_p)$. 
We have a canonical surjective map $\pi:\tilde{\mathcal{X}}(\mathbb{I})\twoheadrightarrow {\mathcal{X}}(\mathbb{I})$, which makes $\tilde{\mathcal{X}}(\mathbb{I})$ a $2$-fold cover of 
${\mathcal{X}}(\mathbb{I})$. 

\begin{definition} 
We say that a point $\tilde\kappa\in\tilde{\mathcal{X}}(\mathbb{I})$ is   {arithmetic} if 
$\pi(\tilde\kappa)$ is arithmetic.\end{definition}

\begin{remark} Our examples arising from theta liftings in the next section are naturally expressed in terms of the metaplectic cover $\widetilde{\mathcal X}(\I)$ obtained from twisting the $\Lambda$-algebra structure by $\sigma(t)= t^2$ on $1+p\Z_p$, because of the choices related to lifting data, see \cite[\S 3 and 6]{St} or \cite[\S 3.3]{LN-Doc}.\end{remark}

\begin{dfn} \label{LambdaJacobi} 
\begin{enumerate}
\item A  {$\Lambda$-adic Jacobi form of index $N$ over $\I$}, or simply a  {$\I$-adic modular form}, is a formal power series in $\tilde{\I}[\![q,\zeta]\!]$: 
\[F_\infty (\tilde{\kappa})= \sum_{r^2 \leq 4nN} c_{n,r}(\tilde{\kappa}) q^n \zeta^r,  \]
such that the specializations $F_{\tilde{\kappa}} :=F_\infty(\tilde{\kappa})$ 
at almost all arithmetic primes $\kappa\in\tilde{\mathcal{X}}(\I)$ are images under the fixed embedding $\overline{\Q} \hookrightarrow \overline{\Q}_p$ of Fourier expansions of classical Jacobi forms of index $N$, weight $k$, level $p^s$, for some integer $s$, and character $\chi_{\kappa}$ of conductor dividing $Np^s$, which are eigenforms for all Hecke operators. We call  {classical specialisations} those classical Jacobi forms which arise as specialisations at arithmetic primes. We finally write $\J_{N}(\I)$ for the $\I$-module of $\I$-adic modular forms. 
\item The $\I$-module of  {$p$-ordinary $\I$-adic Jacobi forms} 
is the $\I$-submodule of $\J_{N}(\I)$ consisting of $\I$-adic forms whose all 
classical specialisations are ordinary. 
\item The $\I$-module $\J_{N}^{\ord, \cusp}(\I)$ of  {ordinary $\I$-adic Jacobi cusp forms} is defined as the $\I$-submodule of 
the $\I$-module $\J_{N}^{\ord}(\I)$ consisting of formal power series whose classical specialisations are cuspforms.
\item We say that $\I$ is  {primitive} if for any $F\in\J_{N}^{\ord, \cusp}(\I)$
there is a Zariski open set $U\in\tilde{\mathcal{X}}(\I)$ such that $F(\kappa)$ is a newform for every $\kappa\in U$. 
\end{enumerate}
\end{dfn}

Similarly, the module of cusp forms is defined by adding the condition of being cuspidal. We denote $\J_{N}^{\ord, \cusp}(\I)$ the $\I$-module of ordinary cuspidal forms. 

\begin{rmk} Examples of primitive $\Lambda$-algebras $\I$ arise naturally when one considers, as we will do in the next section, 
Theta lifts of primitive branches of Hida families.\end{rmk}

\begin{thm}\label{thm 2.13} Let $\I$ be primitive. 
The module of $\Lambda$-adic ordinary cusp forms $\J_{N}^{\ord, \cusp}(\I)$ is free of finite rank over the Iwasawa algebra $\Lambda$.\end{thm}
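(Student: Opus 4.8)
The plan is to bootstrap from the half-integral weight case established by Hida in \cite{HarHalf}, transporting everything across the Eichler-Zagier isomorphism of Corollary \ref{EZgen} and its $p$-adic refinement implicit in the Density Theorem \ref{densitythm}. First I would use Corollary \ref{EZgen} together with the compatibility of $p$-adic completion with the vector-valued construction (noted in the proof of Theorem \ref{densitythm}) to identify $\J_N^{\ord,\cusp}(\I)$ with the corresponding $\I$-module of $\calW_{1,2N}^\vee$-vector-valued $\I$-adic half-integral weight ordinary cusp forms. Since $\calW_{1,2N}$ has finite rank $2N$, projecting to scalar-valued components reduces the statement to the analogous assertion for scalar-valued half-integral weight $\Lambda$-adic ordinary cusp forms, which is exactly Hida's theorem \cite[Thm.~1]{HarHalf} (or its immediate consequence): that module is free of finite rank over $\Lambda$.

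Concretely, I would argue as follows. By the Density Theorem \ref{densitythm}, the $p$-adic completion $\widehat{J}^{\cusp}_{k,N}(W(\Fp))$ is independent of $k$ for $k\geq 2$; call it $\mathbb{V}$. The space of $\I$-adic ordinary cusp forms embeds, via $q,\zeta$-expansions and the Eichler-Zagier dictionary, into $\mathbb{V}\,\widehat\otimes_{W(\Fp)}\tilde{\I}$ cut out by the condition that all arithmetic specializations are classical ordinary eigenforms. Hida's ordinary idempotent $e=\lim U_J(p)^{n!}$ acts on this big space; because the $U_J(p)$-action corresponds across $\calZ_N$ to the Hecke action on half-integral weight forms (cf.\ the proof of Theorem \ref{weakcontrol}), $e$ preserves ordinarity and its image is the $\I$-module in question. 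Now $\Lambda$ is a regular local ring of dimension two (a power series ring in one variable over the DVR $\mathcal O$), hence $\J_N^{\ord,\cusp}(\I)$, being torsion-free and finitely generated over $\Lambda$, is free once we know it is finitely generated; finite generation follows from Theorem \ref{weakcontrol}, which bounds the ranks of the classical specializations uniformly in $k$, combined with the control-type comparison that each specialization $\J_N^{\ord,\cusp}(\I)/\mathfrak p_\kappa \hookrightarrow J^{\cusp,\ord}_{k,N}(p^s,\chi)$. Torsion-freeness is clear since $\J_N^{\ord,\cusp}(\I)\subseteq \tilde{\I}\pwseries{q,\zeta}$ and $\tilde{\I}$ is an integral domain finite over $\Lambda$.

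The main obstacle I anticipate is not the algebra—freeness over the regular two-dimensional ring $\Lambda$ is automatic for a finitely generated torsion-free module—but rather pinning down precisely that the $\I$-module defined by the ``almost all arithmetic specializations are classical'' condition is genuinely finitely generated over $\Lambda$, i.e.\ that it does not acquire extra elements not seen by Hida's half-integral weight theory. This requires knowing that the Eichler-Zagier isomorphism of Corollary \ref{EZgen} is compatible with Hecke operators and with the integral/$p$-adic structures at all levels $\Gamma_1(p^s)$ simultaneously, so that it induces an isomorphism of $\Lambda$-adic modules, not merely a weight-by-weight isomorphism of $\C$-vector spaces. Granting that compatibility (which the geometric setup of \S\ref{sec:2.1} and the discussion around Theorem \ref{weakcontrol} are designed to supply, via $\calZ_N$ preserving $p$-ordinarity), the proof is a formal transfer: $\J_N^{\ord,\cusp}(\I)$ is carried isomorphically to a direct summand (cut out by the ordinary idempotent) of a finite direct sum of copies of Hida's $\Lambda$-adic half-integral weight ordinary cusp form module, which is free of finite rank over $\Lambda$ by \cite{HarHalf}; hence so is $\J_N^{\ord,\cusp}(\I)$.
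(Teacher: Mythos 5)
Your overall strategy---transporting the whole $\Lambda$-adic module across an Eichler--Zagier isomorphism into Hida's half-integral weight theory and citing his freeness theorem---is genuinely different from the paper's, which runs Wiles's determinant argument directly on the Jacobi side: pick $\I$-linearly independent $F_1,\dots,F_m$ in $\J_N^{\ord,\cusp}(\I)$, choose pairs $(n_i,r_i)$ with $D=\det(c_{F_j}(n_i,r_i))\neq 0$, use the density theorem and primitivity to find an arithmetic $\kappa$ at which every $F_i(\kappa)$ is an ordinary newform and $D(\kappa)\neq 0$, and conclude $m\leq\dim J^{\new,\ord}_{\kappa,N}(p^{s(\kappa)},\chi(\kappa))$, which is bounded independently of $\kappa$ by Theorem \ref{weakcontrol}. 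Your route founders on exactly the obstacle you flag and then waive: the paper never constructs a Hecke-equivariant, integrally normalized Eichler--Zagier isomorphism at the $\Lambda$-adic level, and it cannot cheaply do so, because the map $\calZ_N$ is injective only on newforms (see the remark following Corollary \ref{coro 2.14}); this is precisely why the primitivity hypothesis enters and why Eichler--Zagier is used only weight by weight, inside the proof of Theorem \ref{weakcontrol}. Relatedly, you do not explain how a uniform bound on the dimensions of classical specializations yields finite generation: a bound on $\dim(\J/\mathfrak p_\kappa\J)$ at height-one primes does not by itself give finite generation. The determinant step is what converts the bound into finiteness of $\dim_{\K}\bigl(\J_N^{\ord,\cusp}(\I)\otimes\K\bigr)$, whence $\J_N^{\ord,\cusp}(\I)\subseteq D^{-1}(\I F_1+\cdots+\I F_m)$ and finite generation over the noetherian ring $\I$.

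The sharper error is your claim that freeness over $\Lambda$ is ``automatic for a finitely generated torsion-free module.'' The ring $\Lambda\cong\mathcal O[[X]]$ is a regular local ring of Krull dimension two, not one, and finitely generated torsion-free modules over it need not be free: the maximal ideal $(\varpi,X)$, with $\varpi$ a uniformizer of $\mathcal O$, is torsion-free of rank one but requires two generators. Freeness needs an extra input---either that the module of $\Lambda$-adic forms is saturated in $\tilde{\I}[[q,\zeta]]$ (if $\lambda F$ has classical ordinary cuspidal specializations at almost all arithmetic points and $\lambda\neq 0$ in $\Lambda$, then so does $F$), hence reflexive, and finitely generated reflexive modules over a two-dimensional regular local ring are free; or a constancy-of-rank argument across arithmetic specializations. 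This is the content of the step the paper delegates to \cite{HarHalf} with ``the rest of the proof is identical,'' and it cannot be replaced by the dimension count you give.
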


\begin{proof}
As in \cite{HarHalf}, we follow the argument of Wiles. The main observation is that we can pick an arithmetic weight such that all the specializations of a maximal finite set of linearly independent elements in the space of $p$-ordinary forms are newforms, and then apply Theorem \ref{weakcontrol}. We give the details by paraphrasing Hida's account in \cite[Prop.4, p. 159]{HarHalf}. Let $\J_{N}^{\cusp, \ord}(\I) \otimes \K$ be the $\K$-vector space of $p$-ordinary Jacobi modular forms of index $N$, where $\K$ is the fraction field of $\I$. Let $F_1, \dots, F_m$ be a finite set of linearly independent elements in $\J_{N}^{\cusp, \ord}(\I)$ over $\I$, with Fourier coefficients denoted by $c_{F_j}(n,r)$. Then we can find pairs of integers $(n_i, r_i)$, $i = 1, \dots, m$ so that
\[ D = \det(c_{F_j}(n_i,r_i)) \neq 0, \quad 1 \leq i,j \leq m.\]

\noindent
By the density theorem and the fact that $\I$ is primitive we may choose an arithmetic weight $\kappa$ so that for all $i=1,\dots, m$,
\[ F_i(\kappa) \in J_{\kappa, N}^{\new, \ord} (p^{s(\kappa)}, \chi (\kappa) ) \mbox{ and } D(\kappa) \neq 0. \]
\noindent
Since $D(\kappa) = \det(c_{F_j}(n_i,r_i)(\kappa)) \neq 0$, the specializations $F_i(\kappa)$ are also linearly independent. That is,
\[ m \leq \dim J_{\kappa,N}^{\new,\ord}(p^{s(\kappa)}, \chi(\kappa)), \] which is bounded independently of $\kappa$ by Theorem \ref{weakcontrol}. Thus, there is a maximal set of linearly independent elements in $\J_N^{\ord,\cusp}(\I)$. The rest of the proof is identical as in \cite{HarHalf}.
\end{proof}

\begin{cor}\label{coro 2.14} Suppose $\I$ is primitive. 
For all arithmetic points in $\calX(\I)$ such that $\kappa$ is sufficiently large, we have:
\[\J^{\mathrm{cusp},\ord}_N(\I) / \mathfrak{p}_\kappa  \J^{\mathrm{cusp},\ord}_N(\I) \cong 
 J_{k(s),N}^{\cusp,\ord}(p^{s(\kappa)}, \chi) . \]
\end{cor}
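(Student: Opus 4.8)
The plan is to deduce the control theorem from the freeness result of Theorem \ref{thm 2.13} by a standard specialization argument, following the template of Hida's deduction of his Control Theorem from the freeness of the $\Lambda$-module of ordinary forms. First I would record that, by Theorem \ref{thm 2.13}, the module $M := \J_{N}^{\ord,\cusp}(\I)$ is free of finite rank, say $r$, over $\Lambda$; fix a $\Lambda$-basis $F_1,\dots,F_r$. For an arithmetic point $\kappa$ with kernel $\mathfrak{p}_\kappa$, the quotient $M/\mathfrak{p}_\kappa M$ is then free of rank $r$ over the residue field (or artinian quotient) $\Lambda/\mathfrak{p}_\kappa$, and the specialization map sends $F_i \mapsto F_i(\kappa)$, landing by definition of $\I$-adic Jacobi forms in $J_{k(\kappa),N}^{\cusp,\ord}(p^{s(\kappa)},\chi)$. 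So there is a natural $\C$-linear map $\mathrm{sp}_\kappa : M/\mathfrak{p}_\kappa M \to J_{k,N}^{\cusp,\ord}(p^{s},\chi)$, and it suffices to prove it is an isomorphism for $\kappa$ sufficiently large (i.e. outside a finite set of "small" arithmetic points, or for weight $k \gg 0$).

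The injectivity of $\mathrm{sp}_\kappa$ is the step where the primitivity hypothesis and the density theorem do the work, exactly as in the proof of Theorem \ref{thm 2.13}: choosing pairs $(n_i,r_i)$ so that $D = \det(c_{F_j}(n_i,r_i)) \neq 0$ in $\I$, the element $D$ is a nonzero element of the integral domain $\I$, hence $D(\kappa) \neq 0$ for all $\kappa$ outside the proper Zariski-closed set $V(D)$; combined with the density theorem guaranteeing that we may take the $F_i(\kappa)$ to be genuine newforms, the non-vanishing of $D(\kappa)$ forces the $F_i(\kappa)$ to be linearly independent in $J_{k,N}^{\cusp,\ord}(p^s,\chi)$, so $\mathrm{sp}_\kappa$ is injective. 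For surjectivity, I would argue by a dimension count: the source has dimension $r$ (independent of $\kappa$), the target has dimension bounded above by Theorem \ref{weakcontrol} (also independent of $\kappa$), and for at least one — hence, by Wiles's convolution trick, a dense set of — arithmetic weights the two dimensions agree, because every classical $p$-ordinary Jacobi cusp newform lifts to an $\I$-adic family (this is the content of Section \ref{Wilestrick}). By semicontinuity of the fiber dimension of the coherent sheaf $\widetilde{M}$ on $\Spec(\tilde{\I})$ together with the uniform upper bound, the dimension of $M/\mathfrak{p}_\kappa M$ can only jump up on a closed locus, so equality on a dense set of arithmetic points propagates to all sufficiently large ones, giving surjectivity there.

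The main obstacle I expect is pinning down precisely the phrase "sufficiently large" and the exact identification of the target space $J_{k(s),N}^{\cusp,\ord}(p^{s(\kappa)},\chi)$, including matching the character $\chi$ and the level exponent $s(\kappa)$ coming from the wild part of $\kappa$ with those appearing in the classical specialization; this is the bookkeeping that is hidden in the half-integral-weight case in \cite{HarHalf} and must be transported through the Eichler-Zagier isomorphism of Corollary \ref{EZgen}. One must also check that the $\U_J(p)$-ordinary projector behaves well under specialization so that "ordinary" on the $\Lambda$-adic side matches "ordinary" on each classical fiber — this follows from the fact, recalled before Definition \ref{LambdaJacobi}, that $\U_J(p)$ acts on Fourier-Jacobi coefficients by $c(n,r)\mapsto c(p^2 n, pr)$, which is manifestly compatible with specialization, but it needs to be invoked explicitly. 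Apart from these compatibility checks, the argument is formally the same as Hida's, and I would close by remarking, as the authors do, that "the rest of the proof is identical as in \cite{HarHalf}."
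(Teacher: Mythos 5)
Your proposal is correct and follows essentially the same route as the paper: the authors' (very terse) proof likewise obtains surjectivity from Wiles's convolution trick of Section \ref{Wilestrick} and refers the remaining steps (the freeness from Theorem \ref{thm 2.13}, the determinant/linear-independence argument for injectivity at arithmetic points outside the vanishing locus of $D$) to Hida's argument in \cite[p.161]{HarHalf}, which is exactly what you spell out. Your semicontinuity packaging of the surjectivity step is redundant — once every classical ordinary form lifts via the Wiles trick, surjectivity at each arithmetic point is immediate — but it is not wrong.
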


\begin{proof}
The surjectivity is guaranteed by Wiles's trick whose details we recall in the next Subsection, and the rest of the argument follows as in \cite[p.161]{HarHalf}.
\end{proof}

\begin{remark}
It might be possible to obtain a more general version of the above theorem and corollary in which the primitivity assumption is suppressed. This could be accomplished by means of \cite[Theorem 5.16]{MR}, which describes, under certain conditions, the decomposition of Jacobi forms into a direct sum of newforms and oldforms, and using the analogue of Theorem \ref{weakcontrol}  for all $\C$-vector spaces of Jacobi forms appearing in this decomposition. However, to obtain an analogue of Theorem \ref{weakcontrol} one needs to understand the kernel of the Eichler-Zagier map, which is not injective on oldforms, and bound it effectively, at least under some arithmetic conditions. In our applications in this paper and in \cite{LN-GKZ}, we only use primitive branches. \end{remark}

\subsection{Wiles's trick for Jacobi forms}
\label{Wilestrick}

Wiles's trick consist in convoluting a classical modular form $f$ with the $\Lambda$-adic Eisenstein series to show the existence of a Hida family specializing to $f$. Viewing the Eisenstein family as indexing a family of Jacobi forms of index $0$, we obtain by convoluting with a Jacobi form of index $m$ a family of Jacobi forms also of index $m$. A clear description of the Wiles trick for classical modular forms can be found in Hida's book \cite[\S 7.1]{HidaEisen} to which we send the reader for more details. It is enough to consider $\I = \Lambda$, thanks to base change property: $ \J_N^{\ord}(\I) \cong \J_N^{\ord}(\Lambda) \otimes_{\Lambda} \I $ as $\I$ is $\Lambda$-free, cf. \cite[argument on last line of p.149 and two first lines of p.150]{HarHalf}.

Denote by $E(\psi)$ the $\Lambda$-adic Eisenstein series defined in \cite[p. 198]{HidaEisen} , where $\psi = \omega^a$ is an even Dirichlet character with $0 \leq a < p-1$, and $\omega$ the Teichm\"uller character.

Consider a Jacobi form $F \in J_{k',m}(\Gamma_0(p^s), \chi)$, and the product $F \cdot E(\psi)$ inside $\Lambda[[q,\zeta]]$. In particular, we get the product of $F$ and the classical Eisenstein series specialized at weight $k$ as a classical Jacobi form:
\begin{equation} \label{equationweightzero} F \cdot E(\psi)(u^k-1) \in J_{k+k',m}(\Gamma_0(p^s), \chi \psi \omega^{-k}). 
\end{equation}

Let us write $F \cdot E(\psi)(X) = \sum_{r^2 \leq 4nm} c_{n,r}(X) q^n \zeta^r$.
Define the convolution product of $F$ and $E(\psi)$ giving a family over $\Lambda$ (cf. \cite[p.200]{HarHalf}):
\[ F * E(\psi)(X) := \sum_{r^2 \leq  4nm} c_{n,r}\Big( u^{-k'}\chi^{-1}(u)X + (u^{-k'} \chi(u)^{-1} - 1 ) \Big) q^n \zeta^r, \]
\noindent that is, we have the specialization $F * E(\psi)(\chi^{-1}(u) u^{k-k'}-1) \in J_{k,m}(\Gamma_0(p^s), \chi \psi \omega^{-k})$ for all $k > k'$ as well as for $k=0$ from Equation (\ref{equationweightzero}) just above, and hence the desired $\Lambda$-adic family of Jacobi forms in the sense of Definition \ref{LambdaJacobi}.

\begin{rmk}
Thanks to the work of Guerzhoy \cite{Gu2}, \cite{Gu3}, the above computation can be generalized using the more involved Jacobi-Eisenstein series of non-trivial index but this gives rise to a non-trivial shift in the index of the resulting family.
\end{rmk}

\section{$\Lambda$-adic theta liftings} 

The results of this section are similar to that obtained by Stevens and Hida in the context of 
the theta lifting to spaces of 
half-integral weight modular forms \cite{St}, \cite{HarHalf}. 
Similar techniques have already been employed in the context of Jacobi forms by 
Guerzoy in \cite{Gu2}, \cite{Gu3}, \cite{GuerzhoyCrelle} and  \cite{Gu1}.

\subsection{Theta liftings} \label{sec-theta}
In this section, we recall the connection between classical Jacobi forms and modular forms via theta liftings, following \cite{EZ} and \cite{MR}.  

We first set up some general notation, which will be used throughout the paper.  
For any integer $M\geq 1$, any Dirichlet character $\chi:(\Z/M\Z)^\times\rightarrow\C^\times$ 
and any $k\in\Z$,  
denote by $S_{k}(\Gamma_0(M),\chi)$ the complex vector space of 
elliptic cuspforms of weight $k$, level $\Gamma_0(M)$
and character $\chi$. If $M={S}\cdot {T}$ and $\chi$ is a character modulo ${T}$, 
we denote by $J^\mathrm{cusp}_{k,{S}}({T},\chi)$ the complex vector space of 
Jacobi forms of weight $k$, level $\Gamma_0({T})$, character $\chi$ and 
index $M$ (see \cite[Ch. I, \S 1]{EZ} for the definition). If $T=1$ (so the character $\chi$ is trivial), 
we denote the corresponding spaces simply by $S_k(\Gamma_0(M))$ 
and $J^\mathrm{cusp}_{k,M}$, respectively.
For $f\in S_{k}(\Gamma_0(M),\chi)$ and $\phi\in J^\mathrm{cusp}_{k,{S}}({T},\chi)$ 
we have the respective Fourier expansion   
$f(z)=\sum_{n\geq 1}a_nq^n$ and Fourier-Jacobi expansion
$\phi(\tau,z) = \sum_{r,n} c(n,r) q^n \zeta^r$, 
where $q = e^{2 \pi i \tau}$ and $\zeta = e^{2 \pi i z}$ 
and the Fourier-Jacobi expansion is taken over all pairs $(n,r)$ of integers such that 
$r^2-4{S}n<0$. 

As a general notation, let $\mathrm{T}(m)$ and $\mathrm{T}_J(m)$  be the Hecke operators at $m \in \N$ 
acting on the space of elliptic modular forms $S_k(\Gamma_0(M),\chi)$ and Jacobi forms 
$J^\mathrm{cusp}_{k,S}(T,\chi)$, respectively. 
If $\ell$ is a prime number, which is not coprime to the level (in the 
case of elliptic modular forms) respectively, not coprime to the index or the level (in the case of Jacobi forms), 
we denote the operators by $\mathrm{U}(\ell)$ and $\mathrm{U}_J(\ell)$ respectively. 
We recall the description of the action of Hecke operators on Jacobi forms 
in some special cases ({cf.} \cite[Ch. I \S 4]{EZ}, \cite[\S3]{MRVSK}). 
Let
$\phi(\tau,z) = \sum_{r,n} c(n,r) q^n \zeta^r$ be a form in $J^\mathrm{cusp}_{k,S}(T,\chi)$.
Write $(\phi|\mathrm{T}_J({m}))(\tau,z) = \sum_{r,n} c^*(n,r) q^n \zeta^r$. 
If $m$ is coprime with $M$, we have ({cf.} \cite[Thm. 4.5]{EZ})
\[c^*(n,r)=\sum_{d\mid m}\left(\frac{D}{d}\right)d^{k-2}c\left(\frac{m^2n}{d^2},\frac{mr}{d}\right).\]
Moreover, if $m=\ell$ is a prime number dividing $T$, then 

For any ring $R$, let $\mathcal P_{k-2}(R)$ denote
the $R$-module of homogeneous polynomials in 2 variables 
of degree $k-2$ with coefficients in $R$, 
equipped with the right action of the semigroup $\M_2(R)$ defined by the formula 
\begin{equation}\label{action}
(F|\gamma)(X,Y)=F\left(aX+bY,cX+dY\right)\end{equation}
for $\gamma=\smallmat abcd \in \M_2(R)$. Let $\mathcal{V}_{k-2}(R)$ denote the $R$-linear dual of 
$\mathcal{P}_{k-2}(R)$, equipped with the left $\M_2(R)$-action induced from the action on 
$\mathcal{P}_{k-2}(R)$. 

For each integer $\Delta$, let 
$\mathcal{Q}_{\Delta}$ be the set of integral quadratic 
forms  \[Q=[a,b,c]=ax^2+bxy+cy^2\]
of discriminant $\Delta$ and, for any integer $M\geq 1$ and any integer $\rho$, let
$\mathcal{Q}_{M,\Delta,\rho}$ denote the subset of $\mathcal{Q}_\Delta$ consisting 
of integral 
binary quadratic forms $Q=[a,b,c]$ of discriminant $\Delta$ such that 
$b\equiv\rho\pmod{2M}$ and $a\equiv 0\mod M$. Let 
$\mathcal{Q}^0_{M,\Delta,\rho}$ be the subset of 
$\mathcal{Q}_{M,\Delta,\rho}$ consisting of forms which are $\Gamma_0(M)$-primitive, 
i.e., those $Q\in \mathcal{Q}_{M,\Delta,\rho}$ 
which can be written as $Q=[Ma,b,c]$ with $(a,b,c)=1$. 
Those sets are equipped with the natural right action 
of $\SL_2(\Z)$ described in Equation \eqref{action}. 

Let $Q\mapsto\chi_{D_0}(Q)$ be the generalized genus character attached to $D_0$
defined in \cite[Ch. I, Sec. 1]{GKZ}. The character 
$\chi_{D_0}:\mathcal{Q}_{M,\Delta,\rho}\rightarrow\{\pm1\}$ depends on $(M,\Delta,\rho)$, 
and sometimes we will denote it by $\chi_{D_0}^{(M,\Delta,\rho)}$ to stress this dependency, 
or simply $\chi_{D_0}$ as above if $(M,\Delta,\rho)$ is understood.  
We recall the characterization of this 
character given in \cite[Prop. 1]{GKZ}.
If $Q=\ell\cdot Q'$ for some form $Q'\in\mathcal{Q}^0_{M,\Delta,\rho}$, 
then we define it as $\chi_{D_0}(Q)=\left(\frac{D_0}{\ell}\right)\cdot \chi_{D_0}(Q')$, so it is enough 
to define it on $\Gamma_0(M)$-primitive forms. Fix $Q\in\mathcal{Q}^0_{M,\Delta,\rho}$.  
If $(a/M,b,c,D_0)=1$, then pick any factorization $M={S}\cdot {T}$ with ${S}>0$, ${T}>0$ 
and any integer $n$ coprime with $D_0$ 
represented by the quadratic form $[a/{S},b,c{T}]$: then 
$\chi_{D_0}(Q):=\left(\frac{D_0}{n}\right)$. Otherwise i.e., if $(a/M,b,c,D_0)\neq1$, then
we set $\chi_{D_0}(Q):=0$. 

For $Q\in\mathcal{Q}_{M,\Delta,\rho}$, denote by
$C_Q$ the oriented geodesic path in the upper half plane attached to $Q$ 
joining the points $r_Q$ and $s_Q$ in $\PP^1(\Q)$, whose construction is 
described in \cite[\S2.1]{St}. 

Let $\mathcal D_0=\mathrm{Div}^0(\mathbb P^1(\Q))$
be the group of degree-zero divisors on rational cusps of the complex upper half plane 
$\mathcal H$, equipped with the left action of the semigroup {$\M_2(\Q)$} by fractional linear 
transformations. 
If $A$ is a $\Z[\M_2(R)]$-module 
the group of homomorphisms 
$\Hom(\mathcal{D}_0,A)$ is equipped with a standard left action of $\M_2(R)$ 
defined for $\gamma\in\M_2(R)$ and $\phi\in\Hom(\mathcal{D}_0,A)$ by 
$(\gamma\cdot\phi)(d)=\gamma\cdot\phi(d)$. 
If $\Gamma$ is a congruence subgroup of $\SL_2(\Z)$ and $A$ is a $\Z[\Gamma]$-module as above, 
let \[\mathrm{Symb}_\Gamma(A)=\Hom_\Gamma(\mathcal D_0,A)\] be the 
group of $A$-valued $\Gamma$-invariant modular symbols.  
As a general notation, if $m$ is a modular symbol in $\mathrm{Symb}_\Gamma(A)$, we write $m\{r\rightarrow s\}\in A$ for the
value of $m$ on the divisor $s-r\in\mathcal{D}_0$.  

We now recall the construction of 
the theta lifting of the modular form $f$ to the 
complex vector space of Jacobi forms, following \cite[Ch. II]{GKZ}.

Fix an integer $M\geq 1$, a divisor ${T}\mid M$, and 
a 
Dirichlet character 
\[\chi:(\Z/{T}\Z)^\times\rightarrow\C^\times.\] 
Put ${S}=M/{T}$, and assume that $({S},{T})=1$.   
Let $f$ be
a normalized cuspform of positive even weight $2k$ and level $\Gamma_0(M)$, 
where $M\geq 1$ is an odd integer, and character $\chi^2$.
To $f$ we may associate the modular symbol 
$\tilde{I}_f\in\mathrm{Symb}_{\Gamma_1(M)}(\mathcal{V}_{2k-2}(\C))$ by the integration formula 
\[\tilde{I}_f\{r\rightarrow s\}(P)=2\pi i\int_r^sf(z)P(z,1)dz.\] 

\begin{definition}\label{index pairpair}
A  {index $S$ pair} is a pair $(D,r)$ of integers 
consisting of a negative discriminant $D$ of an integral quadratic form 
$Q=[a,b,c]$ such that 
$D\equiv r^2\mod 4S$. An index $S$ pair 
is said to be  {fundamental} if $D$ is a fundamental discriminant. 
\end{definition}

Fix a fundamental index $S$ pair $(D_0,r_0)$. 
For any index $S$ pair $(D,r)$ let 
$\mathcal F_{D_0,r_0}^{(D,r)}({S},{T})$ be the set of 
integral binary quadratic forms 
$Q=[a,b,c]$ 
modulo the right action of $\Gamma_0(M)$
described in Equation \eqref{action},
such that: 
\begin{itemize} 
\item $\delta_Q=b^2-4ac={T}^2D_0D$; 
\item $b\equiv -{T}r_0r\mod 2{S}$;
\item $a\equiv 0\mod {S}{T}^2$. 
\end{itemize} 
If ${T}=1$, we simply 
write $\mathcal F_{D_0,r_0}^{(D,r)}(M)$ for $\mathcal F_{D_0,r_0}^{(D,r)}(M,1)$. 
In the previous notation, if $Q$ is a representative 
of a class in 
$\mathcal F_{D_0,r_0}^{(D,r)}({S},{T})$, then $Q$ belongs to 
$\mathcal{Q}_{{S},{T}^2D_0D,-{T}r_0r}$. In particular, we may consider the 
genus character $Q\mapsto\chi_{D_0}(Q)$ with 
$\chi_{D_0}=\chi_{D_0}^{(S,T^2D_0D,-Tr_0r)}$, 
for all classes $Q$ 
in $\mathcal F_{D_0,r_0}^{(D,r)}({S},{T})$. We may also define a 
character $Q=[a,b,c]\mapsto\chi(Q)=\chi(c)$ on the set 
$\mathcal F_{D_0,r_0}^{(D,r)}({S},{T})$. 

For this paper, we only need to consider the theta liftings studied in \cite{MR} in 
two particular situations: when $\chi$ is primitive and when $\chi$ is trivial and $T$ is a prime number. 
We make the definition in the two cases separately. 
If $\chi$ is primitive, define 
\[
\begin{split}
\tilde{I}_f(D,r)
&=\sum_{Q\in\mathcal F_{D_0,r_0}^{(D,r)}({S},{T})}\chi(Q)\cdot\chi_{D_0}(Q)\cdot\int_{\gamma_Q}f(z)Q(z,1)^{\frac{k-2}{2}}dz\\&
:=\sum_{Q\in\mathcal F_{D_0,r_0}^{(D,r)}({S},{T})}
\chi(Q)\cdot \chi_{D_0}(Q)\cdot \tilde{I}_f\{r_Q\rightarrow s_Q\}(Q^{k-1})
\end{split}
\]
while if $\chi$ is trivial, and $T$ is a prime number, define 
\[\begin{split}
\tilde{I}_f(D,r):=T^{k-1}\sum_{Q\in\mathcal F_{D_0,r_0}^{(D,r)}({S},{T})}&
\chi(Q)\cdot \chi_{D_0}(Q)\cdot \tilde{I}_f\{r_Q\rightarrow s_Q\}(Q^{k-1})\\
&-\left(\frac{D_0}{T}\right)\cdot\sum_{Q\in\mathcal F_{D_0,r_0}^{(T^2D,Tr)}({ST},{1}) }\chi(Q)\cdot \chi_{D_0}(Q^{k-1})\cdot \tilde{I}_f\{r_Q\rightarrow s_Q\}(Q)\end{split}
\]

\begin{remark} The definition in \cite{MR} when $\chi$ is trivial looks 
slightly different, since the second sum 
is taken 
over quadratic forms modulo $\Gamma_0(Np^2)$; however, 
each $\gamma\in \Gamma_0(Np)$, $\gamma\not\in \Gamma_0(Np^2)$ gives 
the same contribution to the above sum
and we get the formula displayed above
by taking into account the different powers of $T$ used here and in \cite{MR}.
\end{remark}

For $D=r^2 - 4{S}n$, put $\tilde{c}_f(n,r)=\tilde{I}_f(D,r)$. 
Then 
$\tilde{\mathcal{S}}_{D_0,r_0}^{(\chi)}(f)=\sum_{r,n} \tilde{c}_f(n,r) q^n \zeta^r$
belongs to $J_{k+1,{S}}^\mathrm{cusp}({T},\chi)$. The map 
$f\mapsto \tilde{\mathcal{S}}^{(\chi)}_{D_0,r_0}(f)$ from 
$S_{2k}(\Gamma_0(M),\chi^2)$ to 
$J_{k+1,{S}}^\mathrm{cusp}({T},\chi)$ thus obtained is called
the  {$(D_0,r_0)$-theta lifting} 
to the space of Jacobi forms ({cf.} \cite[Ch. II]{EZ}, \cite[Sec. 3]{MR}).
The matrix $\smallmat 100{-1}$ normalizes 
$\Gamma_1(M)$ and hence induces an involution on the space of modular symbols 
$\mathrm{Symb}_{\Gamma_1(M)}(\mathcal{V}_{2k-2}(\C))$; for each $\varepsilon\in\{\pm\}$, 
we denote by $\tilde{I}_f^\varepsilon$ the 
$\varepsilon$-eigencomponents of $\tilde{I}_f$ with respect to this involution. 
It is known that there are complex periods $\Omega_f^\varepsilon$ such that 
the $I_f^\varepsilon=\frac{\tilde{I}_f^\varepsilon}{\Omega_f^\varepsilon}$ belong 
to $\mathrm{Symb}_{\Gamma_1(M)}(\mathcal{V}_{2k-2}(F_f))$, where 
$F_f$ is the extension of $\Q$ generated by the Fourier coefficients of $f$. These periods 
can be chosen so that the Petersson norm $\langle f,f\rangle$ equals the product 
$\Omega_f^+\cdot\Omega_f^-$; note that the $\Omega_f^\varepsilon$ are well-defined 
only up to multiplication by non-zero factors in $F^\times_f$. 

Let 
$Q\in\mathcal F_{D_0,r_0}^{(D,r)}({S},{T})$. 
It follows easily from the definition of the generalized 
genus character that $\chi_{D_0}(Q)=\chi_{D_0}(Q|\iota)$
(it is enough to note that if the integer $n$ is represented 
by the quadratic form $[a/{S},b,c{T}]$ with the integers $x$ and 
$y$, then it is represented by the quadratic form 
$[a/{S},-b,c{T}]$ with the integers $x$ and $-y$). 
Also, one can easily check that $r_Q=-s_{Q|\iota}$ and $s_Q=-r_{Q|\iota}$, and therefore  
\[\left((\tilde{I}_f|\iota)\{r_Q\rightarrow s_Q\}\right)(Q^{k-1})=
\tilde{I}_f\{-r_Q\rightarrow -s_Q\}\left((Q|\iota)^{k-1}\right)=-
\tilde{I}_f\{r_{Q|\iota}\rightarrow s_{Q|\iota}\}\left((Q|\iota)^{k-1}\right).
\]
Finally, it is immediate that $\chi(Q)=\chi(Q|\iota)$. 
Combining these observations, we find that  
\[
\tilde{I}_f(D,r)
=
\sum_{Q\in\mathcal F_{D_0,r_0}^{(D,r)}({S},{T})}\chi(Q)\cdot\chi_{D_0}(Q)\cdot
\tilde{I}_f^-\{r_Q\rightarrow s_Q\}(Q^{k-1}).
\] 
Normalize the theta lifting by 
$\mathcal{S}^{(\chi)}_{D_0,r_0}(f)=\frac{\tilde{\mathcal{S}}^{(\chi)}_{D_0,r_0}(f)}{\Omega_{f}^-}$. 
We thus obtain a map 
\[\mathcal{S}^{(\chi)}_{D_0,r_0} : S_{2k}(\Gamma_0(M),\chi^2)\longrightarrow J^\mathrm{cusp}_{k+1,{S}}({T},\chi)\]
such that, if we write out the Fourier expansion
\[\mathcal{S}^{(\chi)}_{D_0,r_0}(f)=\sum_{r^2 - 4{S}n<0}{c}_f(n,r) q^n \zeta^r\]
then for $D=r^2 - 4{S}n$ we have 
\begin{equation}\label{Shintani}
{c}_f(n,r)= 
\sum_{Q\in\mathcal F_{D_0,r_0}^{(D,r)}({S},{T})}\chi(Q)\cdot\chi_{D_0}(Q)\cdot
{I}_f^-\{r_Q\rightarrow s_Q\}(Q^{k-1}).\end{equation}
In particular, if the Fourier coefficients of $f$ belong to a certain ring $\mathcal{O}\subseteq\bar\Q$,
then the same is true for the Fourier-Jacobi coefficients of 
$\mathcal{S}^{(\chi)}_{D_0,r_0}(f)$. 
If $\chi=\mathbf{1}$ is the trivial character, then we will simply write 
$\mathcal{S}_{D_0,r_0}$ for 
$\mathcal{S}_{D_0,r_0}^{(\mathbf{1})}$.
The map $\mathcal S_{D_0,r_0}^{(\chi)}$ are 
equivariant with respect to the action of Hecke operators i.e., 
$\mathcal S_{D_0,r_0}^{(\chi)}(f|\mathrm{T}(m))=\mathcal S_{D_0,r_0}^{(\chi)}(f)|\mathrm{T}_J(m)$. 

\subsection{Universal ordinary modular symbols} 
Recall that $\mathcal{O}$ is the valuation ring of a finite field extension of $\Q_p$. 
Let $\mathrm{Cont}(\Z_p^2,\mathcal{O})$ be the $\mathcal{O}$-module of $\mathcal{O}$-valued 
continuous functions on $\Z_p^2$, and let  
$\mathrm{Step}(\Z_p^2,\mathcal{O})$ be the $\mathcal{O}$-submodule of 
$\mathrm{Cont}(\Z_p^2,\mathcal{O})$ consisting of locally constant functions. 
Let \[\tilde{\D}=\Hom_{\Z_p}(\mathrm{Step}(\Z_p^2),\mathcal{O})\] be the group of 
$\mathcal{O}$-valued measures on $\Z_p^2$; we can extend in a unique way any $\mu\in\tilde{\D}$ to 
a function $\mu:\mathrm{Cont}(\Z_p^2,\mathcal{O})\rightarrow\mathcal{O}$. 
Adopting a standard convention, for $\varphi$ a continuous function on $\Z_p^2$, we denote 
$\int_{\Z_p^2}\varphi(x)d\mu$ 
the value $\mu(\varphi)$; if $\chi_X$ is the characteristic function of $X\subseteq\Z_p^2$, 
we write $\int_X\varphi(x)d\mu(x)$ for $\int_{\Z_p^2}\varphi(x)\cdot\chi_X(x)d\mu(x)$. 
Let $\mathbb D$ denote the $\mathcal{O}$-submodule of $\tilde{\D}$ consisting 
of those $\mathcal{O}$-valued measures which are supported on the 
set of primitive vectors $\mathbb X=(\Z_p^2)'$ in $\Z_p^2$. The $\mathcal{O}$-modules 
$\tilde{\D}$ and $\D$ are equipped with the action induced by the action of the
group $\GL_2(\Z_p)$ on $\Z_p^2$ by $(x,y)\mapsto (ax+by,cx+dy)$ for $\gamma=\smallmat abcd\in\GL_2(\Z_p)$. The $\mathcal{O}$-module $\mathbb{D}$ is also 
equipped with a structure of $\mathcal{O}\pwseries{\Z_p^\times}$-module induced by the scalar action of $\Z_p^\times$ on $\mathbb X$ ({cf.} \cite[\S 5]{St}, \cite[\S 2.2]{BD}); in particular, $\D$ is also equipped 
with a structure of $\Lambda$-module.   
Let $\Gamma_0(p\Z_p)$ and $\Gamma_1(p\Z_p)$ denote the subgroups of $\GL_2(\Z_p)$ 
consisting of matrices 
which are respectively 
upper triangular and congruent to a matrix of the form $\smallmat 1{*}01$ modulo $p$. 

The group $\mathrm{Symb}_{\Gamma_1(N)}(\mathbb{D})$ is equipped with an action of 
Hecke operators, and we denote 
by \[\W=\mathrm{Symb}_{\Gamma_1(N)}^\ord(\mathbb{D})\] the 
ordinary subspace of $\mathrm{Symb}_{\Gamma_1(N)}(\mathbb{D})$ for the action of the 
$\U_p$-operator; see \cite[(2.2), (2.3)]{GS} for details and more accurate definitions. 
For any $\Lambda$-algebra $R$, write $\W_R=\W\otimes_\Lambda{R}$. 

We fix as in \S \ref{sec:HidaFamilies} 
an integer $N\geq 1$, a prime number $p\nmid N$, $p\geq5$, a primitive branch $\mathcal{R}$ of the Big Hida Hecke algebra 
acting on modular forms of tame level $\Gamma_1(N)$ and a primitive Hida familiy $f_\infty(\kappa)=\sum_{n\geq 1}a_n(\kappa)q^n\in \mathcal{R}\pwseries{q}$. 
If $\kappa$ is an arithmetic point of $\mathcal{X}(\mathcal{R})$ 
of signature $(\chi,k)$, and wild level $p^s$, 
then we  
have a $\Gamma_s$-equivariant homomorphism 
$\rho_\kappa:\mathbb{D}\rightarrow\mathcal{V}_{k-2}(\C_p)$ defined by the formula 
\[\rho_\kappa(\mu)(P)=\int_{\Z_p\times\Z_p^\times}\chi(y)\cdot P(x,y)d\mu(x,y).\]
The homomorphism $\rho_\kappa$ gives rise to an homomorphism, denoted by the same
symbol,  
\[\rho_\kappa:\W_{\mathcal{R}}\longrightarrow 
\mathrm{Symb}_{\Gamma_1(N)}\left(\mathcal{V}_{k-2}(\C_p)\right).\]
For $\Phi\in \W_{\mathcal{R}}$, we put 
$\Phi_{\kappa}=\rho_\kappa(\Phi)$. 

Fix an arithmetic point $\kappa_0$. By \cite[Thm. 5.5]{St}, there exists 
$\Phi_{\kappa_0} \in\mathrm{Symb}_{\Gamma_1(N)}(\mathbb{D}_\mathcal{R})$ 
such that 
\begin{equation}
\label{GS}
\Phi_{\kappa_0}(\kappa)=\lambda(\kappa)\cdot I^-_{f_{\kappa}}\end{equation}
with {$\lambda(\kappa)\in \calR_{\kappa}$ such that $\lambda(\kappa_0)=1$.} 
When $\kappa_0$ is understood, we will simply write $\Phi$ for $\Phi_{\kappa_0}$. 

\subsection{$\Lambda$-adic liftings}\label{sec3.4}

For any $\Lambda$-algebra $R$, we write $\D_R=\D\otimes_\Lambda R$. We switch back to the notation used for Hida families over $\GL_2$.
Recall that $\widetilde{\mathcal X}(\mathcal R)$ is the $\mathcal O$-module of continuous   $\mathcal O$-linear homomorphisms from $\widetilde{\mathcal R}$ to $\bar\Q_p$, where $\widetilde{\mathcal R}=\mathcal R\otimes_\Lambda\widetilde\Lambda$ and  the $\Lambda$-algebra $\widetilde\Lambda$ is equal to $\Lambda$ as an $\mathcal O$-module, but the structure of $\Lambda$-algebra is given by the map $\sigma(t)= t^2$ on $1+p\Z_p$. We have the notion of  arithmetic points $\widetilde{\mathcal X}^\mathrm{arith}(\mathcal R)$ and that of  signature $(\chi,k)$ of an arithmetic point $\tilde\kappa$ in $\widetilde{\mathcal X}^\mathrm{arith}(\mathcal R)$; the effect of $\pi$ on an arithmetic point $\tilde \kappa$ is to double the signature: if $\tilde\kappa$ has signature $(\chi,k)$, then $\pi(\tilde\kappa)$ has signature $(\chi^2,2k)$.

The following result is essentially \cite[Lemma (6.1)]{St}, with minor modifications which are left to the reader.  

\begin{lemma}\label{lemmastevens} Fix a positive integer $k_0$, 
a positive integer $s$, 
and a quadratic form $Q=[a,b,c]$ such that $p^s\mid a$, $p^s\mid b$,  
$p\nmid c$.  
There exists a unique 
morphism 
$j_{Q,k_0}:\mathbb D_\mathcal R\rightarrow \widetilde{\mathcal R}$ of $\mathcal R$-modules 
characterized by the following property: For any pair of points $\kappa\in\mathcal{X}^\mathrm{arith}(\mathcal{R})$ 
and $\tilde\kappa\in\mathcal{X}^\mathrm{arith}(\widetilde{\mathcal{R}})$ of signature 
$(\chi^2,2k)$ and $(\chi,k)$ respectively, satisfying the conditions 
\begin{enumerate}
\item $\pi(\tilde\kappa)=\kappa$; 
\item $2k\equiv 2k_0\pmod{p-1}$; 
\item the wild level of $\chi$ is $p^s$;  
\end{enumerate} 
we have $
\tilde\kappa\left(j_{Q,k_0}(\mu)\right)=\chi(Q)\cdot\rho_\kappa(\mu)\left(Q^{k-1}\right)$. 
\end{lemma}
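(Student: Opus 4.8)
The plan is to construct $j_{Q,k_0}$ explicitly as a $p$-adic integration map against a suitable function on $\Z_p^2$, and then verify the interpolation property on arithmetic points. First I would note that a quadratic form $Q=[a,b,c]$ gives a homogeneous quadratic polynomial $Q(x,y)=ax^2+bxy+cy^2$, and one is tempted simply to set $j_{Q,k_0}(\mu)=\int_{\mathbb X}Q(x,y)^{\,?}\,d\mu$; the issue is that the exponent $k-1$ depends on the point $\tilde\kappa$, not on $Q$ alone, so one must instead interpolate the family of functions $(x,y)\mapsto Q(x,y)^{k-1}$ as $k$ varies, weighted by the character $\chi$. Following Stevens \cite[\S6]{St}, the trick is to use the hypotheses $p^s\mid a$, $p^s\mid b$, $p\nmid c$: on a primitive vector $(x,y)\in\mathbb X$ with $y\in\Z_p^\times$ one has $Q(x,y)=y^2(a(x/y)^2+b(x/y)+c)$, and since $p\mid a,b$ (as $s\ge 1$) while $p\nmid c$, the quantity $u(x,y):=a(x/y)^2+b(x/y)+c$ lies in $\Z_p^\times$, so $Q(x,y)=y^2 u(x,y)$ with $u$ a unit. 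Then $Q(x,y)^{k-1}=y^{2k-2}u(x,y)^{k-1}$, and both $y^{2k-2}$ and $u^{k-1}$ can be interpolated $p$-adically: $y\mapsto \langle y\rangle^{2k-2}$ extends to a character valued in $\tilde\Lambda$ (this is precisely why $\tilde\Lambda$ with the squaring twist $\sigma(t)=t^2$ appears), and $u\mapsto u^{k-1}$, for $u\in1+p\Z_p$ up to Teichm\"uller and the congruence condition $2k\equiv 2k_0\pmod{p-1}$, extends to a $\Lambda$-valued (hence $\widetilde{\mathcal R}$-valued) continuous character.

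Concretely I would define, for $\mu\in\mathbb D_{\mathcal R}$,
\[
j_{Q,k_0}(\mu)=\chi_0(Q)\cdot\int_{\mathbb X}\langle y\rangle^{\,s_2}\,\langle u(x,y)\rangle^{\,s_1}\,\omega(\cdots)\,d\mu(x,y),
\]
where the group-like elements $\langle y\rangle\in\tilde\Lambda$ and $\langle u\rangle\in\Lambda$ are the canonical characters, the Teichm\"uller parts are pinned down by the fixed residue $k_0$ via the condition $2k\equiv 2k_0\pmod{p-1}$, and $\chi_0(Q)$ accounts for the $\chi(Q)=\chi(c)$ factor — here one uses $p\nmid c$ so that $\chi(c)$ makes sense and is interpolated by evaluating the tame/wild character at $c$. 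Because $\mu$ is supported on $\mathbb X$, the primitivity of $(x,y)$ together with $p\nmid c$ forces $y\in\Z_p^\times$ on the support (if $p\mid y$ then $p\nmid x$, but then $Q(x,y)\equiv ax^2\equiv 0$, fine — so actually one must be slightly more careful and split $\mathbb X$ according to whether $y$ is a unit; on the locus $p\mid y$ one has $x\in\Z_p^\times$ and $Q(x,y)=x^2(a+b(y/x)+c(y/x)^2)$ with leading term $a\equiv 0$, so that branch contributes via $a$ — but since $p^s\mid a$ this is highly divisible and the corresponding character is still well-defined). This bookkeeping on the two pieces of $\mathbb X$ is where I expect the routine-but-fiddly work to lie.

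Having defined $j_{Q,k_0}$, the interpolation identity $\tilde\kappa(j_{Q,k_0}(\mu))=\chi(Q)\cdot\rho_\kappa(\mu)(Q^{k-1})$ is then checked by unwinding definitions: applying $\tilde\kappa$ of signature $(\chi,k)$ sends $\langle y\rangle\mapsto \chi_{\mathrm{wild}}(y)\langle y\rangle^{k-1}$-type expressions (with the doubling built into $\tilde\Lambda$ producing $y^{2k-2}$), $\langle u\rangle\mapsto u^{k-1}$, and the Teichm\"uller corrections combine — thanks to $2k\equiv 2k_0\pmod{p-1}$ — to reconstitute $Q(x,y)^{k-1}=y^{2k-2}u^{k-1}$ exactly; the factor $\chi(Q)=\chi(c)$ drops out of $\chi_0(Q)$ after specialization; comparing with $\rho_\kappa(\mu)(Q^{k-1})=\int_{\Z_p\times\Z_p^\times}\chi(y)Q(x,y)^{k-1}d\mu$ and matching the supports gives the claim. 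Uniqueness is immediate: two such morphisms agree after applying every arithmetic $\tilde\kappa$, and arithmetic points are Zariski-dense in $\widetilde{\mathcal X}(\mathcal R)$ while $\widetilde{\mathcal R}$ is a domain, so they coincide. The main obstacle, as in Stevens, is not conceptual but the careful verification that the unit $u(x,y)$ and the power $Q(x,y)^{k-1}$ genuinely extend to continuous $\widetilde{\mathcal R}$-valued functions uniformly over the support of measures in $\mathbb D_{\mathcal R}$ — i.e.\ that the naive pointwise formula actually defines an element of $\widetilde{\mathcal R}$ — which rests on the divisibility hypotheses $p^s\mid a$, $p^s\mid b$, $p\nmid c$ and the congruence condition on the weight.
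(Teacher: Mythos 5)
Your overall architecture is the right one, and it is in fact the only ``proof'' the paper offers: the authors prove nothing here, deferring entirely to Stevens's Lemma~6.1, which is established exactly as you describe --- interpolate $(x,y)\mapsto Q(x,y)^{k-1}$ through group-like elements of the Iwasawa algebra, use $p^s\mid a$, $p^s\mid b$, $p\nmid c$ to make $Q(x,y)$ a unit on the relevant locus, use $2k\equiv 2k_0\pmod{p-1}$ to freeze the Teichm\"uller components, and get uniqueness from Zariski density of the admissible arithmetic points in $\Spec(\widetilde{\mathcal R})$ together with $\widetilde{\mathcal R}$ being a domain. However, two steps of your write-up would fail as stated. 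First, the locus $\{p\mid y\}\subseteq\X$ must simply be discarded, not ``handled'': since $\rho_\kappa$ integrates only over $\Z_p\times\Z_p^\times$, the map $j_{Q,k_0}$ should be defined by integrating the interpolating function times the characteristic function of $\Z_p\times\Z_p^\times$. Your parenthetical claim that on the branch $p\mid y$ the contribution ``via $a$'' is ``still well-defined'' is not correct: there $Q(x,y)\equiv ax^2\equiv 0\pmod p$, and $t\mapsto t^{k-1}$ admits no $p$-adic interpolation in $k$ when $t$ is a non-unit (group-like elements exist only for elements of $1+p\Z_p$ after stripping Teichm\"uller). Fortunately nothing is lost by excluding that locus, precisely because $\rho_\kappa$ never sees it.

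Second, and more substantively, the factor $\chi(Q)=\chi(c)$ cannot be produced by a constant prefactor $\chi_0(Q)$: the character $\chi$ varies with $\tilde\kappa$ (all wild characters of conductor $p^s$ occur among the admissible points), so no fixed scalar specializes to $\chi(c)$ for all of them; nor can you use the group-like element of $\langle c\rangle$, since $\tilde\kappa$ sends it to $\psi(\langle c\rangle)\langle c\rangle^{k-1}$, leaving an unwanted power of $\langle c\rangle$. The actual mechanism --- and the real point of the hypotheses on $Q$ --- is that the character value is \emph{absorbed} into the group-like element of $Q(x,y)$ itself. Writing $\chi=\omega^{k-1}\psi$ with $\psi$ wild of conductor dividing $p^s$, one has on $\Z_p\times\Z_p^\times$ the congruence $cy^2/Q(x,y)\equiv 1\pmod{p^s}$, hence $\psi(c)\psi(y)^2=\psi(\langle Q(x,y)\rangle)$, and therefore
\[
\chi(c)\,\chi^2(y)\,Q(x,y)^{k-1}=\omega(c)^{2(k_0-1)}\,\omega(y)^{4(k_0-1)}\cdot\psi\bigl(\langle Q(x,y)\rangle\bigr)\langle Q(x,y)\rangle^{k-1},
\]
where the Teichm\"uller exponents are constant by condition (2). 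Since $\tilde\kappa$ sends the group-like element $[\langle Q(x,y)\rangle]\in\tilde\Lambda\subseteq\widetilde{\mathcal R}$ to $\psi(\langle Q(x,y)\rangle)\langle Q(x,y)\rangle^{k-1}$, the single formula
\[
j_{Q,k_0}(\mu)=\omega(c)^{2(k_0-1)}\int_{\Z_p\times\Z_p^\times}\omega(y)^{4(k_0-1)}\,\bigl[\langle Q(x,y)\rangle\bigr]\,d\mu(x,y)
\]
satisfies the required interpolation property in one stroke (your factored form $[\langle y\rangle^2]\cdot[\langle u\rangle]$ is equivalent, as group-like elements multiply). With these two corrections your argument is complete and coincides with the one the paper implicitly invokes.
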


We now prove a couple of auxiliary lemmas on quadratic forms. We denote by $\mathcal{P}_{D_0,r_0}^{(D,r)}(N,p^s)$ the subset of $\mathcal{F}_{D_0,r_0}^{(D,r)}(N,p^s)$ 
consisting of classes represented by quadratic forms $[a,b,c]$ 
with $p\nmid c$.  Note that if $s\geq 1$, and $[a,b,c]\in\mathcal{P}_{D_0,r_0}^{(D,r)}(N,p^s)$, then 
$p^s\mid b$: this is because 
$p^{2s}\mid a$ and $p^{2s}\mid b^2-4ac$, which implies that $p^s\mid b$.  

\begin{lemma}\label{rep3}
The map 
$\mathcal{P}_{D_0,r_0}^{(D,r)}(N,p^s)\rightarrow 
\mathcal{P}_{D_0,r_0}^{(Dp^{2(s-1)},rp^{s-1})}(N,p)$ 
which takes $[a,b,c]\mapsto[a,b,c]$ is a bijection. 
\end{lemma}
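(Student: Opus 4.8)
The plan is to observe that, once the defining conditions are written out, the two sets are built from \emph{the same} underlying binary quadratic forms, so the only real content is comparing the two equivalence relations, $\Gamma_0(Np^s)$ on the source and $\Gamma_0(Np)$ on the target. First I would record the trivial identities $p^2\cdot D_0\cdot(Dp^{2(s-1)})=p^{2s}D_0D$ and $p\cdot r_0\cdot(rp^{s-1})=p^sr_0r$. Thus, by the definition of $\mathcal F$, a form $Q=[a,b,c]$ represents a class in $\mathcal P_{D_0,r_0}^{(D,r)}(N,p^s)$ iff $b^2-4ac=p^{2s}D_0D$, $b\equiv -p^sr_0r\pmod{2N}$, $Np^{2s}\mid a$ and $p\nmid c$, while it represents a class in $\mathcal P_{D_0,r_0}^{(Dp^{2(s-1)},rp^{s-1})}(N,p)$ iff it satisfies the same first two conditions together with $Np^2\mid a$ and $p\nmid c$. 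Since $s\geq1$ every representative occurring in the source is a legitimate representative for the target, and since $\Gamma_0(Np^s)\subseteq\Gamma_0(Np)$ the assignment sending the $\Gamma_0(Np^s)$-class of $Q$ to its $\Gamma_0(Np)$-class is well defined and visibly compatible with the condition $p\nmid c$ cutting $\mathcal P$ out of $\mathcal F$.

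For injectivity, let $Q_1=[a,b,c]$ and $Q_2$ be representatives with $Np^{2s}\mid$ (leading coefficient), $p\nmid$ (trailing coefficient) and discriminant $p^{2s}D_0D$, and suppose $Q_2=Q_1|\gamma$ with $\gamma=\smallmat{\alpha}{\beta}{\gamma_0}{\delta}\in\Gamma_0(Np)$; I must show $p^s\mid\gamma_0$ (one already has $N\mid\gamma_0$). As in the remark preceding the lemma, $v_p(a)\geq 2s$ and $p\nmid c$ force $v_p(b)\geq s$ via $b^2=4ac+p^{2s}D_0D$ (here $p\geq5$ is used). By \eqref{action} the leading coefficient of $Q_2$ is $Q_1(\alpha,\gamma_0)=a\alpha^2+\gamma_0(b\alpha+c\gamma_0)$; since $\gcd(\alpha,\gamma_0)=1$ and $p\mid\gamma_0$ we get $p\nmid\alpha$, so $v_p(a\alpha^2)=v_p(a)\geq 2s$, and as $v_p(Q_1(\alpha,\gamma_0))\geq 2s$ too we deduce $v_p\bigl(\gamma_0(b\alpha+c\gamma_0)\bigr)\geq 2s$. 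If $v_p(\gamma_0)=k<s$, then $v_p(b\alpha)=v_p(b)\geq s>k=v_p(c\gamma_0)$ gives $v_p(b\alpha+c\gamma_0)=k$ and hence $v_p\bigl(\gamma_0(b\alpha+c\gamma_0)\bigr)=2k<2s$, a contradiction; so $k\geq s$, i.e. $\gamma\in\Gamma_0(Np^s)$, and $Q_1,Q_2$ already lie in the same source class.

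For surjectivity, take a target class and a representative $Q=[a,b,c]$ with $p\nmid c$, $Np^2\mid a$, discriminant $p^{2s}D_0D$ and $b\equiv -p^sr_0r\pmod{2N}$; from $p^2\mid a$ one gets $p\mid b$. By the Chinese Remainder Theorem choose $t\in\Z$ with $t\equiv -b(2c)^{-1}\pmod{p^s}$ and $t\equiv 0\pmod N$; then $p\mid t$ (because $p\mid b$) and $N\mid t$, so $\gamma:=\smallmat{1}{0}{t}{1}\in\Gamma_0(Np)$. By \eqref{action}, $Q|\gamma=[\,a+bt+ct^2,\ b+2ct,\ c\,]$, and $a+bt+ct^2=\tfrac{(2ct+b)^2-p^{2s}D_0D}{4c}$ has $v_p\geq 2s$ because $2ct+b\equiv0\pmod{p^s}$ and $p\nmid 4c$; it is also divisible by $N$ since $N\mid a$ and $N\mid t$, hence by $Np^{2s}$. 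Moreover $b+2ct\equiv b\pmod{2N}$ (as $2N\mid2ct$), the discriminant is unchanged, and $p\nmid c$, so $Q|\gamma$ represents a class in $\mathcal P_{D_0,r_0}^{(D,r)}(N,p^s)$ mapping to $[Q]_{\Gamma_0(Np)}$. This gives bijectivity.

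The only step that is more than bookkeeping is surjectivity: a priori a target form need only satisfy $Np^2\mid a$, so one must actively move it by a lower-triangular element of $\Gamma_0(Np)$ to a representative with $Np^{2s}\mid a$, and the reason this is possible is precisely the divisibility $p\mid b$, which makes the two congruences $t\equiv -b(2c)^{-1}\pmod{p^s}$ and $t\equiv0\pmod{p}$ simultaneously solvable.
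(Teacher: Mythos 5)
Your proof is correct and follows essentially the same strategy as the paper's: surjectivity by translating a target representative by a lower-unipotent matrix in $\Gamma_0(Np)$ chosen so that $p^s$ divides the new middle coefficient (whence $p^{2s}$ divides the new leading coefficient), and injectivity by showing that any matrix of $\Gamma_0(Np)$ relating two source representatives must already lie in $\Gamma_0(Np^s)$. Your injectivity step, which runs a $p$-adic valuation computation on the leading coefficient $Q_1(\alpha,\gamma_0)$ for an arbitrary $\gamma\in\Gamma_0(Np)$, is if anything a little cleaner than the paper's reduction to the coset representatives $\smallmat{1}{0}{iN}{1}$.
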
 

\begin{proof} The map is clearly well defined. 

We first show the surjectivity. For this, 
we need to show that
we may choose a representative $[a,b,c]$ for any class 
in $\mathcal{P}_{D_0,r_0}^{(Dp^{2(s-1)},rp^{s-1})}(N,p)$
so that $p^{2s}\mid a$, and for this it is enough to 
show that $p^{s}\mid b$ because $p^{2s}\mid\delta_Q$
and $p\nmid c$. 
So fix $Q=[a,b,c]$ a representative of a class in 
$\mathcal{P}_{D_0,r_0}^{(Dp^{2(s-1)},rp^{s-1})}(N,p)$. 
Let $a\in\Z$ such that 
$a\equiv -b/2Ncp\pmod{p^s}$ (note that $p\mid b$ 
because $p^{2s}\mid b^2-4ac$ and $p^2\mid a$, so, 
since $p\nmid c$, 
$-b/2cp$ is a $p$-adic integer)   
and consider the matrix $\gamma=\smallmat10{Nap}1$. 
Then $Q|\gamma=[a'b'c']$ with $b'=b+2Ncpa\equiv0\pmod{p^s}$. 
Since $\gamma\in\Gamma_0(Np)$, 
this shows that we can change representatives of 
any element in $\mathcal{P}_{D_0,r_0}^{(Dp^{2(s-1)},rp^{s-1})}(N,p)$
so that $p^{2s}\mid a$, and therefore this is 
an element in $\mathcal{P}_{D_0,r_0}^{(D,r)}(N,p^s)$, 
proving the surjectivity. We now show that the above map is 
injective. Take two quadratic forms
$[a,b,c]$ and $[a',b',c']$ representing 
classes in $\mathcal{P}_{D,0,r_0}^{(D,r)}(N,p^s)$ and assume they are $\Gamma_0(N)$-equivalent. 
Recall that $p\nmid cc'$.   
Since a set of representatives of $\Gamma_0(N)$ modulo $\Gamma_0(Np^s)$ 
is given by the matrices $\gamma_i=\smallmat10{iN}1$ for $i=0,\dots,p^s-1$, 
we see that, up to changing the representative $[a',b',c']$, there exists $0\leq i<p^s$
such that $[a',b',c']=[a,b,c]|\gamma_i$, and therefore $a'=a+biN+ci^2N^2$. 
Since $p^{2s}\mid a$ and $p^{2s}\mid a'$, 
we then see that  $p^{2s}\mid iN(b+ciN)$. Suppose now $i\neq 0$, and 
write $t=\ord_p(i)$. Then $p^{2s-t}\mid b+ciN$ 
because $p\nmid N$. So $b\equiv -ciN\pmod{p^{2s-t}}$, 
and therefore $b^2\equiv c^2(iN)^2\pmod{p^{2s}}$. On the other hand, 
$p^{s}\mid b$, and therefore, since $p\nmid Nc$, we get $p^s\mid i$, which is not possible. 
This proves that $i=0$, and therefore $[a,b,c]$ and $[a',b',c']$ 
are $\Gamma_0(Np^s)$-equivalent, proving the injectivity of the map. 
\end{proof}

Recall the fixed arithmetic point   
$\kappa_0\in\mathcal{X}^\mathrm{arith}(\mathcal{R})$
of signature $(\chi_0^2,2k_0)$, and the modular symbol 
$\Phi_{\kappa_0}$ attached to $\kappa_0$ in Equation \eqref{GS}.
For $Q$ in $\mathcal{F}_{D_0,r_0}^{(D,r)}{(N,p)}$ put $D_Q=s_Q-r_Q$.
Let   
$\tilde\kappa_0\in\mathcal{X}^\mathrm{arith}(\widetilde{\mathcal{R}})$ be such 
that the pair $(\kappa_0,\tilde\kappa_0)$ satisfies the condition of Lemma \ref{lemmastevens}. 
Extend $j_{Q,k_0}$ to a map $j_{Q,k_0}:
\D_{\mathcal{R}_{\kappa_0}}\rightarrow\widetilde{\mathcal{R}}_{\tilde\kappa_0}$. 
Fix a fundamental index $N$ pair $(D_0,r_0)$. 
For any index $N$ pair $(D,r)$, define  
\begin{equation}\label{familycoeff}
c_{n,r} :=\sum_{Q\in\mathcal{P}_{D_0,r_0}^{(D,r)}(N,p)} \chi_{D_0}(Q)\cdot j_{Q,k_0}\left(\Phi_{\kappa_0}(D_Q)\right).\end{equation} 
Here $\chi_{D_0}=\chi_{D_0}^{(N,DD_0p^2,-rr_0p)}$. 

\begin{dfn}\label{def-fam}
The {$(D_0,r_0)$-family of Shintani lifts centered at $\kappa_0$}
is the formal power series in 
$\mathbb{S}_{D_0,r_0}^{(\kappa_0)}$ in $\widetilde{{\mathcal R}}_{\kappa_0}\pwseries{q,\zeta}$ defined by 
\begin{equation}\label{lift}
\mathbb{S}_{D_0,r_0}^{(\kappa_0)}=
\sum_{D=r^2-4Nn<0}
c_{n,r}q^n\zeta^r.
\end{equation} 
\end{dfn}

When $\kappa_0$ is fixed, we will simply write $\mathbb{S}_{D_0,r_0}$ in place 
of $\mathbb{S}_{D_0,r_0}^{(\kappa_0)}$ to lighten the notation. 

\begin{remark}In the definition above the coefficients $c_{n,r}$ in Equation \eqref{familycoeff} {depend} on the choice of $D_0,r_0$ and $\kappa_0$, but we do not make explicit this dependence to make the notation simpler; similar remarks will apply to similarly defined coefficients. However, we keep here and in the following the 
dependence on these quantities in the notation for the $\Lambda$-adic families themselves, 
such as $\mathbb{S}_{D_0,r_0}^{(\kappa_0)}$.\end{remark}

Let $\kappa_0$ be fixed from now on, and write $\Phi$ and
$\mathbb{S}_{D_0,r_0}$ for $\Phi_{\kappa_0}$ and $\mathbb{S}_{D_0,r_0}^{(\kappa_0)}$.
For any $\tilde\kappa\in\mathcal{X}(\widetilde{\mathcal{R}})$ write 
$\mathbb{S}_{D_0,r_0}(\tilde\kappa)=\tilde\kappa\left(\mathbb{S}_{D_0,r_0}\right)$ for the specialization at $\tilde\kappa$. As the name suggests, $\mathbb{S}_{D_0,r_0}^{(\kappa_0)}$ in Definition \ref{def-fam} interpolates 
in families Shintani lifts of classical forms in Hida families, as the following theorem shows.

\begin{theorem}\label{MainFamilies}
Let $\kappa\in\mathcal{X}^\mathrm{arith}(\mathcal{R})$ 
be an arithmetic point of signature $(\chi^2,2k)$ 
and let $\tilde\kappa\in\widetilde{\mathcal{X}}^\mathrm{arith}(\mathcal{R})$ be an arithmetic point of signature $(\chi,k)$ be 
such that $\pi(\tilde\kappa)=\kappa$. Let $\chi=\omega^{k-1}\psi$ be the character of $\tilde\kappa$, 
so that $\chi^2$ is the character of $\kappa$, and assume that the conductor of 
$\chi$ is $p^s$ for some integer $s\geq 1$. 
Then 
\[\mathbb{S}_{D_0,r_0}(\tilde\kappa)=\lambda(\kappa)\cdot\mathcal{S}_{D_0,r_0}^{(\chi)}(f_\kappa)|\mathrm{T}_J({p})^{1-s}.\]
\end{theorem}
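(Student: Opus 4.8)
The plan is to compute the specialization $\mathbb{S}_{D_0,r_0}(\tilde\kappa)$ coefficient by coefficient and match it with the Fourier--Jacobi expansion of $\mathcal{S}_{D_0,r_0}^{(\chi)}(f_\kappa)|\mathrm{T}_J(p)^{1-s}$ given by Equation \eqref{Shintani} and the explicit action of $\mathrm{U}_J(p)$ recalled in \S\ref{sec-theta}. First I would fix $\kappa$ and $\tilde\kappa$ as in the statement, apply $\tilde\kappa$ to the defining formula \eqref{familycoeff} for $c_{n,r}$, and use the characterizing property of $j_{Q,k_0}$ from Lemma \ref{lemmastevens}: for each $Q=[a,b,c]\in\mathcal{P}_{D_0,r_0}^{(D,r)}(N,p)$ with $p\nmid c$ (so $p\mid b$ holds, and after the translation-trick one may assume $p^s\mid a,b$), one gets
\[
\tilde\kappa\bigl(j_{Q,k_0}(\Phi_{\kappa_0}(D_Q))\bigr)=\chi(Q)\cdot\rho_\kappa\bigl(\Phi_{\kappa_0}(D_Q)\bigr)\bigl(Q^{k-1}\bigr)=\chi(Q)\cdot\lambda(\kappa)\cdot I_{f_\kappa}^-\{r_Q\to s_Q\}(Q^{k-1}),
\]
where the last equality is \eqref{GS}. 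Hence $c_{n,r}(\tilde\kappa)=\lambda(\kappa)\sum_{Q\in\mathcal{P}_{D_0,r_0}^{(D,r)}(N,p)}\chi(Q)\chi_{D_0}(Q)\,I_{f_\kappa}^-\{r_Q\to s_Q\}(Q^{k-1})$.

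The next step is the bookkeeping that turns a sum over $\mathcal{P}_{D_0,r_0}^{(D,r)}(N,p)$ (forms of level $p$, discriminant-condition twisted by $p^2$, with $p\nmid c$) into the sum over $\mathcal{F}_{D_0,r_0}^{(D,r)}(N,p^s)$ appearing in the classical formula \eqref{Shintani} for the weight-$(k+1)$ level-$p^s$ theta lift. Here I would invoke Lemma \ref{rep3} — the bijection $\mathcal{P}_{D_0,r_0}^{(D,r)}(N,p^s)\xrightarrow{\sim}\mathcal{P}_{D_0,r_0}^{(Dp^{2(s-1)},rp^{s-1})}(N,p)$ — to relate the two index sets, and then account for the difference between $\mathcal{P}$ (forms with $p\nmid c$) and the full $\mathcal{F}$. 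The forms in $\mathcal{F}$ with $p\mid c$ are exactly the ``oldform'' part, and stripping them off is precisely what applying a power of $\mathrm{U}_J(p)$ does: using the recalled rule that the $(n,r)$-coefficient of $F|\mathrm{U}_J(p)$ is $c(p^2n,pr)$, one checks that $\mathrm{U}_J(p)^{s-1}$ applied to $\mathcal{S}_{D_0,r_0}^{(\chi)}(f_\kappa)$ reindexes $(D,r)\mapsto(Dp^{2(s-1)},rp^{s-1})$ and, combined with the $p$-ordinarity of $f_\kappa$ (so the relevant eigenvalue is a unit and $\mathrm{U}_J(p)$ is invertible on the ordinary part, legitimizing the exponent $1-s$), produces exactly the restriction to $p\nmid c$ forms. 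Matching the power of $p$ and tracking that $\chi_{D_0}^{(N,DD_0p^2,-rr_0p)}$ agrees with the genus character used classically under the bijection of Lemma \ref{rep3} completes the identification.

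The main obstacle I expect is the precise matching of the genus character and of the various powers of $p$ and $T=p$ across the three formulas involved: the definition \eqref{familycoeff} (with its $p^2$-twisted discriminant and its $\chi_{D_0}^{(N,DD_0p^2,-rr_0p)}$), the classical Shintani formula \eqref{Shintani} at level $p^s$, and the action of $\mathrm{U}_J(p)$. One must verify that $\chi_{D_0}(Q)$ computed with the level-$p$ data equals $\chi_{D_0}(Q)$ computed with the level-$p^s$ data for a form $Q$ lying in the image of the bijection, which follows from the characterization of the generalized genus character recalled after \cite[Prop.~1]{GKZ} (it depends only on values represented by $[a/S,b,cT]$, unaffected by the $p$-power rescaling as long as coprimality to $D_0$ is preserved), but the accounting is delicate. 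A secondary point is justifying that the sum defining $c_{n,r}$ is finite and that $j_{Q,k_0}$ extends to $\D_{\mathcal{R}_{\kappa_0}}$ as asserted, so that $\Phi_{\kappa_0}(D_Q)\in\D_{\mathcal{R}_{\kappa_0}}$ may legitimately be fed into it; this is routine given Lemma \ref{lemmastevens} but should be stated. Once these compatibilities are in place, the theorem follows by comparing Fourier--Jacobi coefficients of both sides, both now expressed as $\lambda(\kappa)$ times the same finite sum over $\mathcal{P}_{D_0,r_0}^{(D,r)}(N,p)$.
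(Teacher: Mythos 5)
Your skeleton --- evaluate the coefficients via Lemma \ref{lemmastevens} and Equation \eqref{GS}, match index sets through Lemma \ref{rep3}, and invert a power of $\mathrm{U}_J(p)$ using ordinarity --- is indeed the paper's strategy, but two of your steps fail as stated. The more serious one is your opening formula for $c_{n,r}(\tilde\kappa)$. Lemma \ref{lemmastevens} characterizes $\tilde\kappa\bigl(j_{Q,k_0}(\mu)\bigr)$ only for forms $[a,b,c]$ with $p^s\mid a$ and $p^s\mid b$, whereas a class in $\mathcal{P}_{D_0,r_0}^{(D,r)}(N,p)$ has discriminant $p^2D_0D$: if $p\nmid D_0D$ and $s\geq 3$, no $\Gamma_0(Np)$-translate can satisfy $p^s\mid a$ and $p^s\mid b$, since that would force $p^s\mid b^2-4ac=p^2D_0D$. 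So ``after the translation-trick one may assume $p^s\mid a,b$'' is false, and the asserted identity $c_{n,r}(\tilde\kappa)=\lambda(\kappa)\sum_{Q\in\mathcal{P}_{D_0,r_0}^{(D,r)}(N,p)}\chi(Q)\chi_{D_0}(Q)\,I^-_{f_\kappa}\{r_Q\to s_Q\}(Q^{k-1})$ cannot hold for $s>1$; it is even inconsistent with the theorem, whose right-hand side gives $c_{n,r}(\tilde\kappa)=\lambda(\kappa)a_p(\kappa)^{1-s}c_{f_\kappa}(n,r)$ with $c_{f_\kappa}(n,r)$ a sum over forms of discriminant $p^{2s}D_0D$. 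The paper avoids this by first applying the formal operator $\mathrm{T}_J^{s-1}(p)$ to $\mathbb{S}_{D_0,r_0}(\tilde\kappa)$, so that the coefficients actually evaluated are $c_{p^{2(s-1)}n,p^{s-1}r}(\tilde\kappa)$, indexed by $\mathcal{P}_{D_0,r_0}^{(Dp^{2(s-1)},rp^{s-1})}(N,p)$; Lemma \ref{rep3} identifies this set with $\mathcal{P}_{D_0,r_0}^{(D,r)}(N,p^s)$, whose representatives do satisfy $p^{2s}\mid a$, $p^s\mid b$, $p\nmid c$, so Lemma \ref{lemmastevens} and \eqref{GS} legitimately apply there. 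Only at the very end is $\mathrm{T}_J(p)^{1-s}$ introduced, using that $\mathrm{T}_J(p)$ acts on $\mathcal{S}_{D_0,r_0}^{(\chi)}(f_\kappa)$ as the $p$-adic unit $a_p(\kappa)$.

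Second, your mechanism for reconciling $\mathcal{P}$ with the full set $\mathcal{F}_{D_0,r_0}^{(D,r)}(N,p^s)$ in \eqref{Shintani} is wrong: the power of $\mathrm{U}_J(p)$ does not ``strip off'' the forms with $p\mid c$ (on the eigenform it is just multiplication by $a_p(\kappa)$). What kills those terms is the elementary observation that $\chi(Q)=\chi(c)=0$ whenever $p\mid c$, because $\chi$ has conductor $p^s$ with $s\geq 1$; hence the sum in \eqref{Shintani} is already supported on $\mathcal{P}_{D_0,r_0}^{(D,r)}(N,p^s)$. This is also precisely where the hypothesis that $\chi$ is nontrivial modulo $p$ enters --- the trivial-character case requires the separate argument of Theorem \ref{MainFamiliesTrCh}. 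With these two corrections your outline becomes the paper's proof.
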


\begin{proof} 
We first observe that, 
combining Lemma \ref{lemmastevens} and Equation \eqref{GS}, for all $Q\in \mathcal{P}_{D_0,r_0}^{(D,r)}(N,p^s)$ 
we have 
\[ \tilde\kappa\left(j_{Q,k_0}\left(\Phi_{\kappa_0}(D_Q)\right)\right)=
\chi(Q)\cdot\Phi_{\kappa_0}(D_Q)(k)\left(Q^{k-1}\right)=
\lambda(k)\cdot \chi(Q)\cdot I^-_{f_{\kappa}}\{r_Q\rightarrow s_Q\}\left(Q^{k-1}\right).
\] 

Next, note that if the quadratic form $Q=[a,b,c]$
has $c=0$, then $\chi(Q)=0$ and 
the summand corresponding to $Q$ does not appear in 
Equation \eqref{Shintani}. Therefore, the sum in Equation \eqref{Shintani} apparently over $\mathcal{F}_{D_0,r_0}^{(D,r)}({N},{p^s})$
is really only over
$\mathcal{P}_{D_0,r_0}^{(D,r)}(N,p^s)$. 

We now prove the theorem. If $s=1$, the term $\mathrm{T}_J({p})^{1-s}$ disappears so this is an immediate consequence of the previous 
two observations, so let us assume that $s>1$. 

For
$F=\sum_{n,r}a_{n,r}q^n\zeta^r\in A[\![q,\zeta]\!]$, where $A$ is any 
ring, and $m\geq0$ an integer, define the formal power series 
\[(F|\mathrm{T}_J^m(p))(q,\zeta)=\sum_{n,r}a_{p^mn,p^mr}q^n\zeta^r.\] The 
formulas recalled in Section \ref{sec-theta} show that if $F$ is a Jacobi form
in $J^\mathrm{cusp}_{k+1,N}(p^s,\chi)$, then the operator just defined 
coincides with the Hecke operator $\mathrm{T}_J^m(p)$, justifying the abuse of notation.  
We have 
\[\mathbb{S}_{D_0,r_0}(\tilde\kappa)|\mathrm{T}_J^{s-1}(p)=\lambda(\kappa)\cdot
\sum_{Q\in\mathcal{P}_{D_0,r_0}^{(Dp^{2(s-1)},rp^{s-1})}(N,p)}\chi(Q)\cdot\chi_{D_0}(Q)\cdot I^-_{f_{\kappa}}\{r_Q\rightarrow s_Q\}\left(Q^{k-1}\right).\]
Lemma \ref{rep3} shows that there exists a common set of representatives for 
$\mathcal{P}_{D_0,r_0}^{(Dp^{2(s-1)},rp^{s-1})}(N,p)$ and 
$\mathcal{P}_{D_0,r_0}^{(D,r)}(N,p^s)$, and therefore the sum in the right hand side 
of the last displayed equality is 
$\lambda(k)\cdot\mathcal{S}_{D_0,r_0}^{(\chi)}(f_\kappa)$.
It might be also useful to notice that we may alternatively view $\chi_{D_0}$ 
as a function on $\mathcal{P}_{D_0,r_0}^{(Dp^{2(s-1)},rp^{s-1})}(N,p)$ or 
$\mathcal{P}_{D_0,r_0}^{(D,r)}(N,p^s)$, since in both cases 
$\chi_{D_0}$ is defined as $\chi_{D_0}^{(N,D_0Dp^{2s},-r_0rp^s)}$. 
We therefore get the equality 
of power series
\[\mathbb{S}_{D_0,r_0}(\tilde\kappa)|\mathrm{T}_J^{s-1}(p)=\lambda(\kappa)\cdot\mathcal{S}_{D_0,r_0}^{(\chi)}(f_\kappa).\] 
The Hecke operator $\mathrm{T}_J(p)$ acts on $\mathcal{S}_{D_0,r_0}^{(\chi)}(f_\kappa)$ 
as multiplication by the $p$-adic unit $a_p(\kappa)$, and the theorem follows then 
applying $\mathrm{T}_J^{1-s}(p)$ to the above formula.
\end{proof}

We now study the specialization of the family of Jacobi forms 
$\mathbb{S}_{D_0,r_0}$ to arithmetic points with trivial character.

\begin{theorem}\label{MainFamiliesTrCh} Let $\kappa\in\mathcal{X}^\mathrm{arith}(\mathcal{R})$ 
be an arithmetic point of signature $(\mathbf{1},2k)$, 
let $(D_0,r_0)$ be a fundamental index $N$ pair such that $p\nmid D_0$. 
Let $\tilde\kappa$ be 
an arithmetic point of $\tilde{\mathcal{X}}^\mathrm{arith}(\mathcal{R})$ 
such that $\pi(\tilde\kappa)=\kappa$. 
Then, for any index $N$ pair $(D,r)$ such that $p\nmid D$, satisfying the relation $D=r^2-4Nn$ for an integer $n$, 
we have 
\[c_{n,r}(\tilde\kappa)=\lambda(\kappa)\cdot
 c_{f_\kappa}(n,r),\] where $c_{f_\kappa}(n,r)$ is the 
Fourier-Jacobi coefficients of $\mathcal{S}_{D_0,r_0}^{(\mathbf{1})}(f_\kappa)$.
\end{theorem}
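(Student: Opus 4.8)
The plan is to follow the same strategy as the proof of Theorem \ref{MainFamilies}, but to replace the primitive-character theta lift with the trivial-character version, which by definition involves the difference of two sums of modular-symbol values (the formula recalled in \S\ref{sec-theta} with the extra $T^{k-1}$-weighted first term and the correction term indexed by $\mathcal{F}_{D_0,r_0}^{(T^2D,Tr)}(ST,1)$, here with $T=p$, $S=N$, $ST = Np$). First I would observe that since the character is trivial, $\chi(Q)=1$ for every quadratic form $Q$, so the vanishing argument "$c=0\Rightarrow\chi(Q)=0$" that let us pass from $\mathcal{F}$ to $\mathcal{P}$ in the previous proof is \emph{not} available; instead I expect one must work directly with $\mathcal{F}_{D_0,r_0}^{(D,r)}(N,p^s)$ (equivalently, after applying $\mathrm{T}_J(p)$-powers, with $\mathcal{F}_{D_0,r_0}^{(Dp^{2(s-1)},rp^{s-1})}(N,p)$ and the auxiliary level-$Np$ set), and the hypotheses $p\nmid D_0$ and $p\nmid D$ are precisely what is needed to control which forms in these sets actually contribute.

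The key steps, in order, would be: (1) fix $\tilde\kappa$ over $\kappa$ of signature $(\mathbf{1},2k)$ and record, via Lemma \ref{lemmastevens} together with Equation \eqref{GS}, the specialization formula $\tilde\kappa(j_{Q,k_0}(\Phi_{\kappa_0}(D_Q))) = \lambda(\kappa)\cdot I^-_{f_\kappa}\{r_Q\to s_Q\}(Q^{k-1})$ valid for $Q\in\mathcal{P}_{D_0,r_0}^{(D,r)}(N,p^s)$ --- note $\chi(Q)=1$ now; (2) under the hypotheses $p\nmid D_0$, $p\nmid D$, analyze the set $\mathcal{F}_{D_0,r_0}^{(D,r)}(N,p^s)$: since the discriminant $\delta_Q = p^{2s}D_0D$ has $p$-adic valuation exactly $2s$, a form $Q=[a,b,c]$ in this set with $p\mid c$ would force (using $p^{2s}\mid a$ and $p^{2s}\mid b^2-4ac$) a contradiction on $\ord_p(\delta_Q)$ --- so in fact $p\nmid c$ automatically and $\mathcal{F}_{D_0,r_0}^{(D,r)}(N,p^s)=\mathcal{P}_{D_0,r_0}^{(D,r)}(N,p^s)$, and similarly the correction-term set collapses or contributes a controlled amount; (3) apply $\mathrm{T}_J^{s-1}(p)$ to $\mathbb{S}_{D_0,r_0}(\tilde\kappa)$ and use Lemma \ref{rep3} to match representatives with the level-$Np$ set, exactly as before, obtaining $\mathbb{S}_{D_0,r_0}(\tilde\kappa)|\mathrm{T}_J^{s-1}(p) = \lambda(\kappa)\cdot\tilde{\mathcal{S}}$-type sum; (4) identify that sum, using the trivial-character definition of $\mathcal{S}_{D_0,r_0}^{(\mathbf1)}(f_\kappa)$ from \S\ref{sec-theta} and the normalization by $\Omega_f^-$, and read off the coefficient-wise identity $c_{n,r}(\tilde\kappa)=\lambda(\kappa)\cdot c_{f_\kappa}(n,r)$ for those $(n,r)$ with $p\nmid D$; the $\mathrm{T}_J(p)$-factor is absorbed because $\mathrm{T}_J(p)$ acts on the lift by the $p$-adic unit $a_p(\kappa)$.

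The main obstacle I anticipate is step (2): making precise the interaction between the genus character $\chi_{D_0}$ (with its parameters $(N, D_0Dp^{2s}, -r_0rp^s)$, respectively $(Np, D_0Dp^{2}, \cdots)$ for the auxiliary set) and the $p\nmid D_0$, $p\nmid D$ hypotheses, and in particular showing that the two-term definition of the trivial-character theta lift, when restricted to the coefficients $(n,r)$ with $p\nmid D$, reduces to a single sum over $\mathcal{P}_{D_0,r_0}^{(D,r)}(N,p^s)$ matching $\mathbb{S}_{D_0,r_0}(\tilde\kappa)$ after the $\mathrm{T}_J(p)$-normalization. One should check carefully that the second (correction) sum in the trivial-character formula, indexed by $\mathcal{F}_{D_0,r_0}^{(p^2D,pr)}(Np,1)$, does not contribute new primitive classes outside the image of the map in Lemma \ref{rep3} when $p\nmid D$, or else accounts precisely for the $\left(\frac{D_0}{p}\right)$ Euler-factor discrepancy; this is where the bulk of the bookkeeping lies, and it is essentially a $p$-adic valuation and representability computation on binary quadratic forms, parallel to Lemma \ref{rep3} but one level deeper. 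Everything else is a transcription of the argument for Theorem \ref{MainFamilies}.
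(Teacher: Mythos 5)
Your overall frame (specialize via Lemma \ref{lemmastevens} and Equation \eqref{GS}, then match the resulting sum of modular-symbol values against the two-term trivial-character definition of the lift) is the right one, but your step (2) contains a concrete error and it is exactly at the point where the real content of the proof lives. It is \emph{not} true that $p\nmid D_0D$ forces $p\nmid c$ for every $Q=[a,b,c]$ in $\mathcal{F}_{D_0,r_0}^{(D,r)}(N,p)$: if $p\mid c$ then $p^{2}\mid a$ and $p^{2}\mid b^2-4ac$ give $p\mid b$, so $Q$ is divisible by $p$ as a form, and such non-primitive forms do exist (take $Q=p\cdot Q'$ with $Q'$ of discriminant $D_0D$ and $Np\mid$ its first coefficient); there is no contradiction on $\ord_p(\delta_Q)$, since one simply gets $\ord_p(b)=1$ exactly. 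The correct statement, and the one the paper uses, is the disjoint decomposition $\mathcal{F}_{D_0,r_0}^{(D,r)}(N,p)=\mathcal{P}_{D_0,r_0}^{(D,r)}(N,p)\sqcup\mathcal{N}_{D_0,r_0}^{(D,r)}(N,p)$ into $p$-primitive and non-primitive classes. The non-primitive part is then rewritten via $Q\mapsto Q/p$, using $\chi_{D_0}(p\cdot Q')=\left(\frac{D_0}{p}\right)\chi_{D_0}(Q')$, $r_{pQ'}=r_{Q'}$, $s_{pQ'}=s_{Q'}$ and $(pQ')^{k-1}=p^{k-1}(Q')^{k-1}$, and the image set $\{Q/p\}$ is identified with $\mathcal{F}_{D_0,r_0}^{(D,r)}(Np,1)$ --- which is precisely the index set of the correction term in the trivial-character definition of $\tilde{I}_f(D,r)$, with matching factor $p^{k-1}\left(\frac{D_0}{p}\right)$. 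You flag this as the alternative (``or else accounts precisely for the $\left(\frac{D_0}{p}\right)$ Euler-factor discrepancy'') and defer it as bookkeeping, but your primary claim (that $\mathcal{F}=\mathcal{P}$ and the correction set ``collapses'') is the wrong branch, and the deferred computation is the whole proof.

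Two smaller points. First, since the character of $\kappa$ is trivial, its conductor is $1$, so the wild level is $s=1$ and the entire $\mathrm{T}_J^{s-1}(p)$ / Lemma \ref{rep3} apparatus from Theorem \ref{MainFamilies} is not needed here; your steps (3)--(4) are vacuous in this situation, and no $a_p(\kappa)$ renormalization occurs (consistently, the statement of Theorem \ref{MainFamiliesTrCh} has no $\mathrm{T}_J(p)^{1-s}$ factor). Second, the paper first disposes of the case where $\tilde\kappa$ itself has a non-trivial (necessarily quadratic) character $\chi$ with $\chi^2=\mathbf{1}$, where $\chi(Q)=0$ kills the non-primitive forms and the result is immediate; your proposal does not distinguish the two lifts of $\kappa$ to $\tilde{\mathcal{X}}(\mathcal{R})$.
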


\begin{proof}
We first note that the result is clear if $\tilde\kappa$ has signature $(\chi,k)$ with 
$\chi\neq\mathbf{1}$, since then $\chi(Q)=0$ for all non-primitive forms. 
So assume that $\chi = {\mathbf{1}}$. 
As in the proof of Theorem \ref{MainFamilies}, combining Lemma \ref{lemmastevens} and Equation \eqref{GS} 
we get  
\[c_{n,r}(\tilde\kappa)= \lambda(\kappa)\cdot\sum_{Q\in\mathcal{P}_{D_0,r_0}^{(D,r)}(N,p)} \chi_{D_0}(Q)\cdot  I^-_{f_{\kappa}}\{r_Q\rightarrow s_Q\}\left(Q^{k-1}\right).\]
Since $p\nmid D_0D$, the 
set $\mathcal{F}_{D_0,r_0}^{(D,r)}(N,p)$ is the disjoint union of the two sets 
$\mathcal{P}_{D_0,r_0}^{(D,r)}(N,p)$ and the subset $\mathcal{N}_{D_0,r_0}^{(D,r)}(N,p)$ 
of  $\mathcal{F}_{D_0,r_0}^{(D,r)}(N,p)$ consisting of non-primitive forms.
Therefore, 
\[
\begin{split}
&\sum_{Q\in\mathcal{P}_{D_0,r_0}^{(D,r)}(N,p)}
\chi_{D_0}(Q)\cdot  I^-_{f_{\kappa}}\{r_{Q}\rightarrow s_{Q}\}\left(Q^{k-1}\right)=\\
& = \sum_{Q\in \mathcal{F}_{D_0,r_0}^{(D,r)}(N,p)}
\chi_{D_0}(Q)\cdot  I^-_{f_{\kappa}}\{r_{Q}\rightarrow s_{Q}\}\left(Q^{k-1}\right) - 
\sum_{Q\in \mathcal{N}_{D_0,r_0}^{(D,r)}(N,p)}
\chi_{D_0}(Q)\cdot  I^-_{f_{\kappa}}\{r_{Q}\rightarrow s_{Q}\}\left(Q^{k-1}\right).
\end{split}\]

Since $\chi_{D_0}(p\cdot Q)=\left(\frac{D_0}{p}\right)\cdot\chi_{D_0}(Q)$ if $Q$ is not primitive, and $r_{p\cdot Q}=r_Q$ and $s_{p\cdot Q}=s_Q$, the second
summand of the last displayed formula is equal to 
\[p^{k-1}\left(\frac{D_0}{p}\right)\cdot\sum_{Q\in\mathcal{S}_{D_0,r_0}^{(D,r)}(N,p)}
\chi_{D_0}(Q)\cdot  I^-_{f_{\kappa}}\{r_{Q}\rightarrow s_{Q}\}\left(Q^{k-1}\right).\]
where $\mathcal{S}_{D_0,r_0}^{(D,r)}(N,p)=\{Q/p:Q\in\mathcal{N}_{D_0,r_0}^{(D,r)}(N,p)\}$.
The set $\mathcal{S}_{D_0,r_0}^{(D,r)}(N,p)$ consists then of forms $[a,b,c]$ 
modulo $\Gamma_0(Np)$ 
satisfying the three conditions $b^2-4ac=D_0D$, $b\equiv -r_0r\pmod{N}$ and $a\equiv0\pmod{Np}$, 
and therefore this is $\mathcal{F}_{D_0,r_0}^{(D,r)}(Np,1)$.
Comparing with the 
definition of the theta correspondence in the case of trivial character gives the result. 
\end{proof}

We now relate the coefficients $c_{n,r}(\tilde\kappa)$ with the theta lift 
of oldforms appearing in the Hida families. More precisely, 
let $\kappa$ be an arithmetic point of even weight $2k>2$ and trivial character. 
The modular form $f_{\kappa}$ is then $p$-old, and there exists a newform 
$f_\kappa^\sharp$ of level $\Gamma_0(N)$ and trivial character 
whose $p$-stabilization is $f_{\kappa}$; in other words, we have the relation 
\[f_\kappa(z)=f_\kappa^\sharp(z)-\frac{p^{2k-1}}{a_p(\kappa)}f_\kappa^\sharp(pz).\]
If $k=2$ and the character is trivial, then either $f_\kappa$ is the 
$p$-stabilization of a form $f_\kappa^\sharp$ as above, or $f_\kappa$ 
is a newform of level $\Gamma_0(Np)$. 
We need a couple of technical lemmas on quadratic forms. 

\begin{lemma}\label{rep1}
Let $(D_0,r_0)$ be a fixed fundamental index $N$ pair. For any index $N$ pair $(D,r)$, there exists a system of representatives $\mathcal{R}_{D_0,r_0}^{(Dp^2,rp)}(N)$ for 
$\mathcal{P}_{D_0,r_0}^{(Dp^2,rp)}(N)$ consisting of forms $[a,b,c]$ 
with $p\nmid c$ and $p\mid b$. 
\end{lemma}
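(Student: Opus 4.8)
The plan is to use the same unipotent-conjugation trick as in the surjectivity part of Lemma~\ref{rep3}. I would start from an arbitrary quadratic form $Q=[a,b,c]$ representing a given class in $\mathcal{P}_{D_0,r_0}^{(Dp^2,rp)}(N)$; by the very definition of $\mathcal{P}$ one already has $p\nmid c$, so the only feature left to arrange is $p\mid b$. To do this I would act on $Q$ by $\gamma_t=\smallmat{1}{0}{tN}{1}\in\Gamma_0(N)$ for a suitable integer $t$, obtaining
\[Q|\gamma_t=[\,a+btN+ct^2N^2,\ b+2ctN,\ c\,].\]
Since $p\geq 5$, $p\nmid N$ and $p\nmid c$, the integer $2cN$ is a unit modulo $p$, so I can pick $t$ with $b+2ctN\equiv 0\pmod p$; the third coefficient is untouched, so $p\nmid c$ still holds.

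The remaining step is to check that $Q|\gamma_t$ still belongs to $\mathcal{P}_{D_0,r_0}^{(Dp^2,rp)}(N)$, and this is the only point that needs (minor) care. The discriminant is unchanged because $\det\gamma_t=1$; the middle coefficient changes by the multiple $2ctN$ of $2N$, so the congruence $b\equiv -r_0rp\pmod{2N}$ persists; and the leading coefficient changes by a multiple of $N$, so $a\equiv 0\pmod N$ persists. As $\gamma_t\in\Gamma_0(N)$, the form $Q|\gamma_t$ represents the same class as $Q$. Carrying out this replacement once for each $\Gamma_0(N)$-class produces the sought system of representatives $\mathcal{R}_{D_0,r_0}^{(Dp^2,rp)}(N)$, every member of which has $p\nmid c$ and $p\mid b$.

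I do not anticipate a genuine obstacle: the argument is elementary and essentially a repetition of one already performed in Lemma~\ref{rep3}. The only thing to keep in mind is that one should not attempt to also force $b\equiv 0\pmod{p^2}$ — only $p\mid b$ is asserted, and, as noted in the discussion preceding Lemma~\ref{rep3}, $p\mid b$ together with $p\nmid c$ already forces $p^2\mid a$, which is all that is needed later.
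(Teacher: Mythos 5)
Your argument is correct, and it is in fact a little more direct than the one in the paper. The paper proceeds in two steps: it first forces $p\mid b$ by acting with an upper-triangular unipotent $\smallmat{1}{\beta}{0}{1}$ (which, as one computes, simultaneously makes $p\mid c$ for the new representative), and then repairs the condition $p\nmid c$ by a second, non-unipotent substitution $\smallmat{p\beta}{\alpha}{N}{p}$ with $-N\alpha+p^2\beta=1$, checking along the way that the congruences defining $\mathcal{F}_{D_0,r_0}^{(Dp^2,rp)}(N)$ survive. You instead exploit from the outset that, by the definition of $\mathcal{P}$, each class already has a representative with $p\nmid c$, and then a single lower-triangular unipotent $\smallmat{1}{0}{tN}{1}\in\Gamma_0(N)$ fixes $c$ while translating $b$ by $2ctN$, which is a unit multiple of $t$ modulo $p$; this kills the second step entirely and keeps the verification of the congruences mod $2N$ and $N$ trivial. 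Both arguments are elementary and rest on the same unipotent-translation idea (the one also used in the surjectivity half of Lemma \ref{rep3}); yours buys a shorter, one-step proof, while the paper's version has the minor merit of showing how to reach the normal form even from a representative with $p\mid c$. Your closing remark is also accurate: $p\mid b$ and $p\nmid c$ together with $p^2\mid b^2-4ac$ force $p^2\mid a$, which is exactly what Lemma \ref{rep2} needs.
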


\begin{proof}
Fix a $p$-primitive form $Q=[a,b,c]$ representing a class in
$\mathcal{P}_{D_0,r_0}^{(D,r)}(N,p)$. We first claim 
that, up to a change of representatives, we can assume that $p\mid b$. 
To show this, note that if $p\nmid b$ then $p\nmid a$ because $p\mid=b^2-4ac$. 
Choose any integer $\beta$ such that $\beta\equiv -b/2a\pmod p$, 
and let $\gamma=\smallmat 1\beta01$. Then $Q|\beta=[a',n',c']$ 
with $p\mid c'$. So we may assume that the representative 
$Q=[a,b,c]$ is chosen so that $p\mid b$. 
If now $p\mid c$, then $p\mid a$, because otherwise 
$[a,b,c]$ is not $p$-primitive. Since $p\mid b$ and $p^2\mid b^2-4ac$, 
we see that $p^2\mid c$. Take two integers $\alpha$ and 
$\beta$ such that $-N\alpha+p^2\beta=1$ and consider the 
matrix $\gamma=\smallmat{p\beta}{\alpha}{N}{p}$. Then $Q|\gamma$ has 
the required property that $p\mid b$ and $p\nmid c$. 
\end{proof}

\begin{lemma}\label{rep2} Let $\mathcal{R}_{D_0,r_0}^{(Dp^2,rp)}(N)$ be fixed as in Lemma \ref{rep1}
and $(D_0,r_0)$ a fundamental index $N$ pair and $(D,r)$ an index $N$ pair. 
The canonical map $[a,b,c]\mapsto [a,b,c]$ induces 
a bijection  $\mathcal{P}_{D_0,r_0}^{(D,r)}(N,p)\rightarrow 
\mathcal{R}_{D_0,r_0}^{(Dp^{2},rp)}(N)$. 
\end{lemma}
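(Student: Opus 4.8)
The plan is to exhibit the asserted bijection as the composite of two maps: first the tautological map sending the $\Gamma_0(Np)$-equivalence class of a form to its $\Gamma_0(N)$-equivalence class, and then the identification of $\mathcal{P}_{D_0,r_0}^{(Dp^2,rp)}(N)$ with the system of representatives $\mathcal{R}_{D_0,r_0}^{(Dp^2,rp)}(N)$ supplied by Lemma \ref{rep1}. First I would record the bookkeeping: unwinding the definitions, a form $Q=[a,b,c]$ in $\mathcal{F}_{D_0,r_0}^{(D,r)}(N,p)$ and a form in $\mathcal{F}_{D_0,r_0}^{(Dp^2,rp)}(N)$ both have discriminant $p^2D_0D$ and satisfy $b\equiv -pr_0r\pmod{2N}$, and the genus character attached to either set is $\chi_{D_0}^{(N,D_0Dp^2,-r_0rp)}$; the only genuine differences are the congruences $Np^2\mid a$ versus $N\mid a$, and that one mods out by $\Gamma_0(Np)$ rather than $\Gamma_0(N)$. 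I would also note that every $Q\in\mathcal{F}_{D_0,r_0}^{(D,r)}(N,p)$ automatically has $p\mid b$, since $p^2\mid a$ and $p^2\mid b^2-4ac$.

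The key elementary observation is the reverse implication: a form $Q=[a,b,c]\in\mathcal{R}_{D_0,r_0}^{(Dp^2,rp)}(N)$ is chosen with $p\nmid c$ \emph{and} $p\mid b$ (Lemma \ref{rep1}), and then $b^2-4ac=D_0Dp^2$ together with $p\nmid 4c$ (as $p\geq 5$) forces $p^2\mid a$, hence $Np^2\mid a$. Thus $Q$ already lies in $\mathcal{F}_{D_0,r_0}^{(D,r)}(N,p)$ with $p\nmid c$, so its $\Gamma_0(Np)$-class lies in $\mathcal{P}_{D_0,r_0}^{(D,r)}(N,p)$; this gives surjectivity. For well-definedness of the first map I would pick, for a class in $\mathcal{P}_{D_0,r_0}^{(D,r)}(N,p)$, any representative $[a,b,c]$ with $p\nmid c$ (one exists by the definition of $\mathcal{P}$); since then $Np^2\mid a$ and $p\mid b$, the bookkeeping above shows $[a,b,c]\in\mathcal{F}_{D_0,r_0}^{(Dp^2,rp)}(N)$ with $p\nmid c$, so its $\Gamma_0(N)$-class lies in $\mathcal{P}_{D_0,r_0}^{(Dp^2,rp)}(N)$ and is represented by a unique element of $\mathcal{R}_{D_0,r_0}^{(Dp^2,rp)}(N)$, to which the class is sent; any two such representatives are $\Gamma_0(Np)$-equivalent, a fortiori $\Gamma_0(N)$-equivalent, so the assignment is independent of choices. (Distinct elements of $\mathcal{R}_{D_0,r_0}^{(Dp^2,rp)}(N)$ lie in distinct $\Gamma_0(N)$-classes, hence in distinct $\Gamma_0(Np)$-classes, which already gives injectivity of the \emph{second} map.)

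The only step needing a genuine argument is injectivity of the first map: if $Q_1=[a_1,b_1,c_1]$ and $Q_2=[a_2,b_2,c_2]$ are $p\nmid c$ representatives of classes in $\mathcal{P}_{D_0,r_0}^{(D,r)}(N,p)$ with $Q_1=Q_2|\gamma$ for some $\gamma=\smallmat{\alpha}{\beta}{\gamma'}{\delta}\in\Gamma_0(N)$, then one must conclude $\gamma\in\Gamma_0(Np)$. Here one uses that $p^2\mid a_i$, $p\mid b_i$ and $p\nmid c_i$: the coefficient of $x^2$ in $Q_1=Q_2|\gamma$ is $Q_2(\alpha,\gamma')=a_2\alpha^2+b_2\alpha\gamma'+c_2\gamma'^2\equiv c_2\gamma'^2\pmod p$, and since this coefficient equals $a_1$, divisible by $p$, while $p\nmid c_2$, we get $p\mid\gamma'$; combined with $N\mid\gamma'$ and $(N,p)=1$ this gives $Np\mid\gamma'$, i.e.\ $\gamma\in\Gamma_0(Np)$, so $Q_1$ and $Q_2$ already define the same class. (This is a variant of the congruence computation in the injectivity part of the proof of Lemma \ref{rep3}, but working only modulo $p$ so that no enumeration of coset representatives is needed.) Composing the resulting bijection $\mathcal{P}_{D_0,r_0}^{(D,r)}(N,p)\to\mathcal{P}_{D_0,r_0}^{(Dp^2,rp)}(N)$ with the identification $\mathcal{P}_{D_0,r_0}^{(Dp^2,rp)}(N)\cong\mathcal{R}_{D_0,r_0}^{(Dp^2,rp)}(N)$ of Lemma \ref{rep1} yields the claim. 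I do not expect a serious obstacle: the substance is the single observation that imposing $p\mid b$ on a form of discriminant $D_0Dp^2$ with $p\nmid c$ automatically yields $p^2\mid a$, together with the careful matching of the two index/discriminant conventions.
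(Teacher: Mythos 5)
Your proof is correct, and its surjectivity half is essentially the paper's: both reduce via Lemma \ref{rep1} to the single observation that a form $[a,b,c]$ of discriminant $D_0Dp^2$ with $p\mid b$ and $p\nmid c$ automatically satisfies $p^2\mid a$, hence lies in $\mathcal{F}_{D_0,r_0}^{(D,r)}(N,p)$. Where you genuinely diverge is the injectivity step. The paper fixes the matrices $\gamma_i=\smallmat{1}{0}{iN}{1}$, $0\le i<p$, as coset representatives for $\Gamma_0(Np)$ in $\Gamma_0(N)$, computes $a'=a+biN+ci^2N^2$, and rules out $i\neq 0$; you instead take an arbitrary $\gamma=\smallmat{\alpha}{\beta}{\gamma'}{\delta}\in\Gamma_0(N)$ connecting two $p\nmid c$ representatives and read off the new leading coefficient $Q(\alpha,\gamma')\equiv c\gamma'^2\pmod p$ to conclude $p\mid\gamma'$, hence $\gamma\in\Gamma_0(Np)$. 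Your version is not only shorter but also more robust: since $p\nmid N$, the index $[\Gamma_0(N):\Gamma_0(Np)]$ is $p+1$ rather than $p$, so the paper's list of representatives omits one coset (the one whose elements have $p\mid\delta$, hence $p\nmid\gamma'$); that coset is in fact excluded by exactly the leading-coefficient congruence you use, but the paper's enumeration does not address it, whereas your argument treats all of $\Gamma_0(N)$ uniformly and needs no enumeration at all. The remaining bookkeeping (matching of discriminants and of the congruence on $b$, well-definedness of the class map, and the identification with the representative set $\mathcal{R}_{D_0,r_0}^{(Dp^2,rp)}(N)$) agrees with what the paper leaves implicit in ``the map is clearly well defined.''
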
 

\begin{proof} 
The map is clearly well defined. We show that it is a bijection. We first show the surjectivity. It is clearly enough to show that each $p$-primitive 
form in the target comes from a (necessarily $p$-primitive) form in the source via the map 
in the statement. 
By Lemma \ref{rep1}
it is enough to show that each 
quadratic form $[a,b,c]$ with $p\nmid c$ and $p\mid b$, representing a class in $\mathcal{F}_{D_0,r_0}^{(Dp^{2},rp)}(N)$, also represents a class in $\mathcal{F}_{D_0,r_0}^{(D,r)}(N,p)$, namely, 
$p^{2}\mid a$. 
This is clear: the discriminant $b^2-4ac$ of the 
quadratic form $[a,b,c]$ is divisible by $p^{2}$, $b^2$ is divisible by $p^{2}$ and $p\nmid c$. 

We show the injectivity. Take two primitive quadratic forms
$[a,b,c]$ and $[a',b',c']$ representing 
classes in $\mathcal{F}_{D,0,r_0}^{(D,r)}(N,p)$ and assume they are $\Gamma_0(N)$-equivalent. 
Since these are $p$-primitive forms, $p\nmid cc'$.   
A set of representatives of $\Gamma_0(N)$ modulo $\Gamma_0(Np)$ 
is given by the matrices $\gamma_i=\smallmat10{iN}1$ for $i=0,\dots,p-1$. 
So, up to changing the representative $[a',b',c']$, there exists $0\leq i<p$
such that $[a',b',c']=[a,b,c]|\gamma_i$, and therefore $a'=a+biN+ci^2N^2$. 
Since $p^{2}\mid a$ and $p^{2}\mid a'$, 
we then see that  $p^{2}\mid iN(b+ciN)$. Suppose now $i\neq 0$. Then $p^{2}\mid b+ciN$ 
because $p\nmid N$. So $b\equiv -ciN\pmod{p^{2}}$. However,  
$p\mid b$ but $p\nmid Nc$, so we get $p\mid i$, which is not possible because $i<p$. 
This proves that $i=0$, and therefore $[a,b,c]$ and $[a',b',c']$ 
are $\Gamma_0(Np)$-equivalent, proving the injectivity of the map. 
\end{proof}

\begin{theorem}\label{new forms}
Assume that $f_\kappa$ is the $p$-stabilization of $f_\kappa^\sharp$, and 
let $(D_0,r_0)$ be a fundamental index $S$ pair. 
Let $\tilde\kappa$ be 
an arithmetic point of $\tilde{\mathcal{X}}^\mathrm{arith}(\mathcal{R})$ of signature $(\mathbf{1},k)$, 
such that $\pi(\tilde\kappa)=\kappa$, an arithmetic point in 
$\mathcal{X}^\mathrm{arith}(\mathcal{R})$ of signature $(\mathbf{1},2k)$. 
Then, for any index $N$ pair $(D,r)$ such that $p\nmid D$, satisfying the relation $D=r^2-4Nn$ for an integer $n$, 
 we have \[c_{n,r}(\tilde\kappa)= \lambda(\kappa)\cdot
a_p(\kappa)\left(1-
\frac{p^{k-1}\left(\left(\frac{D_0}{p}\right)+\left(\frac{D}{p}\right)\right)}{a_p(\kappa)}
-\frac{p^{2k-2}}{a_p^2(\kappa)}
\right)\cdot c_{f_\kappa^\sharp}(n,r),\]
where $c_{f_\kappa^\sharp}(n,r)$ is the 
Fourier-Jacobi coefficient of $\mathcal{S}_{D_0,r_0}^{(\mathbf{1})}(f_\kappa^\sharp)$. 

\end{theorem}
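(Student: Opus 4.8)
The plan is to peel off the $\Lambda$-adic layer, reducing to a purely classical identity between Fourier--Jacobi coefficients of theta lifts, and then to establish that identity from the $p$-stabilization formula for $f_\kappa$ together with the explicit quadratic-form description of the Shintani lift and the combinatorial Lemmas \ref{rep1}, \ref{rep2}, \ref{rep3} just proved.

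I would begin as in the proofs of Theorems \ref{MainFamilies} and \ref{MainFamiliesTrCh}: specializing the defining formula \eqref{familycoeff} via Lemma \ref{lemmastevens} and \eqref{GS}, and using that $\tilde\kappa$ has trivial character (so $\chi(Q)=1$ on the $p$-primitive forms appearing in \eqref{familycoeff}), gives
\[
c_{n,r}(\tilde\kappa)=\lambda(\kappa)\sum_{Q\in\mathcal P_{D_0,r_0}^{(D,r)}(N,p)}\chi_{D_0}(Q)\,I^-_{f_\kappa}\{r_Q\to s_Q\}(Q^{k-1})
\]
with $\chi_{D_0}=\chi_{D_0}^{(N,D_0Dp^2,-r_0rp)}$, so the problem becomes the classical one of evaluating the inner sum. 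Into it I would insert the $p$-stabilization: with the standard choice of periods $\Omega^{\pm}_{f_\kappa}:=\Omega^{\pm}_{f_\kappa^\sharp}$ and the relation $f_\kappa=f_\kappa^\sharp-\tfrac{p^{2k-1}}{a_p(\kappa)}f_\kappa^\sharp|V_p$ (with $(g|V_p)(z)=g(pz)$), linearity of $f\mapsto\tilde I_f$ gives $I^-_{f_\kappa}=I^-_{f_\kappa^\sharp}-\tfrac{p^{2k-1}}{a_p(\kappa)}I^-_{f_\kappa^\sharp|V_p}$. A change of variables in the defining integral yields $\tilde I_{g|V_p}\{r\to s\}(P)=p^{1-2k}\tilde I_g\{pr\to ps\}(P|\smallmat{1}{0}{0}{p})$; for $P=Q^{k-1}$ with $Q=[a,b,c]\in\mathcal P_{D_0,r_0}^{(D,r)}(N,p)$ (so $p^2\mid a$, $p\mid b$, $p\nmid c$) one computes $Q^{k-1}|\smallmat{1}{0}{0}{p}=[a,pb,p^2c]^{k-1}=p^{2(k-1)}[a/p^2,b/p,c]^{k-1}$, while $pr_Q,ps_Q$ are the endpoints of the oriented geodesic attached to $\hat Q:=[a/p^2,b/p,c]$, a form of discriminant $D_0D$. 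Hence the inner sum equals a sum over $\mathcal P_{D_0,r_0}^{(D,r)}(N,p)$ of $I^-_{f_\kappa^\sharp}$-values against the original forms $Q$ (discriminant $D_0Dp^2$), minus $\tfrac{p^{2k-2}}{a_p(\kappa)}$ times the analogous sum against the deflated forms $\hat Q$ (discriminant $D_0D$).

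The remaining task is to express each of these two sums as a multiple of $c_{f_\kappa^\sharp}(n,r)$. For the first, I would use Lemmas \ref{rep1} and \ref{rep2} to replace the sum over $\mathcal P_{D_0,r_0}^{(D,r)}(N,p)$ (taken modulo $\Gamma_0(Np)$) by the $p$-primitive part of the defining sum for the Fourier--Jacobi coefficient $c_{f_\kappa^\sharp}(p^2n,pr)$ (modulo $\Gamma_0(N)$), peel off the non-primitive part using $\chi_{D_0}(pQ)=\left(\tfrac{D_0}{p}\right)\chi_{D_0}(Q)$, $r_{pQ}=r_Q$, $s_{pQ}=s_Q$, $(pQ)^{k-1}=p^{k-1}Q^{k-1}$, and then rewrite $c_{f_\kappa^\sharp}(p^2n,pr)$ by the Hecke relation: since $p\nmid N$, $\mathrm{T}_J(p)$ acts on $\mathcal S^{(\mathbf 1)}_{D_0,r_0}(f_\kappa^\sharp)$ as multiplication by $a_p(f_\kappa^\sharp)=a_p(\kappa)+p^{2k-1}/a_p(\kappa)$, and in the weight $k+1$ of our lift the coefficient formula of \S\ref{sec-theta} reads $c^*(n,r)=c(p^2n,pr)+\left(\tfrac Dp\right)p^{k-1}c(n,r)$. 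This produces the Legendre symbols $\left(\tfrac{D_0}{p}\right)$ and $\left(\tfrac Dp\right)$. For the second sum, I would run the parallel argument relating $\hat Q$ to the forms of $\mathcal F_{D_0,r_0}^{(D,r)}(N)$; this is more delicate since the index $[\Gamma_0(N):\Gamma_0(Np)]=p+1$ now intervenes and the correspondence ceases to be bijective, but again Lemmas \ref{rep1}--\ref{rep3} and the Hecke relation reduce it to a scalar multiple of $c_{f_\kappa^\sharp}(n,r)$. Collecting the contributions and substituting $a_p(f_\kappa^\sharp)=a_p(\kappa)+p^{2k-1}/a_p(\kappa)$, the terms should collect into the asserted Euler-factor expression.

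The main difficulty lies in this last step. None of the ingredients is hard individually, but one is forced to juggle several distinct sets of integral binary quadratic forms that differ only in their level, in the divisibility imposed on the leading coefficient, in the congruence on the middle coefficient modulo $2N$, and in whether one works modulo $\Gamma_0(N)$ or $\Gamma_0(Np)$; one must check at every stage that the bijections of Lemmas \ref{rep1}--\ref{rep3} are compatible with the genus character $\chi_{D_0}^{(M,\Delta,\rho)}$ and with the attached oriented geodesic; and -- the genuinely subtle point -- one must verify that the non-primitive contributions, the multiplicities arising from $[\Gamma_0(N):\Gamma_0(Np)]$, and the various powers of $p$ (including the normalizing power of $T=p$ built into the trivial-character definition of the lift) all cancel so that only the claimed scalar multiple of $c_{f_\kappa^\sharp}(n,r)$ remains, with no extraneous terms. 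Since we assume $f_\kappa$ is a $p$-stabilization, the alternative $k=2$ case (where $f_\kappa$ might be a newform of level $\Gamma_0(Np)$) does not arise.
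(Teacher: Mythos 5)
Your overall strategy coincides with the paper's: specialize $c_{n,r}$ at $\tilde\kappa$ via Lemma \ref{lemmastevens} and Equation \eqref{GS}, insert the $p$-stabilization identity $f_\kappa(z)=f_\kappa^\sharp(z)-\frac{p^{2k-1}}{a_p(\kappa)}f_\kappa^\sharp(pz)$ into the modular-symbol values, perform the change of variables carrying $Q=[a,b,c]$ to $Q_p=[a/p^2,b/p,c]$, and evaluate the two resulting sums over $\mathcal{P}_{D_0,r_0}^{(D,r)}(N,p)$ using Lemmas \ref{rep1}--\ref{rep2}, the behaviour of $\chi_{D_0}$ on imprimitive forms, and the Hecke relation for $\mathrm{T}_J(p)$. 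Up to the analogue of Equation \eqref{p-stab} your plan reproduces the paper's proof essentially verbatim.

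The gap is that you stop exactly where the content is, and the quantities you leave unverified are the ones on which the Euler factor depends. The paper evaluates the second sum by showing that $Q\mapsto Q_p$ carries the system of representatives $\mathcal{R}_{D_0,r_0}^{(Dp^2,rp)}(N)$ \emph{bijectively} onto $\mathcal{F}_{D_0,r_0}^{(D,r)}(N)$ (inverse $[a,b,c]\mapsto[p^2a,pb,c]$; every form of discriminant $D_0D$ is $p$-primitive because $p^2\nmid D_0D$), so that this sum equals $c_{f_\kappa^\sharp}(n,r)$ with multiplicity one. You instead assert that the correspondence "ceases to be bijective" and that the index $[\Gamma_0(N):\Gamma_0(Np)]=p+1$ intervenes, and you defer the bookkeeping ("the terms should collect\dots"). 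These choices are not interchangeable: with your $\mathrm{T}_J(p)$-eigenvalue $a_p(f_\kappa^\sharp)=a_p(\kappa)+p^{2k-1}/a_p(\kappa)$ the stated formula forces the second sum to equal $(p+1)\,c_{f_\kappa^\sharp}(n,r)$, whereas with the paper's bookkeeping (eigenvalue written as $a_p(\kappa)$, bijective deflation map) it must equal $c_{f_\kappa^\sharp}(n,r)$. The two packages happen to produce the same final expression, but at most one fibre count can be correct, and you establish neither. Until you actually determine the multiplicity of the fibres of $Q\mapsto Q_p$ over $\mathcal{F}_{D_0,r_0}^{(D,r)}(N)$ --- this is the analogue of Lemma \ref{rep2} for the deflated forms and is not a formal consequence of Lemmas \ref{rep1}--\ref{rep3} --- and pin down which Hecke eigenvalue enters, the claimed identity is not proved.
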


\begin{proof}
 
As in the proof of Theorem \ref{MainFamilies}, combining Lemma \ref{lemmastevens} and Equation \eqref{GS} 
we get  
\[c_{n,r}(\tilde\kappa)= \lambda(\kappa)\cdot\sum_{Q\in\mathcal{P}_{D_0,r_0}^{(D,r)}(N,p)} \chi_{D_0}(Q)\cdot  I^-_{f_{\kappa}}\{r_Q\rightarrow s_Q\}\left(Q^{k-1}\right).\]
By definition, 
\[\int_{r_Q}^{s_Q}f_\kappa(z)Q(z,1)^{k-1}dz=
\int_{r_Q}^{s_Q}f_\kappa^\sharp(z)Q(z,1)^{k-1}dz-
\frac{p^{2k-1}}{a_p(\kappa)}\int_{r_Q}^{s_Q}f_\kappa^\sharp(pz)Q(z,1)^{k-1}dz.\]
Changing variable $z\mapsto z/p$, we have  
\[\int_{r_Q}^{s_Q}f_\kappa^\sharp(pz)Q(z,1)^{k-1}dz=
\frac{1}{p}\cdot\int_{r_{Q_p}}^{s_{Q_p}}f_\kappa^\sharp(z)Q_p(z,1)^{k-1}dz\]
where $Q_p=[a/p^2,b/p,c]$. We therefore obtain the equality 
\begin{equation}\label{p-stab}
I^-_{f_{\kappa}}\{r_Q\rightarrow s_Q\}\left(Q^{k-1}\right)= 
I^-_{f_{\kappa}^\sharp}\{r_Q\rightarrow s_Q\}\left(Q^{k-1}\right)-
\frac{p^{2k-2}}{a_p(\kappa)}I^-_{f_{\kappa}^\sharp}\{r_{Q_p}\rightarrow s_{Q_p}\}\left(Q_p^{k-1}\right).
\end{equation}
To compute $c_{n,r}(\tilde\kappa)$ we need to take the sum over all forms 
$Q\in\mathcal{P}_{D_0,r_0}^{(D,r)}(N,p)$ in Equation \eqref{p-stab}. 

We begin by computing the sum over all $Q\in\mathcal{P}_{D_0,r_0}^{(D,r)}(N,p)$ 
of the first summand in the right hand side of Equation \eqref{p-stab}. 
Suppose first that $p\nmid D_0$. Then $p\nmid D_0D$ and the  
set $\mathcal{F}_{D_0,r_0}^{(Dp^2,rp)}(N)$ is the disjoint union of the two sets 
$\mathcal{P}_{D_0,r_0}^{(Dp^2,rp)}(N)$ and $p\cdot\mathcal{P}_{D_0,r_0}^{(D,r)}(N)$. 
Therefore, identifying 
$\mathcal{P}_{D_0,r_0}^{(D,r)}(N,p)$ with $\mathcal{R}_{D_0,r_0}^{(Dp^2,rp)}(N)$ as above, we see that 
\[
\sum_{Q\in\mathcal{P}_{D_0,r_0}^{(D,r)}(N,p)}
\chi_{D_0}(Q)\cdot  I^-_{f_{\kappa}^\sharp}\{r_{Q}\rightarrow s_{Q}\}\left(Q^{k-1}\right)=\]
\[=\sum_{Q\in\mathcal{F}_{D_0,r_0}^{(Dp^2,rp)}(N)}\chi_{D_0}(Q)\cdot  I^-_{f_{\kappa}^\sharp}\{r_{Q}\rightarrow s_{Q}\}\left(Q^{k-1}\right)-
\sum_{Q\in p\cdot\mathcal{P}_{D_0,r_0}^{(D,r)}(N)}
\chi_{D_0}(Q)\cdot  I^-_{f_{\kappa}^\sharp}\{r_{Q}\rightarrow s_{Q}\}\left(Q^{k-1}\right).\]
Since $\chi_{D_0}(p\cdot Q)=\left(\frac{D_0}{p}\right)\cdot\chi_{D_0}(Q)$ if $Q$ is primitive, and $r_{p\cdot Q}=r_Q$ and $s_{p\cdot Q}=s_Q$, the second
summand of the last displayed formula is equal to 
$p^{k-1}\left(\frac{D_0}{p}\right)c_{f_\kappa^\sharp}(n,r)$, 
and we get 
\[\sum_{Q\in\mathcal{P}_{D_0,r_0}^{(D,r)}(N,p)}
\chi_{D_0}(Q)\cdot  I^-_{f_{\kappa}^\sharp}\{r_{Q}\rightarrow s_{Q}\}\left(Q^{k-1}\right)=
c_{f_\kappa^\sharp}(np^2,rp)-p^{k-1}\left(\frac{D_0}{p}\right)c_{f_\kappa^\sharp}(n,r).\]
If $p\mid D_0$, 
then the above formula still holds, since in this case $\chi_{D_0}(Q)=0$ when $Q$ 
is not $p$-primitive, and therefore the sum in Equation \eqref{p-stab}
reduces to $c_{f_\kappa^\sharp}(np^2,rp)$. 
Finally, using the formulas for the Hecke operator $\mathrm{T}_J(p)$ in Sec. \ref{sec-theta}, 
we obtain the relation 
\[a_p(\kappa)\cdot c_{f_\kappa^\sharp}(n,r)=
c_{f_\kappa^\sharp}(np^2,rp)+
\left(\frac{D}{p}\right)p^{k-1}c_{f_\kappa^\sharp}(n,r).\] 

We now compute the sum over all $Q\in  \mathcal{P}_{D_0,r_0}^{(D,r)}(N,p)$ 
of the second summand in Equation \eqref{p-stab}. 
View $Q\in \mathcal{P}_{D_0,r_0}^{(D,r)}(N,p)$ as an element 
of $\mathcal{R}_{D_0,r_0}^{(Dp^2,rp)}(N)$ using Lemma \ref{rep2}. 
For each such $Q$, the form $Q_p$ has discriminant $D_0D$, 
is $p$-primitive (because $p^2\nmid D_0D$), 
and $b\equiv r_0r\pmod{N}$, so it belongs to $\mathcal{P}_{D_0,r_0}^{(D,r)}(N)$. 
We claim that the subset $\mathcal{R}_{D_0,r_0}^{(D,r)}(N)=\{Q_p|Q\in\mathcal{P}_{D_0,r_0}^{(D,r)}(N,p)\}$ 
of $\mathcal{P}_{D_0,r_0}^{(D,r)}(N)$
thus obtained forms a system of representatives for $\mathcal{F}_{D_0,r_0}^{(D,r)}(N)$.
To show this, first note that $\mathcal{P}_{D_0,r_0}^{(D,r)}(N)=\mathcal{F}_{D_0,r_0}^{(D,r)}(N)$: 
since $p^2\nmid D_0D$, an integral quadratic form may have discriminant $D_0D$ only 
if it is $p$-primitive. Moreover, the map $[a,b,c]\mapsto [p^2a,pb,c]$ takes the set 
$\mathcal{P}_{D_0,r_0}^{(D,r)}(N)$ to $\mathcal{P}_{D_0,r_0}^{(Dp^2,rp)}(N)$, 
and is clearly an inverse of the map $[a,b,c]\mapsto [a/p^2,b/p,c]$ defined above, 
thus proving the claim. 
Since $\chi_{D_0}(Q)=\chi_{D_0}(Q_p)$, 
we have 
\[
\sum_{Q\in\mathcal{P}_{D_0,r_0}^{(D,r)}(N,p)}
\chi_{D_0}(Q_p)\cdot  I^-_{f_{\kappa}^\sharp}\{r_{Q_p}\rightarrow s_{Q_p}\}\left(Q_p^{k-1}\right) =
\sum_{Q\in\mathcal{F}_{D_0,r_0}^{(D,r)}(N)}
\chi_{D_0}(Q)\cdot  I^-_{f_{\kappa}^\sharp}\{r_{Q}\rightarrow s_{Q}\}\left(Q^{k-1}\right).\]
The term on the right hand side is $c_{f_\kappa^\sharp}(n,r)$. 

Putting everything together, we get the formula 
\[c_{n,r}(\tilde\kappa)= \lambda(k)\cdot
\left(
a_p(\kappa)\cdot c_{f_\kappa^\sharp}(n,r)-
p^{k-1}\left(\left(\frac{D_0}{p}\right)+\left(\frac{D}{p}\right)\right)c_{f_\kappa^\sharp}(n,r)
-\frac{p^{2k-2}}{a_p(\kappa)}c_{f_\kappa^\sharp}(n,r)
\right)\]
from which the result follows. 
\end{proof}

\begin{rmk} The results described in this section can be used to link Stark-Heegner points (\cite{Dar}, \cite{BD-SH}; see also \cite{LV-IMRN} and \cite{LMH})
to $p$-adic derivatives of Jacobi-Fourier coefficients of $p$-adic families of Jacobi forms, as done in \cite{DT} and \cite{LM} for $p$-adic families of half-integral weight modular forms. Details are left to the interested reader.\end{rmk}

\section{A pointer to \cite{LN-GKZ}} \label{final sec}
In \cite{LN-GKZ}, we consider the compatibility of the Big Heegner points construction of B. Howard of \cite{Ho2} with the formation of $\Lambda$-adic Jacobi forms, in the spirit of the classical Gross-Kohnen-Zagier theorem cf. \cite{GKZ} which links Heegner points (or more generally, Heegner cycles in higher weight) with the theta lift of a weight 2 modular form in the space of Jacobi forms. More precisely, 
choose a index $N$ pair $(D_0,r_0)$ such that $p$ is split in $D_0$; then the generating series naturally constructed from
Big Heegner points and the index $N$ pair $(D_0,n_0)$ 
is the formal series 
\[ \mathbb{H}_{D_0,r_0}=\sum_{n,r}\alpha_{n,r}q^n\zeta^r\] 
in $\mathcal{R}\pwseries{q,\zeta}$, 
where $\alpha_{n,r}$ are more precisely
defined as follows. Let $\Sel(\Q,\mathbb{T}^\dagger)$ be the 
Greenberg Selmer group of the self-dual twist $\T^\dagger$ 
of Hida's Big Galois representation $\T$. Under the assumptions of \cite{LN-GKZ}, this is an $\mathcal{R}$-module of rank $1$. Fix a generator $\mathfrak{Z}$ 
of the torsion-free quotient of $\Sel(\Q,\T^\dagger)$, and let $\mathcal{K}:=\mathrm{Frac}(\mathcal{R})$. The $\mathcal{K}$-vector 
space $\Sel_\mathcal{K}(\Q,\T^\dagger):=\Sel(\Q,\T^\dagger)\otimes_\mathcal{R}\mathcal{K}$ is one-dimensional, and therefore 
in $\Sel_\mathcal{K}(\Q,\T^\dagger)$ we may write 
$\mathfrak{Z}_{D,r}^\mathrm{How}=\alpha_{n,r}\cdot\mathfrak{Z}$, where $\mathfrak{Z}_{D,r}^\mathrm{How}\in\Sel(\Q,\T^\dagger)$ are Howard's Big Heegner points, and $\alpha_{n,r}$ are a priori elements in $\mathcal{K}$; moreover, 
by our choice of $\mathfrak{Z}$, we actually have that $\alpha_{n,r}\in\mathcal{R}$. 
{\it Loc. cit.} links the coefficients $\alpha_{n,r}$ explicitly with Fourier coefficients $c_{f_\kappa^\sharp}(n,r)$ associated to $f_{\kappa}^{\sharp}$ as in Theorem \ref{new forms}, at least in small connected neighborhoods of even weights (see \cite[Rmk 5.3]{LN-GKZ} for a discussion of our perspective over the whole weight space).
We briefly point out why the Fourier coefficients $c_{n,r}(\tilde\kappa)$ associated to $f_{\kappa}$ indeed vary $p$-adically, even though $f_{\kappa}^{\sharp}$ itself does not. Indeed, for $2k>2$, the term 
\[ \left(1-
\frac{p^{k-1}\left(\left(\frac{D_0}{p}\right)+\left(\frac{D}{p}\right)\right)}{a_p(\kappa)}
-\frac{p^{2k-2}}{a_p^2(\kappa)}
\right)\] of Theorem \ref{new forms} is always a $p$-adic unit. It then follows that the coefficients $c_{n,r}(\tilde\kappa)$ and $c_{f_\kappa^\sharp}(n,r)$ are equal, up to non-zero fudge factors not depending on $n$ and $r$. Upon dividing by the expression for $c_{n_0,r_0}(\tilde\kappa)$ on both sides, these fudge factors therefore entirely disappear. The results on $p$-adic variation contained in \cite{LN-GKZ} can therefore be expressed in terms of $c_{n,r}(\tilde\kappa)$ instead of $c_{f_\kappa^\sharp}(n,r)$, although in a slightly more restrictive way.

\bibliographystyle{amsalpha}
\bibliography{references}
\end{document}